\newtheorem{theorem}{Theorem}[section]
\newtheorem{proposition}[theorem]{Proposition}
\newtheorem{lemma}[theorem]{Lemma}
\newtheorem{definition}[theorem]{Definition}
\newtheorem{corollary}[theorem]{Corollary}
\newtheorem{remark}{Remark}[section]
\numberwithin{equation}{section}
\newcommand{\FF}{\mathbb{F}}
\newcommand{\ZZ}{\mathbb{Z}}
\newcommand{\NN}{\mathbb{N}}
\newcommand{\PP}{\mathbb{P}}
\newcommand{\bbB}{\mathbb{B}}
\newcommand{\cala}{\mathcal{A}}
\newcommand{\calp}{\mathcal{P}}
\newcommand{\calh}{\mathcal{H}}
\newcommand{\one}{\mathbf{1}}
\newcommand{\Span}{\hbox{\rm span}}
\title{Generalized polynomials and hyperplane functions in $(\mathbb{Z}/p^k\mathbb{Z})^n$} 
\author{Izabella {\L}aba and Charlotte Trainor}	
\date{\today}
\begin{document}

\begin{abstract}
For $p$ prime, let $\mathcal{H}^n$ be the linear span of characteristic functions of hyperplanes in $(\mathbb{Z}/p^k\mathbb{Z})^n$. We establish new upper bounds on the dimension of $\mathcal{H}^n$ over $\mathbb{Z}/p\mathbb{Z}$, or equivalently, on the rank of point-hyperplane incidence matrices in $(\mathbb{Z}/p^k\mathbb{Z})^n$ over $\mathbb{Z}/p\mathbb{Z}$. Our proof is based on a variant of the polynomial method using binomial coefficients in $\mathbb{Z}/p^k\mathbb{Z}$ as generalized polynomials. We also establish additional necessary conditions for a function on $(\mathbb{Z}/p^k\mathbb{Z})^n$ to be an element of $\mathcal{H}^n$.

\end{abstract}

\maketitle

\tableofcontents


\section{Introduction}


Let $p$ be a prime number, and let $k\in\NN$. We define $R:=\ZZ/p^k\ZZ$, the ring of integers modulo $p^k$, and use $R^\times$ to denote the multiplicative group of invertible elements of $R$.
For $x\in R^n$, we write $x=(x_1,\dots,x_n)$ in terms of coordinates. We also define the inner product on $R^n$ as the $R$-valued function $\langle x,y \rangle=x_1y_1+\dots +x_ny_n$.  

Recall that the projective space $\PP (\ZZ/p\ZZ)^{n-1}$ is defined as the quotient space $(\ZZ/p\ZZ)^n/\sim$, where $\sim$ is the equivalence relation
$$
b\sim b' \ \ \Leftrightarrow \ \ b=\lambda b' \hbox{ for some }\lambda\in (\ZZ/p\ZZ)\setminus\{0\}.
$$
When $k>1$, the projective space over $R^n$ must be defined a little bit more carefully. Define the $(n-1)$-dimensional sphere $\mathbb{S}^{n-1}(R)$ to be the set of all elements of $R$ that have at least one invertible component. In particular, $\mathbb{S}^{0}(R)=R^\times$. We then define
$$
\PP R ^{n-1}= \mathbb{S}^{n-1}(R)/\mathbb{S}^{0}(R).
$$
We will refer to the elements of $\PP R ^{n-1}$
as {\em nondegenerate directions} in $R^n$. Thus, two elements $b,b'$ of $\mathbb{S}^{n-1}(R)$ define the same direction if and only if 
\begin{equation}\label{equidirection}
b=\lambda b'\hbox{ for some }\lambda\in R^\times.
\end{equation}
This is how directions in $R^n$ are often defined in the literature, see e.g. \cite{HW}. All directions will be assumed to be nondegenerate unless explicitly stated otherwise.

A {\em hyperplane} is a set of the form 
$$H_b(a)=\{a\in R^n:\ \langle x-a,b\rangle =0 \},$$ 
for some $a\in R^n$ and a nondegenerate direction $b\in \PP R ^{n-1}$. (Note that the equality 
$\langle x-a,b\rangle =0$ should hold in $R$ and not just modulo $p$.) When $a=0$, we write
$H_b=H_b(0)$. We will sometimes refer to $H_b$ as {\em homogeneous hyperplanes}, and to $H_b(a)$ as {\em affine hyperplanes}. We also define
$$
\mathcal{H}^n=\hbox{\rm span}_{\ZZ/p\ZZ}\{{\bf 1}_{H_b(a)}:\ a\in R^n,b\in \PP R ^{n-1}\},
$$
considered as a set of functions from $R^n$ to $\ZZ/p\ZZ$.

\begin{definition}\label{def-hyperplane}
Let $R=\ZZ/p^k\ZZ$, where $p$ is a prime and $k\in\NN$. 

\begin{itemize}
\item[(i)]
The \emph{point-hyperplane incidence matrix} of \(R^n\) is the matrix 
\(W_{p^k,n}\), with rows and columns indexed by \(x\in R^n\), such that 
\begin{displaymath} (W_{p^k,n})_{x,y}=\left\{\begin{array}{ll}
1 & \text{if }\langle x,y\rangle=0, \\
0&\text{otherwise.}
\end{array}\right.
\end{displaymath}

\item[(ii)]
The \emph{reduced point-affine hyperplane incidence matrix} of \(R^n\) is the matrix 
\(\mathcal{A}^*_{p^k,n}\), with rows indexed by \((x,a)\in R^n\times R^n\) and columns indexed by $b\in \PP R^{n-1}$, such that
\begin{displaymath} (\cala^*_{p^k,n})_{(x,a),b}=\left\{\begin{array}{ll}
1 & \text{if }x\in H_b(a), \\
0&\text{otherwise.}
\end{array}\right.
\end{displaymath}

\item[(iii)]
The \emph{reduced point-hyperplane incidence matrix} of \(R^n\) is the matrix $W_{p^k,n}^*$ with rows indexed by $b\in \PP R^{n-1}$ and columns indexed by \(x\in R^n\), such that 
\begin{displaymath} (W_{p^k,n}^*)_{x,b}=\left\{\begin{array}{ll}
1 & \text{if }x\in H_b, \\
0&\text{otherwise.}
\end{array}\right.
\end{displaymath}

\end{itemize}

\end{definition}

Note that the equation $\langle x,y\rangle=0$ in (i) does not define a hyperplane in our sense if $y$ is not a direction; however, we use the terminology above for consistency with the existing literature such as \cite{DD}.

We are interested in upper and lower bounds on the rank of these matrices over $\ZZ/p\ZZ$.
For \(k=1\), the rank of \(W_{p,n}\) is known as a special case of the results in \cite{GD}, \cite{MM}, \cite{Smith}.

\begin{theorem}[\cite{GD}, \cite{MM}, \cite{Smith}]\label{k-is-1}
\label{WpnRank} For \(p\) prime and \(n\in\NN\), 
\[ \emph{rank}_{\ZZ/p\ZZ}(W_{p,n})={p+n-2\choose n-1}+1. \]
\end{theorem}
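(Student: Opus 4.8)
The plan is to identify the row space of $W_{p,n}$ over $\FF_p:=\ZZ/p\ZZ$ (the case $k=1$, so $R=\FF_p$) as an explicit space of low-degree polynomial functions, which simultaneously yields the upper and the lower bound. For $y\in\FF_p^n$ let $f_y\colon\FF_p^n\to\FF_p$ be the function $x\mapsto (W_{p,n})_{x,y}$. By Fermat's little theorem $\one[t=0]=1-t^{p-1}$ on $\FF_p$, so $f_y(x)=1-\langle x,y\rangle^{p-1}$, and expanding the $(p-1)$-st power by the multinomial theorem gives
\[
f_y(x)=1-\sum_{|\alpha|=p-1}\binom{p-1}{\alpha}\,y^\alpha\,x^\alpha ,
\]
where $\alpha$ ranges over exponent vectors in $\NN^n$ with $\alpha_1+\dots+\alpha_n=p-1$ (hence automatically $\alpha_i\le p-1$). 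Thus every row of $W_{p,n}$ lies in the $\FF_p$-span $V$ of the constant function $\one$ together with the monomials $x^\alpha$, $|\alpha|=p-1$. These monomials are reduced (all exponents $<p$), so they and $\one$ are linearly independent as functions on $\FF_p^n$; therefore $\dim_{\FF_p}V=\binom{p+n-2}{n-1}+1$, which gives $\Rank_{\ZZ/p\ZZ}(W_{p,n})\le\binom{p+n-2}{n-1}+1$.

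For the matching lower bound I would show $V$ is contained in the row space. Since $f_0=\one$, the constant function is a row, and hence $\langle x,y\rangle^{p-1}=\one-f_y$ lies in the row space for every $y$. It remains to prove that $\Span_{\FF_p}\{\langle x,y\rangle^{p-1}:y\in\FF_p^n\}$ equals the whole space $W:=\Span_{\FF_p}\{x^\alpha:|\alpha|=p-1\}$. In the basis $\{x^\alpha\}_{|\alpha|=p-1}$ of $W$, the vector $\langle x,y\rangle^{p-1}$ has coordinates $\bigl(\binom{p-1}{\alpha}y^\alpha\bigr)_\alpha$. Because $|\alpha|=p-1<p$, the multinomial coefficient $\binom{p-1}{\alpha}=(p-1)!/\prod_i\alpha_i!$ is a unit in $\FF_p$, so after a diagonal rescaling it suffices to show the vectors $(y^\alpha)_{|\alpha|=p-1}$, $y\in\FF_p^n$, span $\FF_p^{\binom{p+n-2}{n-1}}$. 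By row rank $=$ column rank this is equivalent to the linear independence, as functions on $\FF_p^n$, of the monomials $y\mapsto y^\alpha$ with $|\alpha|=p-1$ — which holds again because these are distinct reduced monomials. Hence $W\subseteq$ row space, so $V\subseteq$ row space, and $\Rank_{\ZZ/p\ZZ}(W_{p,n})=\binom{p+n-2}{n-1}+1$.

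The only real content is this last step, so that is where I expect the work to lie: one must ensure that the passage from "powers of linear forms" to "all homogeneous polynomials of degree $p-1$" does not degenerate over the small field $\FF_p$. The two arithmetic facts that save the day are both special to the boundary degree $d=p-1$: first, $\binom{p-1}{\alpha}\not\equiv 0\pmod p$ whenever $\sum_i\alpha_i=p-1$; and second, a monomial of total degree exactly $p-1$ automatically has all exponents $\le p-1$, so it is already in reduced form and the relevant families of monomials are linearly independent as $\FF_p$-valued functions. (For $d\ge p$ either statement can fail, which is why the clean closed form is special to $k=1$.) One minor point worth recording at the outset is that the rows indexed by $y$ and $\lambda y$ for $\lambda\in\FF_p^\times$ coincide and the row $y=0$ is the constant function, so the distinct rows are naturally indexed by $\PP\FF_p^{n-1}$ together with $\one$; this does not affect the rank but explains the shape of the answer. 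This argument is in the spirit of the polynomial method used throughout the paper, with ordinary monomials of degree $p-1$ playing the role that binomial coefficients in $\ZZ/p^k\ZZ$ will play for $k>1$.
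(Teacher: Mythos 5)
Your argument is correct, and it differs from the paper's in the key step. Both proofs begin with $\one_{H_y}(x)=1-\langle x,y\rangle^{p-1}$ and the observation that the answer counts reduced monomials of degree $p-1$ together with the constant. The real content in each case is showing that the $(p-1)$-st powers of linear forms span \emph{all} of $\calp^n_{=p-1}$. The paper does this through Lemma~\ref{lemma:spanHomPolys}, which is a statement for every degree $d\le p-1$, proved by induction on $n$: one peels off one variable, reduces to showing certain mixed products $\langle\tilde x,c\rangle^{d-j}x_n^j$ are in the span, and verifies this by inverting a system whose coefficient matrix has a Vandermonde-type determinant. You instead specialize to $d=p-1$ and argue globally: expand $\langle x,y\rangle^{p-1}$ by the multinomial theorem, note that each $\binom{p-1}{\alpha}$ is a unit mod $p$ (since $|\alpha|=p-1<p$, the $p$-adic valuation of the coefficient is zero), and then invoke "row rank $=$ column rank" for the matrix $(y^\alpha)_{y,\alpha}$ to transfer the problem to linear independence of reduced monomials in the dual variable $y$ — which is immediate. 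This is shorter and avoids induction entirely; it also works verbatim for any $d<p$, so it is in fact an alternative proof of Lemma~\ref{lemma:spanHomPolys} itself. What the paper's inductive Vandermonde argument buys is that it is the same template used again in Lemma~\ref{lemma:spanPolys} to handle affine hyperplanes, so the two lemmas are proved uniformly; your rank-duality trick is cleaner for the specific homogeneous statement but would need adaptation for the affine version. Both are perfectly valid routes to Theorem~\ref{k-is-1}.
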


Theorem \ref{k-is-1} can be deduced from a characterization of hyperplane functions in $\mathbb{F}_p^n$ in terms of polynomials. Specifically, when $k=1$, $\mathcal{H}^n$ is identical to $\mathbb{F}_p[x_1,\dots,x_n]$, the space of all polynomials in  $n$ variables of total degree at most $p-1$ over $\mathbb{F}_p$. Moreover, the subspace $\mathcal{H}^n_0$ spanned by homogeneous hyperplanes is identical to the linear span of all homogeneous polynomials in $\mathbb{F}_p[x_1,\dots,x_n]$ of degree exactly $p-1$, together with the constant function. Counting all such polynomials produces the bound in Theorem \ref{k-is-1}. We provide the full argument in Section \ref{phi-and-hyperplanes}.

For $k\geq 2$, this method is no longer feasible. By Fermat's Little Theorem, a polynomial over $R$ can have degree at most $p-1$ in each variable, hence there are not sufficiently many polynomials to span all hyperplane functions. We remedy this by using binomial coefficients as generalized polynomial functions. This allows us to define generalized polynomials of degree up to $p^k-1$, which is sufficient to span $\mathcal{H}^n$. Binomial coefficients were used in lieu of polynomials in \cite{CS1} for the purpose of extending the Ellenberg-Gijswijt bound on cap sets \cite{EG} to $R^n$; see also \cite{CS2} for an argument based on a more abstract concept of generalized polynomials, and \cite{CS3} for a third approach to cap sets in $R^n$ and a discussion of the relationship between these methods. We are not aware, however, of any previous applications of similar methods to studying hyperplane functions.

In Proposition \ref{hyper-to-phi}, we prove that hyperplane functions in $R^n$ are, in this sense, generalized $n$-variate polynomials of degree up to $p^k-1$. This implies our first theorem.

\begin{theorem}\label{main-thm-affine}
For $p$ prime and $k,n\in\NN$, we have
$$
\emph{dim}_{\ZZ/p\ZZ}(\mathcal{H}^n)
=\emph{rank}_{\ZZ/p\ZZ}(\mathcal{A}^*_{p^k,n})\leq {p^k-1+n\choose n}.
$$

\end{theorem}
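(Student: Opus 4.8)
The plan is to reduce the theorem to the assertion, promised as Proposition~\ref{hyper-to-phi}, that every hyperplane function on $R^n$ is a generalized polynomial of total degree at most $p^k-1$, and then count the available generalized polynomials. First I would set up the space of generalized polynomials: these are the $\ZZ/p\ZZ$-valued functions on $R^n$ spanned by the monomials $\binom{x_1}{j_1}\binom{x_2}{j_2}\cdots\binom{x_n}{j_n}$, where each $\binom{x_i}{j_i}$ is interpreted via the usual binomial-coefficient polynomial evaluated at $x_i$ and then reduced mod $p$, and the ``total degree'' of such a monomial is $j_1+\dots+j_n$. The key structural facts I would invoke or prove are: (a) because $\binom{x}{j}\equiv 0 \pmod p$ for $j\geq p^k$ when $x$ ranges over $\ZZ/p^k\ZZ$ (or, more precisely, the relevant functions repeat), it suffices to take $0\le j_i$ with $j_1+\dots+j_n\le p^k-1$ to capture everything of interest; and (b) these monomials are linearly independent as functions $R^n\to\ZZ/p\ZZ$, which follows from the one-variable statement that $\binom{x}{0},\binom{x}{1},\dots,\binom{x}{p^k-1}$ are linearly independent functions on $\ZZ/p^k\ZZ$ (a triangularity/finite-difference argument: applying the difference operator $\Delta f(x)=f(x+1)-f(x)$ lowers degree by exactly one) together with a tensor-product argument over the $n$ coordinates.

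The main step is Proposition~\ref{hyper-to-phi}: I would show that for any $a\in R^n$ and any nondegenerate direction $b$, the indicator ${\bf 1}_{H_b(a)}$ lies in the span of the monomials $\binom{x_1}{j_1}\cdots\binom{x_n}{j_n}$ with $\sum j_i\le p^k-1$. The natural route is to first reduce to the homogeneous case or to a one-variable situation: after an invertible linear change of variables (permitted since $b$ is a nondegenerate direction, so some coordinate of $b$ is a unit), one can assume $b=(1,0,\dots,0)$ up to translating, so that $H_b(a)$ becomes $\{x : x_1 = c\}$ for a fixed $c\in R$, whose indicator is a \emph{one-variable} function of $x_1$. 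Then I would expand the indicator ${\bf 1}_{\{x_1=c\}}$ as a $\ZZ/p\ZZ$-linear combination of $\binom{x_1}{0},\dots,\binom{x_1}{p^k-1}$ — which is possible precisely because those $p^k$ functions span all functions $\ZZ/p^k\ZZ\to\ZZ/p\ZZ$ (they are $p^k$ linearly independent elements of a $p^k$-dimensional space). Finally I would check that a general linear change of variables does not increase total degree in the $\binom{\cdot}{\cdot}$-grading: the subtle point is that $\binom{x+y}{j}=\sum_{i} \binom{x}{i}\binom{y}{j-i}$ (Vandermonde), so a shift $x_1\mapsto \langle x,b\rangle - \langle a,b\rangle$ expands a degree-$j$ generalized monomial in $x_1$ into generalized monomials in $x_1,\dots,x_n$ of total degree at most $j$; this is exactly the closure property that makes the argument work.

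Granting Proposition~\ref{hyper-to-phi}, the theorem follows by dimension counting: $\mathcal{H}^n$ is contained in the span of the monomials $\binom{x_1}{j_1}\cdots\binom{x_n}{j_n}$ with $j_1+\dots+j_n\le p^k-1$, and the number of such tuples $(j_1,\dots,j_n)$ of nonnegative integers is $\binom{p^k-1+n}{n}$ by the standard stars-and-bars count. Hence $\dim_{\ZZ/p\ZZ}(\mathcal{H}^n)\le \binom{p^k-1+n}{n}$. The equality $\dim_{\ZZ/p\ZZ}(\mathcal{H}^n)=\mathrm{rank}_{\ZZ/p\ZZ}(\mathcal{A}^*_{p^k,n})$ is essentially a definitional matching: the rows of $\mathcal{A}^*_{p^k,n}$ indexed by $(x,a)$ are the evaluation vectors of the functions $b\mapsto {\bf 1}_{H_b(a)}(x)$, but more usefully the columns, indexed by directions $b$... — here I would instead observe that the column span or row span of $\mathcal{A}^*_{p^k,n}$ over $\ZZ/p\ZZ$ is, by construction, the space of all functions spanned by $\{{\bf 1}_{H_b(a)}\}$, so its rank equals $\dim_{\ZZ/p\ZZ}(\mathcal{H}^n)$; I would state this carefully to make sure the indexing of rows and columns lines up with the definition of $\mathcal{H}^n$ as a \emph{function} space.

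I expect the genuine obstacle to be Proposition~\ref{hyper-to-phi} — specifically, verifying cleanly that passing from the normalized hyperplane $\{x_1=c\}$ back to a general $H_b(a)$ via the substitution $x_1\mapsto \langle x,b\rangle$ keeps the total degree in the binomial grading under control. One must be careful that the coefficients of $b$ are elements of $R=\ZZ/p^k\ZZ$ (not a field), so ``linear change of variables'' has to be handled through the Vandermonde identity for binomial coefficients rather than through naive substitution into polynomials; this is where the generalized-polynomial formalism, rather than ordinary polynomials over $R$, is doing the real work, and it is also what allows degrees up to $p^k-1$ rather than being capped at $p-1$ by Fermat.
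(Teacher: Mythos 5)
Your proposal follows essentially the same route as the paper: write $\one_{H_b(a)}$ as a one-variable function of $\langle x,b\rangle$ and expand that function in the basis $\phi_0,\dots,\phi_{p^k-1}$ (this is Lemma~\ref{lemma-move-to-phi}); show $\phi_\ell(\langle x,b\rangle)\in\Omega^n_\ell$ (the first half of Proposition~\ref{hyper-to-phi}); then count by stars and bars (Corollary~\ref{dim-of-Omega}). Your ``change of variables'' framing is just a restatement of the substitution $y\mapsto\langle x,b\rangle$ that the paper performs directly, so the architecture is the same, and you rightly locate the load-bearing step in the degree-control after that substitution.

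One point to pin down: the Vandermonde identity by itself does not deliver the degree bound you claim. After the Vandermonde expansion of $\phi_\ell(b_1x_1+\dots+b_nx_n)$ you are left with factors $\phi_{\alpha_i}(b_ix_i)$, and you still need each one to lie in $\Omega_{\alpha_i}$ as a function of $x_i$; the scalar $b_i$ need not be a unit. One cannot simply iterate Vandermonde again on $b_ix_i=x_i+\dots+x_i$, because that produces \emph{products} $\phi_{j_1}(x_i)\cdots\phi_{j_r}(x_i)$ in a single variable, and the degree of a product of phi-functions being at most the sum of the degrees is itself a nontrivial fact (Lemma~\ref{cor:phiprod}). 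The paper instead supplies the needed closure via Corollary~\ref{a-expansion}: the $\phi_j$-coefficient of $\phi_m(a\,\cdot)$ equals $\Delta_a^j\phi_m(0)$, which vanishes for $j>m$ by Lemma~\ref{lemma:Dm0}. You correctly flag this degree-closure under substitution as where the real work happens, but your sketch attributes it entirely to Vandermonde; be aware it requires this extra discrete-derivative (or product-degree) input.
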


However, unlike for $k=1$, hyperplane functions in $R^n$ with $k\geq 2$ need not span all such generalized polynomials.
In fact, we have the following bound, which is strictly lower than that in Theorem \ref{main-thm-affine}
when $k\geq 2$ and $n$ is small relative to $p^k$.

\begin{theorem}\label{main-upper-bound-1}
Let $p$ be prime, and let $k,n\in\NN$. Then
\begin{equation} 
\label{upperbound-1}
\emph{rank}_{\ZZ/p\ZZ}(\mathcal{A}_{p^k,n}^*)\leq (2n){\lfloor p^k/2\rfloor+(n-1)(p-1)+n\choose n},
\end{equation}
\end{theorem}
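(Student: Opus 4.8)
The plan is to upgrade the bound in Theorem \ref{main-thm-affine} by exploiting the fact that hyperplane functions in $R^n$ do not use all generalized monomials of degree up to $p^k-1$: they can only use those whose ``$p$-adic digit profile'' is compatible with being expressible in terms of binomial coefficients coming from linear forms $\langle x-a,b\rangle$. Concretely, I would first set up, using Proposition \ref{hyper-to-phi} (which I am told identifies hyperplane functions with generalized polynomials of degree $\le p^k-1$), a description of $\mathcal{H}^n$ as spanned by functions of the form $x\mapsto \binom{\langle x-a,b\rangle}{j}$ for $0\le j\le p^k-1$, $a\in R^n$, $b\in\PP R^{n-1}$. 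Expanding each such function in the generalized-monomial basis $\prod_i \binom{x_i}{d_i}$ via a Vandermonde/Cauchy-type convolution identity for binomial coefficients of linear forms, I would read off which multidegrees $(d_1,\dots,d_n)$ can possibly appear with nonzero coefficient mod $p$. The key point is Lucas' theorem: $\binom{m}{j}\not\equiv 0\pmod p$ forces the base-$p$ digits of $j$ to be dominated by those of $m$, and the convolution that produces $\prod_i\binom{x_i}{d_i}$ from $\binom{\langle x-a,b\rangle}{j}$ can only contribute when $\sum_i d_i \le j \le p^k-1$ \emph{and}, digit by digit, the digits of the $d_i$ sum (with carries) to something dominated by $j$'s digits.

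The heart of the counting is then: which multidegrees $(d_1,\dots,d_n)$ with $\sum_i d_i\le p^k-1$ are ``$p$-admissible'' in the sense that there exists a single $j\le p^k-1$ whose base-$p$ digits dominate the digitwise sum of the $d_i$? Because adding $n$ numbers base-$p$ produces carries, the digitwise constraint is roughly that the $d_i$, written in base $p$, have digits summing (per position, before carrying) to at most $p-1$ in ``most'' positions — but one must account for the carrying when comparing to $j$. I would argue that this forces $\sum_i d_i$ to be small in a precise sense: either the total degree $\sum d_i$ is at most $\lfloor p^k/2\rfloor + (n-1)(p-1)$, or else a counting-by-the-top-digit argument applies. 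The cleanest route is to split on the value of the most significant base-$p$ digit of $j$ (equivalently, a dyadic split $j < p^k/2$ vs. $j\ge p^k/2$, hence the $\lfloor p^k/2\rfloor$ and the factor $2$ in the statement) and bound, in each case, the number of admissible $(d_1,\dots,d_n)$ by the number of lattice points in a simplex of the stated size, $\binom{\lfloor p^k/2\rfloor+(n-1)(p-1)+n}{n}$; the extra factor $(2n)$ absorbs the two cases from the dyadic split together with an additional factor $n$ from choosing which coordinate carries the ``large'' part of the degree.

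In order of operations: (1) restate $\mathcal{H}^n$ in the generalized-monomial basis via Proposition \ref{hyper-to-phi} and record the convolution identity expressing $\binom{\langle x-a,b\rangle}{j}$ in terms of $\prod_i\binom{x_i}{d_i}$; (2) apply Lucas' theorem to extract the digitwise domination constraints on which $(d_1,\dots,d_n)$ can occur, as $a,b,j$ range; (3) define the set $\mathcal{D}\subseteq\{0,\dots,p^k-1\}^n$ of admissible multidegrees and observe $\dim_{\ZZ/p\ZZ}\mathcal{H}^n=\operatorname{rank}(\mathcal{A}^*_{p^k,n})\le |\mathcal{D}|$; (4) bound $|\mathcal{D}|$ by the dyadic/coordinate split into $2n$ regions, each of which embeds into the integer points of a simplex $\{e_1+\dots+e_n\le \lfloor p^k/2\rfloor+(n-1)(p-1)\}$, giving $\binom{\lfloor p^k/2\rfloor+(n-1)(p-1)+n}{n}$ points per region; (5) conclude \eqref{upperbound-1}.

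The main obstacle I anticipate is step (2)–(4): correctly bookkeeping the base-$p$ carries when summing $n$ digit-strings and then comparing to the digit-string of a single $j\le p^k-1$. The naive ``sum of digits is at most $p-1$ per position'' is too crude and gives the wrong (too large) bound; one genuinely needs that a carry out of the top position is forbidden (that is what caps the top digit and produces the $\lfloor p^k/2\rfloor$), while in lower positions one may borrow $(p-1)$ per coordinate per position, which is where $(n-1)(p-1)$ enters. Getting this carry analysis tight — and verifying that every admissible multidegree really does land in one of the $2n$ simplices with no double-undercounting or off-by-one in the binomial — is the delicate part; the rest is routine manipulation of binomial coefficients and Lucas' theorem.
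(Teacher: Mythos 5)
There is a genuine gap in your approach, and it is not where you think it is (the carry bookkeeping). Your step (3)--(4) bounds $\dim_{\ZZ/p\ZZ}\mathcal{H}^n$ by $|\mathcal{D}|$, where $\mathcal{D}$ is the set of multidegrees $\beta$ for which $\phi_\beta=\prod_i\binom{x_i}{\beta_i}$ can appear with nonzero coefficient in some hyperplane function. This cannot improve on the trivial bound ${p^k-1+n\choose n}$, because $\mathcal{D}$ is in fact \emph{all} of $\{\beta\in[p^k]^n:|\beta|\le p^k-1\}$. Indeed, take $b=(1,1,\dots,1)$, $a=(-1,0,\dots,0)$ and $j=|\beta|$. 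Then
$\phi_j(\langle x-a,b\rangle)=\phi_j(1+x_1+\cdots+x_n)=\sum_{\ell_0+\ell_1+\cdots+\ell_n=j}\phi_{\ell_0}(1)\phi_{\ell_1}(x_1)\cdots\phi_{\ell_n}(x_n)$
by iterated Vandermonde, and the coefficient of $\phi_\beta(x)$ is $\phi_{j-|\beta|}(1)=\phi_0(1)=1\neq 0$. So the Lucas/digit-domination constraint you are counting on is vacuous once $a$ and $b$ are allowed to vary: the free parameters make every column of the transition matrix nonzero, and a pure column count just reproduces ${p^k-1+n\choose n}$. No dyadic split on the top digit of $j$ can rescue this, since all columns are already reachable.

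The paper's proof is structurally different and does not try to shrink the set of reachable monomials. After a factor of $n$ from fixing which coordinate of the normal direction $b$ is the invertible one, the relevant incidence matrix factors as $\Psi\mathbb{B}^{(n)}\Phi$ with $\Psi,\Phi$ nonsingular, so the rank equals $\operatorname{rank}(\mathbb{B}^{(n)})$, a matrix whose rows are indexed by $(m,\widetilde\alpha)$ and columns by $\beta$. The crucial structural input, which your plan does not identify, is the degree-lowering lemma (Proposition \ref{product-degree}): $\phi_m(xy)$, viewed as a function of two variables, has total degree at most $m+2(p-1)$, not $2m$. Applied coordinatewise, this yields the joint sparsity constraint $\mathbb{B}^{(n)}_{(m,\widetilde\alpha),\beta}=0$ whenever $|\widetilde\alpha|+|\beta|>m+2(n-1)(p-1)$ --- a constraint relating \emph{rows to columns}, not a restriction on which columns occur. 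One then splits $\mathbb{B}^{(n)}$ into two pieces by a threshold $\lambda$ on $m-|\widetilde\alpha|$, bounds the rank of one piece by its nonzero columns and the other by its nonzero rows, and optimizes $\lambda=\lfloor p^k/2\rfloor-(n-1)(p-1)$, producing the factor $2$. It is this row/column duality, powered by the degree-lowering lemma, that beats the trivial bound; a column count alone cannot.
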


Theorems \ref{main-thm-affine} and \ref{main-upper-bound-1} imply upper bounds on the ranks of 
$W^*_{p^k,n}$ and $W_{p^k,n}$, via the next proposition.

\begin{proposition}
\label{compare-both}
Let $n\in\NN$, $n\geq 2$. Then
\[ \emph{rank}_{\ZZ/p\ZZ}(W_{p^k,n})\leq1+k\cdot\emph{rank}_{\ZZ/p\ZZ}(W_{p^k,n}^*), \]
$$
\emph{rank}_{\ZZ/p\ZZ}(W_{p^k,{n+1}}^*)
\leq 2 (k+1) \cdot\emph{rank}_{\ZZ/p\ZZ}(\cala_{p^k,n}^*) .
$$

\end{proposition}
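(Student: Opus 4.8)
The plan is to realize each larger incidence matrix as a block-structured object built from copies of the smaller one, and then to bound the rank of a block matrix by the sum of the ranks of its distinct block types. For the first inequality, I would partition the columns of $W_{p^k,n}$ according to the "depth" of the vector $y$, i.e., the largest power $p^j$ dividing all coordinates of $y$. If $y=p^j y'$ with $y'\in\mathbb{S}^{n-1}(\ZZ/p^{k-j}\ZZ)$ a genuine direction, then the condition $\langle x,y\rangle=0$ in $R$ becomes $\langle x,y'\rangle\equiv 0 \pmod{p^{k-j}}$, which is membership of $x$ in a homogeneous hyperplane of $(\ZZ/p^{k-j}\ZZ)^n$ lifted back to $R^n$. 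The columns with $j=k$ (i.e. $y=0$) contribute only the all-ones column, accounting for the additive "$1$". For each $j\in\{0,1,\dots,k-1\}$, the corresponding column block is obtained from $W_{p^{k-j},n}^*$ by duplicating columns (scaling $y'$ by a unit, and the reduction map $R^n\to(\ZZ/p^{k-j}\ZZ)^n$ is surjective with all fibers of equal size), and duplicating columns does not increase rank; moreover $\Rank(W_{p^{k-j},n}^*)\leq\Rank(W_{p^k,n}^*)$ by a similar lifting/restriction argument, or can be absorbed directly. Summing over the $k$ values of $j$ and adding $1$ for the zero column gives the stated bound.

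For the second inequality, I would similarly decompose $W^*_{p^k,n+1}$. Its rows are indexed by nondegenerate directions $b\in\PP R^n$ in one higher dimension; I would split a direction $b=(b_0,b')\in R\times R^n$ according to (a) whether the first coordinate $b_0$ is a unit, and if not, (b) the depth of $b_0$. When $b_0\in R^\times$, I can normalize $b_0=1$, and then $H_b=\{(x_0,x')\in R^{n+1}: x_0=-\langle x',b'\rangle\}$ is the graph of a linear function; the incidence pattern of such a hyperplane with the points $x=(x_0,x')$ is governed entirely by the value $\langle x',b'\rangle\in R$ compared against $-x_0$, and as $b'$ ranges over $R^n$ this is exactly (a column-duplicated version of) the matrix recording, for each $(x',a)$ with $a\in R^n$, whether $x'$ lies in the affine hyperplane $H_{b'}(a)$ — i.e. a submatrix of $\cala^*_{p^k,n}$ after identifying $x_0$ with the "$a$"-type parameter (more precisely, $\langle x',b'\rangle = \langle x'-a,b'\rangle$ with $\langle a,b'\rangle=-x_0$, so one groups rows/columns appropriately). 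This block thus has rank at most $\Rank(\cala^*_{p^k,n})$. When $b_0=p^j u$ with $u$ a unit and $1\le j\le k$, one rescales so that $b=(p^j, b')$ with $b'\in\mathbb{S}^{n-1}(\ZZ/p^{k}\ZZ)$... and here the hyperplane equation $p^j x_0 + \langle x',b'\rangle=0$ in $R$ must be analyzed; the key observation is that for such $b$, if $b'$ itself has depth $\geq j$ then the equation lives modulo $p^{k-j}$ and reduces to a lower-modulus affine hyperplane incidence, while if $b'$ has depth $<j$ the first coordinate plays an auxiliary role that can again be folded into the affine parameter. Each of these at most $k+1$ block types has rank bounded by $\Rank(\cala^*_{p^k,n})$, possibly after the harmless operations of column duplication, row/column permutation, and multiplication of a row by $-1$; doubling to account for the two "orientations" (which coordinate, first or one of the last $n$, is designated as the distinguished one, or equivalently the split between the "$x_0=\cdots$" form and its transpose-like counterpart) yields the factor $2(k+1)$.

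The main obstacle is the bookkeeping in the second inequality: making precise the claim that a hyperplane in $R^{n+1}$ whose distinguished coefficient is a unit restricts, on the point set, to an affine-hyperplane incidence in $R^n$ with the offset $a$ absorbing the $(n+1)$-st coordinate, and correctly handling the degenerate/higher-depth cases so that every column block is genuinely a column-duplicate of a submatrix of $\cala^*_{p^k,n}$. One must check that the reduction maps between the various modulus levels are surjective with constant fiber size (so that passing to $(\ZZ/p^{k-j}\ZZ)^n$ only replicates columns and never merges distinct incidence patterns in a rank-increasing way), and that $\Rank(\cala^*_{p^{k-j},n})\le\Rank(\cala^*_{p^k,n})$ for $j\ge 0$ so the smaller-modulus blocks can be bounded uniformly — this last monotonicity should follow by lifting a spanning set of hyperplane functions from $(\ZZ/p^{k-j}\ZZ)^n$ to $R^n$ via the quotient map, since the preimage of a hyperplane is a union of hyperplanes. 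The rank-additivity-over-blocks step and the rank-invariance under column duplication are routine linear algebra and need no detailed argument.
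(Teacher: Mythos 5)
Your first inequality is proved exactly the way the paper does it: partition the columns of $W_{p^k,n}$ by the depth of $y$, identify the depth-$j$ block (up to row/column duplication and transposition) with $W^*_{p^{k-j},n}$, peel off the all-ones column from $y=0$, and then use $\Rank(W^*_{p^{k-j},n})\le\Rank(W^*_{p^k,n})$. (For the latter, your ``preimage of a hyperplane is a union of hyperplanes'' argument shows $\Rank(W^*_{p^{k-j},n})\le\Rank(\cala^*_{p^k,n})$, not the monotonicity for $W^*$ itself; the clean argument is to restrict the columns of $W^*_{p^k,n}$ to $p^{k-j}R^n$. The paper is silent on this step as well, so it is a shared omission rather than a defect of your proposal.)

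For the second inequality your decomposition is genuinely different from the paper's. The paper first partitions the \emph{columns} of $W^*_{p^k,n+1}$ by the depth of $x_{n+1}$, and then, inside each column block, splits the \emph{rows} into those with $(b_1,\dots,b_n)$ a direction and those with $(b_2,\dots,b_{n+1})$ a direction; this produces $2(k+1)$ blocks. You instead partition the \emph{rows} of $W^*_{p^k,n+1}$ by the depth of a distinguished coordinate of the direction $b$, giving $k+1$ row blocks. Your decomposition can in fact be pushed through, and it yields the stronger constant $(k+1)$ rather than $2(k+1)$: choosing for each direction in the depth-$j$ class the representative $b=(p^j,b')$, the $(x_0,x')$-column of the depth-$j$ block is the function $b'\mapsto\mathbf{1}[\langle x',b'\rangle=-p^jx_0]$, and for \emph{every} $(x_0,x')$ this set of $b'$ is an affine hyperplane (when $x'$ is a direction), a union of parallel hyperplanes (when $x'=p^iy'$ with $y'$ a direction), the empty set, or all of $R^n$ --- each of which has indicator function in $\calh^n$. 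Hence each block has column rank at most $\dim\calh^n=\Rank(\cala^*_{p^k,n})$.

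However, the way you have it written contains two real errors in the case analysis, and the announced ``doubling'' masks them. First, when $p^j\parallel b_0$ with $j\ge1$, the remaining coordinates $b'$ \emph{must} form a direction in $R^n$ (otherwise $b$ would not be nondegenerate), so your case ``if $b'$ itself has depth $\ge j$'' never occurs; your proposed reduction ``modulo $p^{k-j}$ to a lower-modulus affine hyperplane incidence'' is vacuous, and the monotonicity $\Rank(\cala^*_{p^{k-j},n})\le\Rank(\cala^*_{p^k,n})$ that you flag as a needed ingredient is in fact not used. Second, the claim that each depth block is ``a column-duplicate of a submatrix of $\cala^*_{p^k,n}$'' after ``identifying $x_0$ with the $a$-type parameter'' is not correct as a matrix identification: the offset $a$ you would need satisfies $\langle a,b'\rangle=-p^jx_0$, which depends on the row index $b'$, so no single choice of $a$ turns a column of your block into a column of $\cala^*_{p^k,n}$. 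The correct statement is weaker and lives at the level of linear spans, as above --- you need the facts that the constant function $\mathbf{1}_{R^n}$ and the indicators of ``thick'' hyperplanes (unions of $p^i$ parallel affine hyperplanes) lie in $\calh^n$, and these are not obtainable by column duplication, permutation, or sign changes alone. With those two points repaired your route is correct, and arguably cleaner than the paper's, but as written the key step bounding the depth-$j$ blocks is not actually established.
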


Theorem \ref{main-upper-bound-1} raises the question of how we can tell whether a given generalized polynomial of degree at most $p^k-1$ is a hyperplane function. Our generalized polynomials share many geometric properties of hyperplane functions. For example, if $L,L'$ are two parallel lines in $R^n$, then $|L\cap H|\equiv |L'\cap H|$ mod $p$ for any hyperplane $H$; we prove in Proposition \ref{parallel-lines} that an appropriate analogue of this holds for generalized polynomials of degree up to $p^k-1$. Nonetheless, we are able to find a class of functions on $R^n$ we call \emph{fans} that are orthogonal over $\ZZ/p\ZZ$ to all hyperplane functions, but not to some of our generalized polynomials of degree up to $p^k-1$. Essentially, this test identifies generalized polynomials that behave like hyperplane functions on each scale separately, but the directions are not consistent between the scales. Since the statement of the result requires some notation, we postpone it to Section \ref{sec-fan}. While a generalized polynomial must satisfy our orthogonality condition in order to be a hyperplane function, we do not know whether this condition is also sufficient.

Our interest in hyperplane functions is motivated in part by the recent work of Dhar and Dvir \cite{DD}, where a connection was established a connection between point-hyperplane incidence matrices and the Kakeya problem. For $k=1$, Dhar and Dvir used Theorem \ref{k-is-1} 
to give a new proof of Dvir's result \cite{Dvir}
 that a Kakeya set \(S\subset (\ZZ/p\ZZ)^n\) must satisfy \(|S|\gtrsim_\epsilon p^{n-\epsilon}\)
for any \(\epsilon>0\). 
They were then able to extend this matrix-based argument to prove the Kakeya conjecture in $\ZZ/N\ZZ$ for squarefree $N$.
In $R=\ZZ/p^k\ZZ$ with $k\geq 2$, Dhar and Dvir were still able to bound the size of Kakeya sets in $R^n $ from below by the $\mathbb{F}_p$-rank of $W_{p^{k},n}^*$. (In \cite[Theorem 1.6]{DD}, the authors refer to the rank of $W_{p^{k},n}$; however, their argument uses the matrix $W_{p^{k},n}^*$ instead. The two ranks are not equal, but they are comparable; see Lemma \ref{ranksUnequalFor2} and Proposition \ref{W-times-2}.)

Unfortunately, relatively little has been known about the $\mathbb{F}_p$-rank of point-hyperplane incidence matrices in $R^n$. 
Dhar and Dvir \cite[Lemma 5.3]{DD} observe that the rank of $W_{p^k.n}$ is bounded from below by the size of a maximal {\em matching vector family} in $R^n$. Combining this with the results of \cite{DGY11,YGK12} yields a lower bound on the rank of $W_{p^k,n}$ of the order $p^{kn/2}$, therefore a lower bound of the same order on the size of Kakeya sets in $R^n$.  Dhar and Dvir observe further that, in light of an upper bound on the size of matching vector families given in \cite{DH13b}, this method cannot yield significantly better lower bounds.

The Kakeya conjecture in $R^n$ was eventually resolved by Arsovski \cite{Ar}, based on a comparison of the size of Kakeya sets to the rank of a different matrix that, in general, may have higher rank than $W_{p^k,n}$. Subsequently, Dhar \cite{Dhar} proved the Kakeya conjecture in $\ZZ/N\ZZ$ for general $N$, with further progress in \cite{Dhar2, Dhar3}.

The question of the rank of the point-hyperplane incidence matrices in Definition \ref{def-hyperplane} was left open. While this is no longer needed for the Kakeya problem, we believe it to be of independent interest, as it provides a good testing ground for variants of the polynomial method that rely on generalized polynomials.

This paper is organized as follows.
We study the relationships between the ranks of the different incidence matrices in Section
\ref{sec:prelim}. Proposition \ref{compare-both} follows from Propositions \ref{W-times-2} and \ref{compareAW}.
In Section \ref{section-define-phi}, we define our generalized polynomials in one variable based on binomial coefficients.
The rest of Section \ref{section-define-phi}, as well as Section \ref{discrete-derivatives},
are dedicated to the study of the properties of these functions. An important feature of a ``generalized polynomial" of degree $m$ is that its derivatives of order $m+1$ should vanish; we prove in Lemma \ref{lemma:Dm0} that our binomial functions have this property.

In Section \ref{sec-multidim-phi}, we extend our generalized polynomials to $R^n$ and prove that they are, again, well behaved with respect to discrete derivatives. We also prove that hyperplane functions in $R^n$ are generalized polynomials of degree at most $p^k-1$. In particular, Theorem \ref{main-thm-affine} follows from Proposition \ref{hyper-to-phi}. We note that, while an {\it ad hoc} application of binomial coefficients was sufficient in \cite{CS1}, we need to develop our theory more systematically.

A major difficulty in working with binomial coefficients is that they do not have good multiplicative properties. This is one reason why there is no straightforward way to adapt the methods from the $k=1$ case to our setting (and why, for the time being, we are only able to prove partial results). This turns out to be more than just a technical issue. Our results in Section \ref{sec-degree-lowering} show that the behaviour of our generalized polynomials is genuinely different than that of classical polynomials. For example, $(xy)^m=x^my^m$ is a bivariate polynomial of degree $2m$; on the other hand, if $f$ is a generalized polynomial of degree $m$ on $R$, then the degree of $f(xy)$ cannot be much larger than $m$. This degree reduction is the main idea behind the proof of Theorem \ref{main-upper-bound-1} in Section \ref{sec-main-upper-bound-1}.

Finally, in Section \ref{sec-fan} we study the geometric properties of lines and hyperplanes in $R^n$, and develop a test that (at least in some cases) allows us to determine that a given generalized polynomial is not a hyperplane function.

Throughout this article, we will observe the following conventions. 
Arithmetic operations and equalities for elements of $R$ will be defined in $R$, that is, modulo $p^k$. For example, if $a,b\in R$, the equality $a=b$ will mean that $a\equiv b$ mod $p^k$. When we work with functions with values in $\ZZ/p\ZZ$ (such as the $\phi_m$ functions defined in (\ref{def:phi})), all arithmetic operations and equalities involving such functions will be understood to hold in $\ZZ/p\ZZ$. In expressions such as $a f(x)$, where $a,x\in R$ and $f$ is a function $R\rightarrow \ZZ/p\ZZ$, we will interpret $a$ as the function $a\to (a$ mod $p)$, so that $af(x)$ refers to the  function $(a$ mod $p)f(x)$ with values in $\ZZ/p\ZZ$. The inner product in $R$ is an $R$-valued function, so that $\langle x,y \rangle=c$ means that $x_1y_1+\dots +x_ny_n\equiv c$ mod $p^k$ and not just mod $p$.
On the other hand, if $f,g$ are two functions from $R^n$ to $\ZZ/p\ZZ$, their inner product
$$
\langle f,g\rangle = \sum_{x\in R^n} f(x) g(x)
$$
takes values in $\ZZ/p\ZZ$.

In line with our use of functions with range in $\ZZ/p\ZZ$, whenever we refer to the rank of a matrix, the span of a set of vectors, or the dimension of a linear space of functions, this rank, span, or dimension is taken over $\ZZ/p\ZZ$ unless explicitly stated otherwise.

For $m\in\NN$, we write $[m]=\{0,1,\dots,m-1\}\subset\ZZ$. We will distinguish between $R$, a ring with addition and multiplication mod $p^k$, and $[p^k]$, a set of integers where addition and multiplication are inherited from $\ZZ$ (so that $[p^k]$ is not closed under these operations). 
Exponents, indices, etc.~will always be integers unless stated explicitly otherwise. For example, if $\ell$ is the degree of a polynomial or a generalized polynomial, we will write $\ell\in [p^k]$ and not $\ell\in R$. 

We use the notation $|S|$ to denote the cardinality of a set $S$, and the notation $p^j\parallel a$ to mean $p^j\mid a$ but $p^{j+1}\nmid a$. We also use subscripts $1,\dots,n$ to denote both the coordinates $x=(x_1,\dots,x_n)$ of a point $x\in R^n$ and the $p$-adic digits in the expansion $x=\sum_{j=0}^{k-1}x_jp^j$ of an element $x\in R$. This should not cause confusion, since we will only use one of the above at a time and the meaning will be clear from context. Whenever we mention the $p$-adic expansion or $p$-adic digit of a number $x$, we refer to the unique expansion $x=\sum_{j=0}^k x_jp^j$ with $x_j\in\{0,1,\dots,p-1\}$ for all $j$.


\section{Relationships between incidence matrices}
\label{sec:prelim}


We first observe that
\begin{equation}
\label{compareWs}
 \mathrm{rank}(W_{p^k,n}^*)\leq \mathrm{rank}(W_{p^k,n}),
\end{equation}
 since the rows of $W_{p^k,n}^*$ form a subset of the rows of $W_{p^k,n}$. Lemma \ref{ranksUnequalFor2} shows that the inequality can be strict for $k\geq 2$.

\begin{lemma}\label{ranksUnequalFor2}
If \(k\in\NN\) and \(k\geq 2\), then
$\emph{rank}(W_{p^k,2})>\emph{rank}(W_{p^k,2}^*) $.
\end{lemma}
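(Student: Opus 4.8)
The plan is to exhibit a single column of $W_{p^k,2}$ that does not lie in the span of the columns of $W_{p^k,2}^*$ over $\ZZ/p\ZZ$. Recall that $W_{p^k,2}$ has rows and columns indexed by $x \in R^2$, with a $1$ in position $(x,y)$ when $\langle x,y\rangle = 0$; the matrix $W_{p^k,2}^*$ is obtained by restricting the columns to those $y$ that are \emph{nondegenerate directions}, i.e.\ $y \in \SS^1(R)$. So the extra columns of $W_{p^k,2}$ are precisely those indexed by $y \in R^2 \setminus \SS^1(R)$, meaning every coordinate of $y$ is divisible by $p$.

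**Isolating a degenerate direction.** Consider the column of $W_{p^k,2}$ indexed by $y_0 = (p^{k-1}, 0)$. Since $k \geq 2$, this $y_0$ is not in $\SS^1(R)$, so the corresponding column does not appear in $W_{p^k,2}^*$. The entry at row $x = (x_1, x_2)$ equals $1$ iff $p^{k-1} x_1 \equiv 0 \pmod{p^k}$, i.e.\ iff $p \mid x_1$; this is independent of $x_2$. Thus the column is the characteristic vector of the set $\{x \in R^2 : p \mid x_1\}$, a union of hyperplanes — or more precisely it equals $\sum_{c} \one_{H_{y_0}(ce_1)}$ summed over coset representatives, but as a \emph{single} vector it is $\one_{\{p \mid x_1\}}$.

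**The obstruction argument.** The main step — and the main obstacle — is showing this vector is \emph{not} in the column span of $W_{p^k,2}^*$. I would argue by a counting or orthogonality invariant. Each column of $W_{p^k,2}^*$ is $\one_{H_b}$ for a genuine homogeneous hyperplane $H_b$ with $b \in \PP R^1$. For $k\ge 2$ and $n=2$ one computes $|H_b| = p^{k}$ exactly (a homogeneous hyperplane through the origin with a nondegenerate direction contains $p^{k-1}$ solutions in $x_1$ for each invertible coordinate normalization, giving $p^k$ points), whereas $|\{x : p\mid x_1\}| = p^{k-1}\cdot p^k = p^{2k-1}$. More usefully, I would look at the functional $x \mapsto$ restriction to the line $\{x_2 = 1\}$, or count intersections with a suitable affine line $L$: for every nondegenerate $b$, the restriction of $\one_{H_b}$ to $L$ is the indicator of an arithmetic-progression-like set whose size mod $p$ can be controlled, while $\one_{\{p\mid x_1\}}$ restricted to the line $L = \{(t, 0) : t \in R\}$ is $\one_{\{p \mid t\}}$, a vector of weight $p^{k-1}$. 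The cleanest route is probably: restrict everything to $L = Re_1$, observe that each $\one_{H_b}|_L$ is either all of $L$, a single point $\{0\}$, or a coset-structured set, and show by a direct linear-algebra computation over $\ZZ/p\ZZ$ on the $p^k$-dimensional space of functions on $L$ that $\one_{\{p\mid t\}}$ lies outside the span of these restrictions. I expect the delicate point to be correctly enumerating which subsets of $L$ arise as $H_b \cap L$ over all $b \in \PP R^1$ and verifying the independence claim; handling the case $p = 2$ separately (where $\lfloor p^k/2\rfloor$-type parities behave differently) may also require care. Once the obstruction is established, combined with $\mathrm{rank}(W_{p^k,2}^*) \le \mathrm{rank}(W_{p^k,2})$ from \eqref{compareWs}, the strict inequality follows.
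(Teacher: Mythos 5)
You have correctly identified the right vector to work with: the column (equivalently row, by symmetry of $W_{p^k,2}$) indexed by the degenerate direction $y_0=(p^{k-1},0)$, whose indicator is $\one_{\{x\in R^2:\,p\mid x_1\}}$, is exactly the vector the paper uses. However, the obstruction step is not just ``delicate'' as you suggest --- the proposed method of restricting to a single line cannot work. For $L=\{(t,0):t\in R\}$, the restriction of $\one_{H_{y_0}}$ to $L$ is $\one_{\{p\mid t\}}$, and this is \emph{identical} to the restriction of $\one_{H_b}$ for the nondegenerate direction $b=(p^{k-1},1)$, since on $L$ one has $x_2=0$. So the restriction of the degenerate column already lies in the span (indeed in the set) of restrictions of nondegenerate columns, and this map forgets precisely the distinction you need. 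Restricting to $\{x_2=1\}$ is no better: there every $H_b\cap L$ with $b$ nondegenerate is either empty or a single point, and singletons span the full space of functions on $L$, so again no obstruction survives. No restriction to a single line can detect that $\one_{H_{y_0}}$ is outside the column span of $W^*_{p^k,2}$.

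The paper's argument is global rather than a restriction: suppose
\[ \one_{H_{y_0}}(x)=\sum_{i\in[p^k]}\alpha_i\one_{L_{(1,i)}}(x)+\sum_{j\in[p^{k-1}]}\beta_j\one_{L_{(pj,1)}}(x), \]
where $L_b=\{tb:t\in R\}$ ranges over all homogeneous hyperplanes. Evaluating at the points $(pj,1)$ for $j\in[p^{k-1}]$ forces $\beta_j=1$ for every $j$, because $(pj,1)$ lies in $H_{y_0}$, lies in $L_{(pj,1)}$ only, and lies in no $L_{(1,i)}$. Then evaluating at $(0,p^{k-1})$, which lies in $H_{y_0}$ and in \emph{every} $L_{(pj,1)}$ but in no $L_{(1,i)}$, gives right-hand side $\sum_j\beta_j=p^{k-1}\equiv 0\pmod p$ precisely because $k\geq2$, while the left-hand side is $1$ --- a contradiction. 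To close your gap you need a two-step evaluation argument of this kind (pin down coefficients at one family of points, then contradict at another), not a single restriction.
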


\begin{proof} 
All directions in \(R^2\) can be represented by one of the elements of the set
\[ \mathcal{D}=\{(1,i): i\in R\} \cup \{(jp,1): j\in\{0,1,\dots,p^{k-1}-1\}\}. \]
Given a direction $b\in R^2$, define
$$
L_b =\{ t b:\ t\in R\}.
$$

Given \(b\in\mathcal{D}\), there is some \(c\in \mathcal{D}\) such that \(H_b=\text{span}(c):=\{\lambda c:\lambda\in R\}\). Let 
\[ \mathcal{H}=\{\one_{H_b}:b\in\mathcal{D}\}=\{\one_{L_b}:b\in\mathcal{D}\}. \]
Then \(\mathcal{H}\) consists of exactly the rows of \(W_{p^k,2}^*\), and is a subset of the rows of \(W_{p^k,2}\).

Let $y=(p^{k-1},0)$, then the indicator function of 
$H_y:=\{x\in R^n:\langle x,y\rangle=0 \}$ is a row of $W_{p^k,2}$. We claim that
\[ \one_{H_y}\not\in \text{span}\mathcal{H}.\]
Assume towards contradiction that there are scalars \(\alpha_i\), \(\beta_j\) such that
\begin{equation}\label{n2-e1}
 \one_{H_y}(x)=\sum_{i=0}^{p^k-1}\alpha_i\one_{L_{(1,i)}}(x)+\sum_{j=0}^{p^{k-1}-1}\beta_j\one_{L_{(pj,1)}}(x).
 \end{equation}
We first evaluate (\ref{n2-e1}) at $x=(pj,1)$ for  \(j\in\{0,\dots,p^{k-1}-1\}\).
Since $(pj,1)\in H_y$ but
\[
(pj,1)\not\in L_{(1,i)},\ \ (pj,1)\not\in L_{(p\ell,1)} \text{  if } j\neq \ell, \]
 it follows that \(\beta_j=1\) for all \(j\). 
Now evaluate (\ref{n2-e1}) at $x=(0,p^{k-1})$. Since 
\[ (0,p^{k-1})\not\in L_{(1,i)} \text{ for all \(i\), but } (0,p^{k-1})\in L_{(pj,1) }\text{  for all } j, \] we have
\[ \sum_{i=0}^{p^k-1}\alpha_i\one_{L_{(1,i)}}(0,p^{k-1})+\sum_{j=0}^{p^{k-1}-1}\beta_j\one_{L_{(pj,1)}}(0,p^{k-1}) =p^{k-1}= 0\bmod{p}.\]
This is a contradiction, as \((0,p^{k-1})\in H_y\). 
\end{proof}

In the next proposition, we provide a partial converse to the inequality in~(\ref{compareWs}). 

\begin{proposition}\label{W-times-2}
Let $n\geq2$ and $k\geq1$. Then 
\[ \emph{rank}(W_{p^k,n})\leq1+\sum_{j=1}^k\emph{rank}(W_{p^j,n}^*),  \]
and consequently, 
\[ \emph{rank}(W_{p^k,n})\leq1+k\cdot\emph{rank}(W_{p^k,n}^*).  \]
\end{proposition}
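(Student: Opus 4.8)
The plan is to decompose an arbitrary row ${\bf 1}_{H_y}$ of $W_{p^k,n}$, where $y=(y_1,\dots,y_n)\in R^n$ is arbitrary (not necessarily a direction), into a linear combination over $\ZZ/p\ZZ$ of rows of the reduced matrices $W_{p^j,n}^*$ for $j=1,\dots,k$, together with the constant function $\one$. Write $p^{k-j}\parallel y$, i.e. $y=p^{k-j}y'$ where $y'\in R^n$ has at least one invertible coordinate; the case $y=0$ contributes only the constant function $\one_{R^n}$, which accounts for the leading $1$. For $y\neq 0$, the hyperplane condition $\langle x,y\rangle=0$ in $R=\ZZ/p^k\ZZ$ is equivalent to $\langle x,y'\rangle\equiv 0\pmod{p^j}$, so ${\bf 1}_{H_y}$ depends only on the residue of $x$ modulo $p^j$ and is, after the obvious identification, the indicator of a homogeneous hyperplane with direction $\bar y'\in\PP(\ZZ/p^j\ZZ)^{n-1}$ in $(\ZZ/p^j\ZZ)^n$, pulled back to $R^n$.

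The key step is to show that such a pullback lies in the span of the rows of $W_{p^j,n}^*$ once we pull those back too. Concretely, let $\pi_j\colon R^n\to(\ZZ/p^j\ZZ)^n$ be reduction mod $p^j$, and note $\pi_j$ is surjective with all fibers of equal size $p^{(k-j)n}$. Given a function $g$ on $(\ZZ/p^j\ZZ)^n$, its pullback $g\circ\pi_j$ on $R^n$ satisfies: the $\ZZ/p\ZZ$-linear map $g\mapsto g\circ\pi_j$ is injective, and it sends the indicator of a hyperplane $H_b\subset(\ZZ/p^j\ZZ)^n$ (with $b$ a direction) to the indicator of the corresponding hyperplane in $R^n$. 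Hence $\dim_{\ZZ/p\ZZ}\Span\{ {\bf 1}_{H_b}\circ\pi_j : b\in\PP(\ZZ/p^j\ZZ)^{n-1}\}=\Rank(W_{p^j,n}^*)$, and every row ${\bf 1}_{H_y}$ of $W_{p^k,n}$ with $p^{k-j}\parallel y$ and $y\neq 0$ lies in this space. Summing over $j=1,\dots,k$ and adding the one-dimensional span of $\one$ gives
$$
\Rank(W_{p^k,n})\;\le\;1+\sum_{j=1}^{k}\Rank(W_{p^j,n}^*).
$$
The second, weaker inequality then follows by the monotonicity $\Rank(W_{p^j,n}^*)\le\Rank(W_{p^k,n}^*)$ for $j\le k$: indeed, a homogeneous hyperplane function in $(\ZZ/p^j\ZZ)^n$ pulls back under reduction mod $p^{k-j}$... more directly, the rows of $W_{p^j,n}^*$ embed into the rows of $W_{p^k,n}^*$ via the same pullback construction applied with target $R^n$, so their rank does not decrease. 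Replacing each summand by $\Rank(W_{p^k,n}^*)$ yields the bound $1+k\cdot\Rank(W_{p^k,n}^*)$.

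I expect the main obstacle to be the bookkeeping around degenerate directions and the $y=0$ row: one must check that when $p^{k-j}\parallel y$, the reduced direction $\bar y'$ is genuinely nondegenerate in $(\ZZ/p^j\ZZ)^n$ (so that $H_{\bar y'}$ is an honest hyperplane counted by $W_{p^j,n}^*$), and that the pullback map is injective on the relevant spans so that ranks are preserved rather than merely bounded. These are routine but need the convention, stated in the excerpt, that $\langle x,y\rangle=0$ is required to hold in $R$ exactly; the reduction to a congruence mod $p^j$ is where that convention is used. The verification that the pullback of a hyperplane indicator is again a hyperplane indicator in $R^n$ (with the same direction, up to the identification) is immediate from the definition $H_b(a)=\{x:\langle x-a,b\rangle=0\}$ once one notes $\langle x,y\rangle=p^{k-j}\langle x,y'\rangle$ and that multiplication by $p^{k-j}$ is injective from $\ZZ/p^j\ZZ$ into $R$.
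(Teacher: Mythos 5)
Your proof of the first inequality is correct and is essentially the same decomposition as the paper's: you partition the rows $\one_{H_y}$ of $W_{p^k,n}$ by the $p$-adic valuation of $y$, and identify each group with pullbacks (under $\pi_j$) of the rows of $W^*_{p^j,n}$; the paper partitions the columns and phrases the same identification as a vertical concatenation of copies of the reduced matrix, but these amount to the same thing since $W_{p^k,n}$ is symmetric. The injectivity of $g\mapsto g\circ\pi_j$ is the right observation and makes that step clean.

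However, your derivation of the second inequality from the first has a genuine gap. You claim the monotonicity $\mathrm{rank}(W^*_{p^j,n})\le\mathrm{rank}(W^*_{p^k,n})$ holds because ``the rows of $W^*_{p^j,n}$ embed into the rows of $W^*_{p^k,n}$ via the same pullback construction applied with target $R^n$.'' This is false: the pullback $\one_{H_b}\circ\pi_j$, for $b$ a direction in $(\ZZ/p^j\ZZ)^n$ and $j<k$, is the indicator of $\{x\in R^n:\langle x,p^{k-j}\tilde b\rangle=0\}$ for a lift $\tilde b$ of $b$, and $p^{k-j}\tilde b$ is a \emph{degenerate} vector. So this function is a row of $W_{p^k,n}$, not of $W^*_{p^k,n}$. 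Worse, it need not even lie in the row \emph{span} of $W^*_{p^k,n}$: Lemma~\ref{ranksUnequalFor2} of the paper exhibits exactly such a function, $\one_{H_{(p^{k-1},0)}}=\one_{\{x_1\equiv0\bmod p\}}$ in $(\ZZ/p^k\ZZ)^2$, and proves it is \emph{not} in the span of the nondegenerate-hyperplane indicators. So the map you propose does not land in the right space, and the rank inequality cannot be read off from it.

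The monotonicity is nevertheless true, but it needs a different (and still elementary) argument: restrict $W^*_{p^k,n}$ to the columns indexed by $x\in p^{k-j}R^n$. For a direction $b\in\PP R^{n-1}$ and $x=p^{k-j}x'$, one has $\langle p^{k-j}x',b\rangle=0$ in $R$ iff $\langle x',b\rangle\equiv 0\pmod{p^j}$, i.e.\ iff $\pi_j(x')\in H_{\pi_j(b)}$. Identifying $p^{k-j}R^n$ with $(\ZZ/p^j\ZZ)^n$ and noting $\pi_j$ is surjective on directions, this column-restricted submatrix is a row-repetition of $W^*_{p^j,n}$, hence has the same rank; being a submatrix of $W^*_{p^k,n}$, this gives $\mathrm{rank}(W^*_{p^j,n})\le\mathrm{rank}(W^*_{p^k,n})$. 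With this fix, your argument is complete.
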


\begin{proof}
Recall that the columns of $W_{p^k,n}$ are indexed by $b\in R^n$. Partition these columns by the sets 
\[ B_j = \{ b'\in R^n: b'=p^jb,\; b\neq0\bmod{p}\},\]
and let $W^{(j)}$ be the submatrix of $W_{p^k,n}$ consisting of columns indexed by $b'\in B_j$. Then 
\[ \text{rank}(W_{p^k,n})\leq\sum_{j=0}^k\text{rank}(W^{(j)}).\]
Note that the only vector in $B_k$ is the zero vector, and so $W^{(0)}$ is a just a column of all $1s$, which has rank $1$. Thus to prove the proposition, it suffices to show that for $j\in[k]$, we have $\text{rank}(W^{(j)})\leq\text{rank}(W_{p^{k-j},n})$. We show that this actually holds with equality. 

Let $j\in [k]$. The column of $W^{(j)}$ corresponding to $b'\in B_j$ is the indicator vector of $\{x\in R^n:\langle x,b'\rangle=0\bmod{p^k}\}$. Recalling that $b'=p^jb$ for a direction $b$, we have 
\begin{equation}
\label{hypequiv}
 \langle x,b'\rangle=0\bmod{p^k} \text{ \; if and only if \; } \langle x,b\rangle=0\bmod{p^{k-j}}.   
\end{equation}  
Notice that the latter equation only depends on $x\bmod{p^{k-j}}$; we will use this observation to partition the rows of $W^{(j)}$. 

For $\ell\in[k]$, let
$\overline{R}_{\ell}^n$ be the set of $x\in R^n$ so that for each $i\geq \ell$, the $i$-th $p$-adic digit of each component of $x$ is zero. Consider the sets
\[ X_u:= up^{k-j}+\overline{R}_{k-j}^n, \quad u\in\overline{R}_j^n. \]
Let $W^{(j)}_u$ be the submatrix of $W^{(j)}$ consisting of rows indexed by $x\in X_u$. By definition, for each $u$, the set $\{x\mod{p^{k-j}}:x\in X_u\}$ can be identified with $R^n_{k-j}$.
Similarly, the set $\{b:p^jb\in B_j\}$ can be identified with the set of directions of $R_{k-j}^n$. Combining these observations with the equivalence in~(\ref{hypequiv}), we see that $W_u^{(j)}$ is the same matrix as $W^*_{p^{k-j},n}$. As this is true for each $u$, the matrix $W^{(j)}$ is formed by vertically concatenating copies of $W^*_{p^{k-j},n}$. Thus it has the same rank as $W^*_{p^{k-j},n}$, as claimed.
\end{proof}

\begin{proposition}
\label{compareAW}
Let $n\in\NN$, $n\geq 2$. Then
\begin{equation}\label{compareAW-eq}
\emph{rank}(\cala_{p^k,n}^*)\leq\emph{rank}(W_{p^k,{n+1}}^*)
\leq 2 (k+1) \cdot\emph{rank}(\cala_{p^k,n}^*) .
\end{equation}

\end{proposition}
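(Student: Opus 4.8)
The plan is to realize the reduced point-affine hyperplane incidence matrix $\cala^*_{p^k,n}$ inside the reduced point-hyperplane incidence matrix $W^*_{p^k,n+1}$ by the standard trick of homogenizing: an affine hyperplane $H_b(a)$ in $R^n$ should correspond to a homogeneous hyperplane in $R^{n+1}$ after adjoining a new coordinate that records the value $\langle a,b\rangle$. Concretely, given a direction $b\in\PP R^{n-1}$ and a point $a\in R^n$, set $c=\langle a,b\rangle\in R$ and consider the direction $\tilde b=(b,-c)\in\PP R^{n}$ (note $\tilde b$ lies in $\mathbb{S}^{n}(R)$ since $b$ already has an invertible component); then for $(x,t)\in R^{n+1}$ we have $\langle (x,t),\tilde b\rangle = \langle x,b\rangle - tc$, which vanishes at $t=1$ exactly when $x\in H_b(a)$. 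Thus the row of $W^*_{p^k,n+1}$ indexed by $\tilde b=(b,-c)$, restricted to the columns $(x,1)$ with $x\in R^n$, is precisely the column of $\cala^*_{p^k,n}$ indexed by the pair $(a,b)$ — here I should transpose one of the matrices so the orientations match, but since rank is invariant under transpose this is harmless. This gives a submatrix of $W^*_{p^k,n+1}$ that, up to relabeling rows and columns, contains every column of $\cala^*_{p^k,n}$, yielding the left-hand inequality $\mathrm{rank}(\cala^*_{p^k,n})\leq\mathrm{rank}(W^*_{p^k,n+1})$.

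For the right-hand inequality I would argue in the reverse direction: I want to cover all the rows of $W^*_{p^k,n+1}$ (indexed by directions $\tilde b\in\PP R^{n}$) by a bounded number of blocks each of which has rank at most $\mathrm{rank}(\cala^*_{p^k,n})$. Write a direction in $R^{n+1}$ as $\tilde b=(b,c)$ with $b\in R^n$ and $c\in R$. There are essentially two regimes. First, if $b\in\mathbb{S}^{n-1}(R)$, i.e. $b$ itself already has an invertible component, then after scaling $b$ to a genuine direction in $\PP R^{n-1}$ (absorbing the unit into $c$), the homogeneous hyperplane $H_{\tilde b}$ in $R^{n+1}$ restricted to the slice $t=\tau$ is the affine hyperplane in $R^n$ of direction $b$ through any point $a$ with $\langle a,b\rangle = -\tau c$; as $\tau$ ranges over $R$ this decomposes the row of $W^*_{p^k,n+1}$ into $|R|=p^k$ pieces, but all of these pieces are rows coming from $\cala^*_{p^k,n}$, so the submatrix of $W^*_{p^k,n+1}$ formed by these directions has rank at most $\mathrm{rank}(\cala^*_{p^k,n})$ — no extra factor needed here beyond bookkeeping. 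The genuinely problematic directions are those with $b\notin\mathbb{S}^{n-1}(R)$, i.e. $p\mid b$ componentwise; then $\tilde b=(b,c)$ is a direction only because $c$ is a unit, and after scaling $c=1$ we may write $b=pb'$. The hyperplane condition $\langle x,pb'\rangle + t = 0$ forces $t\equiv 0\bmod p$, so on each slice $t=\tau$ with $p\mid\tau$ the set is $\{x:\langle x,b'\rangle \equiv -\tau/p \bmod p^{k-1}\}$, which only depends on $x\bmod p^{k-1}$; this is (a translate of) a hyperplane-type set in $R_{k-1}^n$, and by the same reduction used in the proof of Proposition \ref{W-times-2} its incidence data is governed by $\cala^*_{p^{k-1},n}$ — whose rank is at most that of $\cala^*_{p^k,n}$ (the $k-1$ case embeds in the $k$ case by the obvious inclusion $R_{k-1}\hookrightarrow R$, or one can simply induct). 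Iterating this descent through the $k$ scales $j=0,1,\dots,k-1$ accounts for the factor $k+1$, and the remaining factor of $2$ absorbs the split into the two regimes $b\in\mathbb{S}^{n-1}(R)$ versus $p\mid b$, together with the trivial contribution of the direction $\tilde b=(0,\dots,0,1)$ (whose hyperplane $\{t=0\}$ contributes rank at most $1$).

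I would organize the write-up as: (1) the homogenization map and the left inequality; (2) a lemma stating $\mathrm{rank}(\cala^*_{p^{j},n})\leq\mathrm{rank}(\cala^*_{p^{k},n})$ for $j\le k$, proved by the digit-truncation argument; (3) the decomposition of the rows of $W^*_{p^k,n+1}$ into $k+1$ families indexed by $v_p(b):=\min_i v_p(b_i)\in\{0,1,\dots,k-1\}$ together with one exceptional row, and within each family a further partition by the slice value $t=\tau$; (4) identifying each resulting block, after scaling the direction to a standard representative and truncating coordinates modulo $p^{k-v_p(b)}$, with a vertical stack of rows drawn from $\cala^*_{p^{k-v_p(b)},n}$, so that its rank is at most $\mathrm{rank}(\cala^*_{p^k,n})$; (5) summing up to get the factor $2(k+1)$. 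The main obstacle I anticipate is step (4): being careful that when $p\mid b$ the new coordinate $t$ is \emph{not} being truncated in the same way as the $x$-coordinates, so that one has to track two different moduli ($p^k$ for $t$ against $p^{k-v_p(b)}$ for $x$) and verify that after fixing the residue $t\bmod p^{v_p(b)}$ and dividing through, the surviving incidence pattern is genuinely an affine-hyperplane pattern in the smaller ring rather than something slightly different. A secondary subtlety is making sure the scaling of directions to canonical representatives (choosing the unit $\lambda$ in \eqref{equidirection}) is done consistently so that distinct rows of $W^*_{p^k,n+1}$ are not conflated, but this is routine once the representative set, analogous to the set $\mathcal D$ in the proof of Lemma \ref{ranksUnequalFor2}, is fixed.
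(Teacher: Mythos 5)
Your proof of the \emph{left} inequality is essentially the paper's: restricting $W^*_{p^k,n+1}$ to rows $(b,c)$ with $b$ a direction and to the column slice $\{x_{n+1}=1\}$ recovers (a matrix of the same rank as) $\cala^*_{p^k,n}$.

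For the \emph{right} inequality your approach is genuinely different from the paper's, and it contains a gap. You partition the \emph{rows} of $W^*_{p^k,n+1}$ by $v_p(b)$ and claim that the block of rows with $v_p(b)=0$ already has rank at most $\mathrm{rank}(\cala^*_{p^k,n})$, ``no extra factor needed.'' That claim is unjustified as stated, and in fact false. The justification offered — that each $t$-slice of this block consists of rows of $\cala^*_{p^k,n}$ — does not bound the rank of the whole block: a horizontal concatenation of matrices each of rank $\le r$ can have rank up to (number of blocks)$\cdot r$, and one needs an additional argument to see that the slices' row spaces are coherent across $t$. A concrete counterexample occurs already at $p=2$, $k=1$, $n=2$: there the block of rows with $b$ a direction in $R^2$ (six rows $(b,c)$ with $b\in\{(1,0),(0,1),(1,1)\}$, $c\in\{0,1\}$, on the $8$ columns $(x,t)\in R^3$) has rank $4$, while $\mathrm{rank}(\cala^*_{2,2})=\dim\calh^2=\binom{3}{2}=3$. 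So even the base case $j=0$ of your row partition already exceeds $\mathrm{rank}(\cala^*)$.

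The paper reverses the order of the two partitions, and this is essential. It first partitions the \emph{columns} by $j=v_p(x_{n+1})\in\{0,\dots,k\}$, giving $k+1$ column blocks, and then within each column block partitions the rows into just two pieces (those $\tilde b=(\widetilde b,b_{n+1})$ with $\widetilde b$ a direction, and the rest). Within each of the resulting $2(k+1)$ sub-blocks it proves rank $\le\mathrm{rank}(\cala^*_{p^k,n})$ by first observing that, once $v_p(x_{n+1})=j$ is fixed, the row entries depend only on $\widetilde b$ and the first $k-j$ digits of $b_{n+1}$ (so the rows can be deduplicated), and then using the key dilation identity \eqref{HbXjs}: the slice $\{x_{n+1}=sp^j\}$ for $s\in R^\times$ is exactly $s$ times the slice $\{x_{n+1}=p^j\}$, so any linear dependence among the affine-hyperplane indicators on the $p^j$-slice propagates to all $s p^j$-slices. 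Your sketch never invokes this dilation step; if you add it to your row-first scheme you would need, inside each row block $F_j$, a further column partition by $v_p(t)$, and since $F_j$ admits $k-j+1$ such column classes, the total would be of order $k^2\cdot\mathrm{rank}(\cala^*)$ rather than $2(k+1)\cdot\mathrm{rank}(\cala^*)$. In short: the column-first order is what allows the factor to stay linear in $k$, and the dilation lemma is the technical heart that you would need to state and prove in any version of this argument.
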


\begin{proof}
We write directions \(b\in R^{n+1}\) as  \(b=(\widetilde{b},b_{n+1})\), with \(\widetilde{b}\in R^{n}\). 
By a mild abuse of notation, we identify \(b\) with an element of \(\PP R^{n}\).  We use a similar convention for points $x\in R^{n+1}$.

We first prove that $\text{rank}(\cala_{p^k,n}^*)\leq\text{rank}(W_{p^k,{n+1}}^*)$. 
Any affine hyperplane in $R^n$ can be written as
\begin{equation}
\label{Htilde1}
\widetilde{H}_b=\left\{\widetilde{x}\in R^{n}:\langle \widetilde{x},\widetilde{b}\rangle =-b_{n+1}\right\},
\end{equation}
where $\widetilde{b}\in\PP  R^{n-1}$ is a direction, and $b_{n+1}\in R$. For any such $(\widetilde{b},b_{n+1})$, let
\begin{equation} 
\label{Htilde}
H_b=\left\{x=(\widetilde{x},x_{n+1})\in R^{n+1}:\langle \widetilde{x},\widetilde{b}\rangle+b_{n+1}x_{n+1}=0\right\}, 
\end{equation}
so that 
$\widetilde{H}_b\times \{1\}=H_b\cap\{x\in R^{n+1}:x_{n+1}=1\}$. 
Consider the submatrix of \(W_{p^k,n+1}^*\) obtained by restricting to rows indexed 
by \((\widetilde{b},b_{n+1})\in \mathcal{B}:=\PP R^{n-1}\times R\) and columns indexed by $x\in R^n\times\{1\}$. By the above correspondence, this submatrix is a copy of \(\mathcal{A}_{p^k,n}^*\), giving the desired bound.  

When considering the converse of this argument, it might be possible for a set of columns of the submatrix defined above to be linearly dependent even if the corresponding columns of the larger matrix \(W_{p^k,n}^*\) are linearly independent. We remedy this by considering linear independence on each scale separately.

For $j\in\{0,1,\dots,k\}$, let $X_j = \{x\in R^{n+1}:  x_{n+1}=p^jy, \; y\neq0\bmod{p}\}$.
Let $W^{(j)}$ be the submatrix of $W_{p^k,n+1}^*$ formed by restricting to the columns with $x\in X_j$. Then 
\[ \text{rank}(W^*_{p^k,n+1})\leq\sum_{j=0}^{k} \text{rank}(W^{(j)}).\]
We will show that $\text{rank}(W^{(j)})\leq 2\cdot\text{rank}(A_{p^k,n}^*)$ for each $j\in\{0,1,\dots,k\}$, implying
 the second bound in (\ref{compareAW-eq}).

Let $W^{(j)}_1$ be the submatrix formed by restricting to the rows indexed by $b\in \mathcal{B}$, and let $W^{(j)}_2$ be the submatrix consisting of the remaining rows. 
Clearly, $\text{rank}(W^{(j)})\leq \text{rank}(W^{(j)}_1)+\text{rank}(W^{(j)}_2)$.
It therefore suffices to prove that
\begin{equation}
\label{rankWj}
\text{rank}(W^{(j)}_i)\leq \text{rank}(\mathcal{A}_{p^k,n}^*)\hbox{ for }i=1,2.
\end{equation}
We first prove (\ref{rankWj}) for $i=1$. 
For each $b=(\widetilde{b},b^{n+1})  \in \mathcal{B}$, let 
$H_b=\{x\in R^{n+1}:\langle x,b\rangle=0\}$, and let
\[ 
\widetilde{H}_{b,j}=\{\widetilde{x}\in R^n: \langle \widetilde{x},\widetilde{b}\rangle=-p^jb_{n+1}\},
\]
so that $\widetilde{H}_{b,j}\times \{p^j\}=H_b\cap\{x\in R^{n+1}:x_{n+1}=p^j\}$.
We first note that
\begin{equation}\label{rankWj-e1}
\text{rank}(W_1^{(j)})\leq \text{dim}\left(\text{span}\{\one_{H_b\cap X_j}:b\in \mathcal{B}_j\}\right),
\end{equation}
where $\mathcal{B}_j=\{b\in \mathcal{B}:b_{n+1}\in[p^{k-j}]\}$. This is because, for $x\in X_j$, the value of $\one_{H_b}(x)$ is determined uniquely by $\widetilde{b}$ and the first $k-j$ digits in the $p$-adic expansion of $b_{n+1}$. Next, we prove that
\begin{equation}\label{rankWj-e2}
\text{dim}\left(\text{span}\{\one_{H_b\cap X_j}:b\in \mathcal{B}_j\}\right)
\leq 
\text{dim}\left(\text{span}\{\one_{\widetilde{H}_{b,j}}:b\in \mathcal{B}_j\}\right).
\end{equation}

For $j=k$, we have $X_k=\{(\tilde{x},0):\tilde{x}\in R^{n}\}$ and
$\mathcal{B}_k=\{(\tilde{b},0):\tilde{b}\in R^{n}\}$, so that for $b\in \mathcal{B}_k$ we have
$H_b\cap X_k=\widetilde{H}_{b,k}\times\{0\}$ and the claim is clear. 

We now assume that $j\leq k-1$.
Suppose that there are scalars $c_b$ so that 
\begin{equation}
\label{lincombHtilde}
\sum_{b\in \mathcal{B}_j}c_b\one_{\widetilde{H}_{b,j}}=0.
\end{equation}
We will show that $\sum_{b\in \mathcal{B}_j}c_b\one_{H_b\cap X_j}=0$
as well.  For $s\in [p^{k-j}]$, $s\neq0\bmod{p}$, define 
\[ X_{j,s}=\{x\in X_j: x_{n+1}=sp^j\}. \] 
First, we note that as $s$ is invertible, 
\begin{equation} 
\label{HbXjs}
H_b\cap X_{j,s}=\{(s\widetilde{x},sp^j): \widetilde{x}\in \widetilde{H}_{b,j}\} 
\end{equation}
and as the $X_{j,s}$ form a partition for $X_j$, 
\[ \one_{H_b\cap X_j}=\sum_s\one_{H_b\cap X_{j,s}}.\]
Then 
\begin{align*}
\sum_{b\in\mathcal{B}_j}c_b\one_{H_b\cap X_j}=\sum_{b\in\mathcal{B}_j}c_b\sum_{s}\one_{H_b\cap X_{j,s}}=\sum_{s}\left(\sum_{b\in\mathcal{B}_j}c_b\one_{H_b\cap X_{j,s}}\right)
\end{align*}
But each term in the outermost sum of the right-hand side of this equation is equal to zero, by~(\ref{HbXjs}) and~(\ref{lincombHtilde}). Thus $\sum_{b\in\mathcal{B}_j}c_b\one_{H_b\cap X_j}=0$, proving (\ref{rankWj-e2}).

Combining (\ref{rankWj-e1}) and (\ref{rankWj-e2}), we get
\[ \text{rank}(W_1^{(j)})\leq \text{dim}\left(\text{span}\{\one_{\widetilde{H}_{b,j}}(b):b\in B_j\}\right)\leq \text{rank}(\mathcal{A}_{p^k,n}^*). \]
as claimed.

To prove (\ref{rankWj}) for $i=2$, we observe that if $b=(\widetilde{b},b_{n+1})\not\in \mathcal{B}$, then $\widetilde{b}$ is not a direction in $R^n$, hence none of $b_1,\dots,b_n$ are invertible. Since $b$ is a direction in $R^{n+1}$, we have $b_{n+1}\in R^\times$, so that $b=(b_1,\widetilde{b})$ for a direction $\widetilde{b}\in R^n$. The desired bound follows by the same argument as above with the first and last coordinates interchanged.
\end{proof}


\section{The binomial phi functions}
\label{section-define-phi}

\subsection{Definitions}
In this section, we work in $R=\ZZ/p^k\ZZ$ and use the representatives \(R=\{0,1,2,\dots,p^k-1\}\).
Given two elements $x,y\in R$, we will write that $x< y$, $x\leq y$, etc.~if the stated inequality holds for the representatives of $x,y$ chosen above. 

\begin{definition}\label{def-phi-functions}
For $m\in[p^k]$, we define the functions $\phi_m:R\to \ZZ/p\ZZ$ by
\begin{equation}\label{def:phi}
 \phi_m(x)={x\choose m}\bmod{p},
 \end{equation}
with the convention that \({0\choose 0}=1\) and \({a\choose b}=0\) for \(a<b\).
We also define for all $x\in R$,
\begin{equation}\label{def:phi-2}
\phi_m(x)= 0 \hbox{ if }m<0\hbox{ or }m\geq p^k.
\end{equation}
\end{definition}

The binomial coefficients above are well defined by Lucas's Theorem, which we recall here for the reader's convenience.

\begin{theorem}[{\bf Lucas's Theorem}]\label{lucas}
Let $p$ be prime. Let $m,n$ be nonnegative integers with $p$-adic expansions $m=\sum_{j=0}^\ell m_j$ and $n=\sum_{j=0}^\ell n_jp^j$, where $m_j,n_j\in [p]$. Then, with the same convention as above, 
\[ {m\choose n}\equiv \prod_{j=0}^\ell{m_j\choose n_j}\bmod{p} .\]
\end{theorem}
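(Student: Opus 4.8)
The plan is to give the standard generating-function proof, carried out in the polynomial ring $\FF_p[x]$.

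First I would record the elementary congruence
\[
(1+x)^p \equiv 1+x^p \pmod{p}
\]
in $\FF_p[x]$, which holds because ${p\choose i}\equiv 0\bmod p$ for $1\leq i\leq p-1$ (the numerator $p!$ is divisible by $p$ while the denominator $i!(p-i)!$ is not). Raising this congruence to the $p$-th power repeatedly — equivalently, iterating the Frobenius endomorphism $a\mapsto a^p$ of $\FF_p[x]$, using $(1+x)^{p^{j+1}}=\left((1+x)^{p^j}\right)^p\equiv\left(1+x^{p^j}\right)^p\equiv 1+x^{p^{j+1}}$ — gives
\[
(1+x)^{p^j}\equiv 1+x^{p^j}\pmod{p}
\]
in $\FF_p[x]$ for every $j\geq 0$.

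Next, using the base-$p$ expansion $m=\sum_{j=0}^\ell m_jp^j$, I would write
\[
(1+x)^m=\prod_{j=0}^\ell\left((1+x)^{p^j}\right)^{m_j}\equiv \prod_{j=0}^\ell\left(1+x^{p^j}\right)^{m_j}\pmod{p},
\]
an identity in $\FF_p[x]$, and then compare the coefficients of $x^n$ on both sides. On the left, the coefficient of $x^n$ is ${m\choose n}\bmod p$. On the right, expanding each factor by the binomial theorem,
\[
\prod_{j=0}^\ell\left(1+x^{p^j}\right)^{m_j}=\prod_{j=0}^\ell\sum_{i_j=0}^{m_j}{m_j\choose i_j}x^{i_jp^j},
\]
so the coefficient of $x^n$ is $\sum\prod_{j=0}^\ell{m_j\choose i_j}$, the sum running over all tuples $(i_0,\dots,i_\ell)$ with $0\leq i_j\leq m_j$ for every $j$ and $\sum_{j=0}^\ell i_jp^j=n$. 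Since each such $i_j$ lies in $[p]$, the uniqueness of the base-$p$ representation of $n$ forces $i_j=n_j$ for all $j$: this is the only tuple that can contribute, and it contributes precisely when $n_j\leq m_j$ for every $j$, consistently with the convention ${m_j\choose n_j}=0$ when $n_j>m_j$. Hence the coefficient of $x^n$ on the right is $\prod_{j=0}^\ell{m_j\choose n_j}$, and equating the two coefficients yields the claim.

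There is no genuine obstacle here; the proof is short once $(1+x)^p\equiv 1+x^p$ is in hand. The only points that warrant a little care are (i) keeping track of the convention ${a\choose b}=0$ for $a<b$, so that the case $n_j>m_j$ for some $j$ — where both ${m\choose n}$ and the product on the right vanish modulo $p$ — is handled correctly, and (ii) the step invoking uniqueness of base-$p$ digits, which is what prevents the exponents $i_jp^j$ from ``carrying'' and is the reason the single tuple $(n_0,\dots,n_\ell)$ is the only contribution to the coefficient of $x^n$.
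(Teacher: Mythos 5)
Your proof is correct: this is the standard generating-function argument for Lucas's Theorem, and the two points you flag for care (the convention $\binom{a}{b}=0$ for $a<b$, and the uniqueness of base-$p$ digits preventing carries) are exactly the places where a sloppy write-up could go wrong. Note, however, that the paper does not prove this statement at all — it is recalled as a classical result and used as a black box (it is one of the standard tools the paper invokes for the binomial phi functions), so there is no in-paper argument to compare against; your proposal simply supplies the well-known proof that the authors chose to omit.
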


\begin{proposition}\label{binomial-phi} 
If \(x,y\in\ZZ_{\geq 0}\) satisfy \(x\equiv y\bmod{p^k}\), then \({x\choose m}\equiv {y\choose m}~\bmod{p}\).
Consequently, $\phi_m$ are well-defined as functions on $R$.
They satisfy the recurrence relations
\begin{equation}
\label{eq:phirec}
\begin{split}
&\phi_0(x)=1\hbox{ for all }x\in R, \quad \phi_m(0)=0\hbox{ for all }m\neq 0, \\
&  \phi_m(x+1)= \phi_m(x)+\phi_{m-1}(x)
\hbox{ for } m\in[p^k].
\end{split}
\end{equation}
Furthermore, if $m\in[p^k]$ and $x\in R$ have the $p$-adic expansions $m=\sum m_ip^i$ and $x=\sum x_ip^i$, then 
\begin{equation}\label{phi-digits}
\phi_m(x)=\prod_{i=0}^{k-1} \phi_{m_i}(x_i).
\end{equation} 
\end{proposition}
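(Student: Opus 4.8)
The plan is to establish the three assertions of Proposition~\ref{binomial-phi} in the order they are stated, using Lucas's Theorem as the main tool.

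\textbf{Well-definedness.} First I would show that if $x\equiv y\bmod{p^k}$ with $x,y\in\ZZ_{\geq 0}$, then $\binom{x}{m}\equiv\binom{y}{m}\bmod p$ for every $m\in[p^k]$. The point is that $m<p^k$, so the $p$-adic expansion of $m$ has digits $m_0,\dots,m_{k-1}$ occupying only the first $k$ positions (with $m_j=0$ for $j\geq k$). Writing the $p$-adic expansions $x=\sum_{j\geq 0}x_jp^j$ and $y=\sum_{j\geq 0}y_jp^j$, the hypothesis $x\equiv y\bmod{p^k}$ forces $x_j=y_j$ for $0\leq j\leq k-1$. By Lucas's Theorem, $\binom{x}{m}\equiv\prod_{j\geq 0}\binom{x_j}{m_j}\bmod p$; since $m_j=0$ for $j\geq k$ and $\binom{a}{0}=1$, the product reduces to $\prod_{j=0}^{k-1}\binom{x_j}{m_j}$, which depends only on $x_0,\dots,x_{k-1}$, hence equals the corresponding product for $y$. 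This shows $\phi_m$ descends to a well-defined function on $R$.

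\textbf{Recurrence relations.} The identities $\phi_0(x)=1$ (from $\binom{x}{0}=1$) and $\phi_m(0)=0$ for $m\neq 0$ (from $\binom{0}{m}=0$ when $m\geq 1$) are immediate from the definition and the stated conventions. For the Pascal-type recurrence $\phi_m(x+1)=\phi_m(x)+\phi_{m-1}(x)$ with $m\in[p^k]$: one must be slightly careful about the meaning of $x+1$ in $R$. I would lift $x$ to an integer representative in $\{0,\dots,p^k-1\}$. If $x<p^k-1$, then $x+1$ in $R$ coincides with the integer $x+1$, and the classical Pascal identity $\binom{x+1}{m}=\binom{x}{m}+\binom{x}{m-1}$ reduced mod $p$ gives the claim (the $m=0$ case holds trivially since $\phi_{-1}\equiv 0$ and both sides are $1$). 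If $x=p^k-1$, then $x+1=0$ in $R$, so the left side is $\phi_m(0)$, which is $1$ if $m=0$ and $0$ otherwise; on the right side, using Lucas, $\phi_m(p^k-1)=\prod_{j=0}^{k-1}\binom{p-1}{m_j}\equiv\prod_j(-1)^{m_j}\bmod p$ since $\binom{p-1}{i}\equiv(-1)^i$, and similarly for $\phi_{m-1}(p^k-1)$; a short check of the digit arithmetic of $m$ versus $m-1$ confirms the two terms sum to $\phi_m(0)$. (Alternatively, and more cleanly, one can use the well-definedness just proved to take any integer representative of $x$ and apply the classical Pascal identity directly, since $\binom{x+1}{m}\bmod p$ depends only on $x+1\bmod p^k$; I would phrase it this way to avoid the case split.)

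\textbf{Digit factorization \eqref{phi-digits}.} This is a direct consequence of Lucas's Theorem together with the well-definedness: pick the integer representative $x=\sum_{i=0}^{k-1}x_ip^i\in\{0,\dots,p^k-1\}$, apply Lucas to get $\binom{x}{m}\equiv\prod_{i=0}^{k-1}\binom{x_i}{m_i}\bmod p$, and observe that each factor $\binom{x_i}{m_i}\bmod p$ is exactly $\phi_{m_i}(x_i)$ since $x_i,m_i\in[p]\subseteq[p^k]$ and $\binom{a}{b}=0$ for $a<b$ matches the convention in Definition~\ref{def-phi-functions}. I expect the main (very minor) obstacle to be bookkeeping around the boundary case $x=p^k-1$ in the recurrence and making sure the conventions for $\binom{a}{b}$ with $a<b$ and for $\phi_m$ with $m\notin[p^k]$ are invoked consistently; everything else is a routine unwinding of Lucas's Theorem.
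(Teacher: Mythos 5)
Your proof is correct and follows essentially the same route as the paper: Lucas's Theorem for well-definedness and the digit factorization, and Pascal's identity for the recurrence. The only difference is that you are more careful about the meaning of $x+1$ in $R$ when lifting to integer representatives (a point the paper leaves implicit), and your observation that well-definedness lets one avoid the case split at $x=p^k-1$ is exactly the clean way to dispose of it.
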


\begin{proof}
The first conclusion is trivial when $m<0$ or $m\geq p^k$, since then $\phi_m(x)=0$ for all $x$. Assume now that $m\in[p^k]$ and that $x\equiv y$ mod $p^k$ for some \(x,y\in\ZZ_{\geq 0}\). Then the $p$-adic expansions $x=\sum x_jp^j$ and $y=\sum y_jp^j$ satisfy $x_j=y_j$ for $0\leq j\leq k-1$, and the conclusion follows from Lucas's Theorem.

Part (\ref{eq:phirec}) follows directly from (\ref{def:phi}), (\ref{def:phi-2}), and Pascal's identity for binomial coefficients. Finally, (\ref{phi-digits}) is Lucas's Theorem again.
\end{proof}

%
%

We will view the functions $\phi_m$ as ``generalized polynomials'' on $R$. 
For $m=0,1,\dots,p-1$, we will see that $\phi_m$ is in fact a polynomial of degree $m$ (Corollary \ref{n-dull} with $n=1$). We have $\phi_0(x)=1$ and $\phi_1(x)=x$ for all $x$, but $\phi_m$ with $2\leq m<p$ need not be either homogeneous or monic. For $m\geq p$, (\ref{def:phi}) still makes sense and defines additional functions that can be thought of as ``polynomial'' of degree $m$, for example in the sense of \cite{Leibman}.

Unlike for actual polynomials, there is no canonical choice of homogeneous generalized polynomials on $R^n$. For example, we could have defined $\phi_m(x):={x+m\choose m}$ instead of (\ref{def:phi}), and all our proofs would have been essentially the same with only slightly more complicated calculations. We further note that the recurrence relation  (\ref{eq:phirec}) could be used as an alternative (but equivalent) definition of phi functions.

For $k=1$, the polynomials $1,x,x^2,\dots,x^{p-1}$ are linearly independent functions on $\ZZ/p\ZZ$, therefore form a linear basis for the space of all functions on $\ZZ/p\ZZ$. We now prove that the same is true for the functions $\phi_m$ for general $k$.

\begin{lemma}
\label{lemma:phiLinInd} {\bf (Linear independence of $\phi_m$)}
Let \(\Phi\) be the \(p^k\times p^k\)  matrix with columns indexed by $x\in R$ and rows by $m\in [p^k]$, and with entries
\[ \Phi_{m,x}=\phi_m(x). \]
Then  \(\Phi\) is a nonsingular upper triangular matrix,
with $\Phi_{m,m}={m\choose m}=1$ and $\Phi_{m,x}=0$ for $x<m$. Consequently, 
 the functions $\{\phi_m\}_{m\in[p^k]}$ 
are linearly independent over \(\ZZ/p\ZZ\), and form a basis for the space of all functions from $R$ to $\ZZ/p\ZZ$.

\end{lemma}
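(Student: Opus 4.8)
The plan is to show directly that the matrix $\Phi$ is upper triangular with ones on the diagonal, which immediately yields that it is nonsingular, and hence that the $\phi_m$ are linearly independent; since there are exactly $p^k = |R|$ of them and the space of functions $R \to \ZZ/p\ZZ$ has dimension $p^k$, they form a basis. So the whole lemma reduces to examining the entries $\Phi_{m,x} = \phi_m(x) = \binom{x}{m} \bmod p$ under the ordering of $R$ by the chosen representatives $\{0,1,\dots,p^k-1\}$.

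First I would record that $\Phi_{m,m} = \binom{m}{m} = 1$ for every $m \in [p^k]$, which is immediate from the convention $\binom{0}{0}=1$ together with $\binom{a}{a}=1$ for $a \geq 0$; note this uses the integer binomial coefficient, and since $m \in [p^k]$ the representative of $m \in R$ is the integer $m$ itself, so there is no ambiguity. Next, for $x < m$ (again comparing the chosen integer representatives in $\{0,\dots,p^k-1\}$), the convention in Definition~\ref{def-phi-functions} gives $\binom{x}{m} = 0$, hence $\Phi_{m,x} = 0$. This is exactly the statement that $\Phi$ is upper triangular — entries below the diagonal, i.e. in positions $(m,x)$ with $x < m$, vanish — with all diagonal entries equal to $1$.

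From here the conclusions are formal. An upper triangular square matrix with nonzero diagonal entries has determinant equal to the product of its diagonal entries, which here is $1 \neq 0$ in $\ZZ/p\ZZ$; thus $\Phi$ is nonsingular over $\ZZ/p\ZZ$. Equivalently, its rows are linearly independent, and the rows are precisely the vectors $(\phi_m(x))_{x \in R}$, i.e. the functions $\phi_m$ viewed as elements of the $\ZZ/p\ZZ$-vector space of functions $R \to \ZZ/p\ZZ$. Since this space has dimension $|R| = p^k$ and we have exhibited $p^k$ linearly independent elements in it, the set $\{\phi_m\}_{m \in [p^k]}$ is a basis.

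I do not expect any serious obstacle here; the only point requiring a moment's care is keeping the bookkeeping straight between $R$ as a ring and $[p^k]$ as a set of integer representatives, so that the inequality $x < m$ and the binomial coefficient $\binom{x}{m}$ are both being read in $\ZZ$ via the fixed representatives — this is exactly the convention set up at the start of Section~\ref{section-define-phi} and in Definition~\ref{def-phi-functions}, so invoking it suffices. One could alternatively phrase the triangularity proof via the recurrence~(\ref{eq:phirec}) by induction on $x$, but the direct appeal to the $\binom{a}{b}=0$ for $a<b$ convention is cleaner.
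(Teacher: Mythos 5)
Your proof is correct and follows essentially the same route as the paper: show $\Phi$ is upper triangular with unit diagonal, deduce nonsingularity and hence linear independence of the rows, then conclude by dimension counting that $\{\phi_m\}$ is a basis. The only (very minor) difference is that the paper establishes $\Phi_{m,x}=0$ for $x<m$ by invoking Lucas's Theorem, whereas you observe this follows directly from the convention $\binom{a}{b}=0$ for $a<b$ in Definition~\ref{def-phi-functions}; your observation is the cleaner one here, as Lucas's Theorem is not actually needed for that step.
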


\begin{proof}
We clearly have $\Phi_{m,m}={m\choose m}=1$ for all $m\in[p^k]$. If $x,m\in[p^k]$ with $x<m$, then at least one $p$-adic digit of $x$ must be smaller than the corresponding $p$-adic digit of $m$, so that ${x\choose m}=0$ by Lucas's Theorem. It follows that $\Phi$ is an upper triangular matrix, nonsingular since all its diagonal entries are equal to 1.
Since the $m$-th row of $\Phi$ is the list of values of $\phi_m(x)$ as $x\in R$, the linear independence of the rows of $\Phi$ implies the linear independence of $\phi_m$ with $m\in[p^k]$. In particular, the linear span of $\{\phi_m\}_{m\in[p^k]}$ over $\ZZ/p\ZZ$ has dimension $p^k$. Since this is also the dimension of the the space of all functions from $R$ to $\ZZ/p\ZZ$, the last statement follows.
\end{proof}

\subsection{Properties of phi functions}

Vandermonde's Identity (\ref{phi-e1}) is the phi-function analogue of the binomial expansion of $(x+y)^m$. Unfortunately, the simple polynomial formula $(xy)^m=x^my^m$ has a far less transparent analogue (\ref{phi-e2}) for phi functions. A significant amount of work in the sequel will go towards studying the multiplicative properties of $\phi_m$.

\begin{lemma}\label{phiadditive} (Vandermonde's Identity)
For $m\in[p^k]$ and $x,y,b\in R$, we have 
\begin{equation}\label{phi-e1}
\phi_m(x+y)=\sum_{i=0}^m\phi_i(x)\phi_{m-i}(y),
\end{equation}
\begin{equation}\label{phi-e2}
\phi_m(bx)=\sum_{i_1+\dots+i_b=m}\phi_{i_1}(x)\dots \phi_{i_b}(x).
\end{equation}

\end{lemma}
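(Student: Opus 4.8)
The plan is to prove both identities by reducing them to the classical Vandermonde convolution for binomial coefficients, using Lucas's Theorem to control the reduction mod $p$. For \eqref{phi-e1}, fix representatives $x, y \in \{0,1,\dots,p^k-1\}$ and let $z = x+y$ as an integer, so $0 \le z \le 2(p^k-1)$. The classical Vandermonde identity gives $\binom{z}{m} = \sum_{i=0}^m \binom{x}{i}\binom{y}{m-i}$ in $\ZZ$. Reducing mod $p$ and applying Proposition \ref{binomial-phi} (specifically that $\binom{z}{m} \equiv \binom{z \bmod p^k}{m} \bmod p$), the left side becomes $\phi_m(x+y)$ where now $x+y$ is interpreted in $R$, and the right side becomes $\sum_{i=0}^m \phi_i(x)\phi_{m-i}(y)$. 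The only subtlety is that $z$ may exceed $p^k-1$, but this is exactly what Proposition \ref{binomial-phi} handles, so \eqref{phi-e1} follows immediately.

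For \eqref{phi-e2}, I would iterate \eqref{phi-e1}. Writing $bx = x + x + \dots + x$ ($b$ copies, with $b$ interpreted via its representative in $\{0,1,\dots,p^k-1\}$, which is harmless since $\phi_m$ is well-defined on $R$ and $bx$ in $R$ equals the $R$-reduction of $b$-fold integer addition), induction on $b$ using \eqref{phi-e1} gives
\begin{equation*}
\phi_m(bx) = \sum_{i_1 + \dots + i_b = m} \phi_{i_1}(x)\cdots\phi_{i_b}(x),
\end{equation*}
where the sum is over tuples of nonnegative integers summing to $m$. The base case $b=1$ is trivial, and the inductive step writes $\phi_m((b+1)x) = \phi_m(bx + x) = \sum_{j=0}^m \phi_j(bx)\phi_{m-j}(x)$, then substitutes the inductive hypothesis for $\phi_j(bx)$ and reindexes. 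One should note $b=0$ gives $\phi_m(0)$, consistent with the empty-sum convention ($\phi_m(0) = \delta_{m,0}$ matches the sum over the single empty tuple when $m=0$ and the empty sum when $m>0$), though the statement presumably intends $b \ge 1$.

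The main obstacle, such as it is, is bookkeeping around representatives: one must be careful that "$x+y$" and "$bx$" on the left-hand sides are computed in $R$ (mod $p^k$) while the binomial-coefficient manipulations happen in $\ZZ$ with the chosen representatives, and that these are reconciled precisely by the well-definedness statement in Proposition \ref{binomial-phi}. There is no deep difficulty here — the classical Vandermonde convolution does all the real work, and Proposition \ref{binomial-phi} bridges $\ZZ$ and $R$. An alternative, entirely self-contained route avoiding even the classical identity would be to induct on $y$ (resp. on $m$) using only the recurrence \eqref{eq:phirec}; I would mention this as a remark but carry out the Vandermonde-based argument as the main proof since it is shorter.
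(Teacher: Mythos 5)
Your proof is correct, but your main argument for \eqref{phi-e1} takes a genuinely different route from the paper's. You invoke the classical Vandermonde convolution $\binom{x+y}{m}=\sum_{i}\binom{x}{i}\binom{y}{m-i}$ over $\ZZ$ with integer representatives, then pass to $R$ via Proposition \ref{binomial-phi} (which in turn rests on Lucas's Theorem) to reconcile $x+y$ computed in $\ZZ$ with $x+y$ computed in $R$. The paper instead gives a self-contained induction on $y$: the base case $y=0$ is immediate, and the inductive step uses only the Pascal-type recurrence \eqref{eq:phirec}, never mentioning the classical identity or Lucas. The two are equally valid; your approach is shorter if one is willing to cite classical Vandermonde, while the paper's is more elementary and stays entirely inside the formalism built from \eqref{eq:phirec} (it also extends verbatim to other families defined by the same recurrence, a flexibility the authors allude to in the remark about the alternative normalization $\binom{x+m}{m}$). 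You correctly identified the one subtlety in your route --- that the integer sum $x+y$ can exceed $p^k-1$ --- and correctly resolved it via the well-definedness statement. For \eqref{phi-e2} your treatment (induction on $b$, iterating \eqref{phi-e1}) is the same as the paper's one-line ``by iterating \eqref{phi-e1},'' just spelled out; your remark that $b$ must be read via its representative in $[p^k]$ is a reasonable observation about the statement, consistent with the paper's conventions. Finally, the alternative self-contained proof you sketch in your closing remark is precisely the proof the paper actually gives.
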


\begin{proof}
Equation (\ref{phi-e1}) is known in the literature, but we include the short proof for completeness.
For $m=0$, the only pair $i,j$ with $i+j=m$ is $i=j=0$, and $\phi_0(x+y)=1=\phi_0(x)\phi_0(y)$. 
For $m=1,\dots,p^k-1$, we prove (\ref{phi-e1}) by induction in $y$. The formula is clearly true for $y=0$, since then the only nonzero term on the right side of (\ref{phi-e1}) is $\phi_m(x)\phi_0(0)=1$. Assume now that (\ref{phi-e1}) holds for some $y\in R$ and all $x\in R$. Then, by the inductive assumption, two applications of (\ref{eq:phirec}), and the convention that $\phi_{-1}=0$: 
\begin{align*}
\phi_m(x+(y+1))&=\phi_m((x+1)+y)=\sum_{i=0}^m\phi_i(x+1)\phi_{m-i}(y)\\
&= \sum_{i=0}^m \left( \phi_i(x)+\phi_{i-1}(x)\right) \phi_{m-i}(y)\\
&= \sum_{i=0}^m \phi_i(x)\left( \phi_{m-i-1}(y)+ \phi_{m-i}(y)\right) \\
&= \sum_{i=0}^m \phi_i(x)\phi_{m-i}(y+1) 
\end{align*}
as claimed.  The second identity (\ref{phi-e2}) follows by iterating  (\ref{phi-e1}).
\end{proof}

\begin{lemma}
\label{phi-higher-scales}
For $m\in[p^{k-j}]$ and $j\in\{1,\dots,k-1\}$, we have 
\begin{equation}\label{phi-higher}
\phi_{p^j m}(p^jx)=\phi_m(x).
\end{equation}
Additionally, $\phi_m(p^jx)=0$ if $p^j$ does not divide $m$.
\end{lemma}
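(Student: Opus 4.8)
The plan is to reduce everything to the digit factorization formula \eqref{phi-digits}, which already packages the multiplicative structure of $\phi_m$ in terms of $p$-adic digits. First I would write the $p$-adic expansions of the relevant numbers. For $m\in[p^{k-j}]$ with digits $m=\sum_{i=0}^{k-j-1}m_ip^i$, multiplication by $p^j$ simply shifts the digits up: $p^jm$ has $p$-adic digits equal to $0$ in positions $0,\dots,j-1$ and equal to $m_{i-j}$ in position $i$ for $j\le i\le k-1$. Similarly, for arbitrary $x\in R$ with digits $x=\sum_{i=0}^{k-1}x_ip^i$, the element $p^jx\in R$ has digits $0$ in positions $0,\dots,j-1$ and digit $x_{i-j}$ in position $i$ for $j\le i\le k-1$ (the digits $x_{k-j},\dots,x_{k-1}$ of $x$ are simply discarded modulo $p^k$). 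Since $\phi$ depends only on the residue mod $p^k$ by Proposition~\ref{binomial-phi}, this is exactly the representative computation we need.

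Then I would apply \eqref{phi-digits} to $\phi_{p^jm}(p^jx)$. The product $\prod_{i=0}^{k-1}\phi_{(p^jm)_i}((p^jx)_i)$ splits into the low block $i=0,\dots,j-1$ and the high block $i=j,\dots,k-1$. In the low block both digits are $0$, so each factor is $\phi_0(0)=1$. In the high block, the factor in position $i$ is $\phi_{m_{i-j}}(x_{i-j})$; reindexing $i'=i-j$ over $0,\dots,k-j-1$ turns the high block into $\prod_{i'=0}^{k-j-1}\phi_{m_{i'}}(x_{i'})$, and the digits in positions $k-j,\dots,k-1$ of $x$ never appear. Comparing with \eqref{phi-digits} applied to $\phi_m(x)$ — noting that $m\in[p^{k-j}]$ has zero digits in positions $k-j,\dots,k-1$, so those factors are $\phi_0(x_i)=1$ — gives $\phi_{p^jm}(p^jx)=\prod_{i'=0}^{k-j-1}\phi_{m_{i'}}(x_{i'})=\phi_m(x)$, proving \eqref{phi-higher}.

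For the last assertion, suppose $p^j\nmid m$. Then at least one of the digits $m_0,\dots,m_{j-1}$ of $m$ is nonzero; let $i_0<j$ be the smallest such index. On the other hand, $p^jx$ has digit $0$ in position $i_0$, so the factor $\phi_{m_{i_0}}((p^jx)_{i_0})=\phi_{m_{i_0}}(0)$ appears in \eqref{phi-digits} for $\phi_m(p^jx)$, and this equals $0$ since $m_{i_0}\neq 0$ (using $\phi_r(0)=0$ for $r\ne 0$ from \eqref{eq:phirec}). Hence the whole product vanishes, i.e.\ $\phi_m(p^jx)=0$.

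I do not anticipate a serious obstacle here: the only thing to be careful about is bookkeeping of digit positions — in particular that multiplication by $p^j$ in $R$ both shifts digits up by $j$ \emph{and} truncates the top $j$ digits, and checking that this truncation is harmless because the relevant index ranges ($m\in[p^{k-j}]$, and in \eqref{phi-higher} only the bottom $k-j$ digits of $x$ survive anyway) never involve those positions. A sanity check on the edge cases $m=0$ (both sides $1$) and $j=k-1$ is worth including to confirm the indexing.
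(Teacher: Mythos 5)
Your proposal is correct and follows exactly the route the paper takes: the paper's own proof is the one-line observation that the lemma is an immediate consequence of the digit factorization \eqref{phi-digits}, and your argument simply spells out the digit bookkeeping (shift by $j$, truncation of the top $j$ digits, and the vanishing factor $\phi_{m_{i_0}}(0)$ when $p^j\nmid m$) that makes that observation rigorous.
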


\begin{proof}
This is an immediate consequence of (\ref{phi-digits}).

\end{proof}


\section{Discrete derivatives}\label{discrete-derivatives}

\subsection{Definitions}
A generalized polynomial of degree $m$ is expected to vanish after the successive application of $m+1$ derivatives. We prove in Lemma \ref{lemma:Dm0} that this is true for our phi functions. We start by defining the {\em degree} of a function.

\begin{definition}
For $m\in[p^k]$, define
$\Omega_m:=\mathrm{span}\{\phi_\ell :0\leq \ell \leq m\}$.
We say that 
\begin{itemize}
\item $f$ has {\em degree at most $m$} if $f\in\Omega_m$, 
\item $f$ has {\em degree equal to $m$} if $f\in\Omega_m\setminus \Omega_{m-1}$,
\item two functions $f,g$ are {\em equal up to degree $\ell$} if $f-g\in\Omega_\ell$; we write this as $f=_\ell g$.
\end{itemize}
For convenience, we set $\Omega_m:=\{0\}$ for $m<0$, so that a function $f$ has negative degree if and only if $f$ is the zero function.
\end{definition}

\begin{definition}\label{def-differences} {\bf (Discrete derivatives)}
Let $f:R \rightarrow\ZZ/p\ZZ$. We define:
\begin{equation}\label{quotient}
\begin{split}
\Delta_c f(x) &:=f(x+c)-f(x) \hbox{ for }c\in R,
\\
D_{c}f(x)& :=c^{-1}\left(f(x+c)-f(x)\right)=c^{-1}\Delta_cf(x)\hbox{ for }c\in R^\times.
\end{split}
 \end{equation}
As per our convention for functions with values in $\ZZ/p\ZZ$, 
the factor $c^{-1}$ in (\ref{quotient}) is taken to mean $(c^{-1}$ mod $p)\in \ZZ/p\ZZ$. For short, we will also write
$$
Df=D_1 f = \Delta_1 f.
$$
\end{definition}

It follows from (\ref{eq:phirec}) that
\begin{equation}\label{phi-down}
\forall m\in[p^k],\ \ 
 D\phi_m=\phi_{m-1}.
 \end{equation}
By Lemma \ref{lemma:phiLinInd}, any function
$f:R \rightarrow\ZZ/p\ZZ$ has an expansion $f=\sum c_j\phi_j$. Applying (\ref{phi-down}), we get
\begin{equation}\label{phi-down-2} 
\forall m\in[p^k],\ \ 
f\in\Omega_m\Leftrightarrow Df\in\Omega_{m-1} .
\end{equation}

\begin{lemma}\label{D-consistent} 
Let $m\in[p^k]$ and $c\in R$. Then:
\begin{itemize}
\item[(i)] $\Delta_{c}\phi_m-c\phi_{m-1}\in\Omega_{m-2}$,
\item[(ii)] If $c\in R^\times$, then $D_{c}\phi_m-\phi_{m-1} \in\Omega_{m-2}$. 
\end{itemize}
Consequently, if $f\in\Omega_m$, then $\Delta_cf\in\Omega_{m-1}$ for all $c\in R$, and $D_c f\in\Omega_{m-1}$ for all $c\in R^\times$.
\end{lemma}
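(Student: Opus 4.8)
The plan is to prove (i) and (ii) first, and then deduce the concluding sentence by linearity and induction. For (i), the natural tool is Vandermonde's identity (\ref{phi-e1}) from Lemma \ref{phiadditive}. Applying it with the increment $c$ in place of $y$ gives
\[
\phi_m(x+c)=\sum_{i=0}^m\phi_i(x)\phi_{m-i}(c),
\]
so that
\[
\Delta_c\phi_m(x)=\phi_m(x+c)-\phi_m(x)=\sum_{i=0}^{m-1}\phi_i(x)\phi_{m-i}(c),
\]
using $\phi_0(c)=1$ to cancel the $i=m$ term. The $i=m-1$ term is $\phi_{m-1}(x)\phi_1(c)=c\,\phi_{m-1}(x)$ (here I would use that $\phi_1(c)=c$, interpreted mod $p$ per the paper's convention, which matches the factor $c$ in the statement). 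All remaining terms have $i\le m-2$, hence lie in $\Omega_{m-2}$ as functions of $x$. Therefore $\Delta_c\phi_m-c\phi_{m-1}=\sum_{i=0}^{m-2}\phi_{m-i}(c)\phi_i\in\Omega_{m-2}$, which is exactly (i). Part (ii) follows immediately: multiply the conclusion of (i) by the constant $c^{-1}\bmod p$, noting $\Omega_{m-2}$ is a $\ZZ/p\ZZ$-subspace, so $D_c\phi_m-\phi_{m-1}=c^{-1}(\Delta_c\phi_m-c\phi_{m-1})\in\Omega_{m-2}$.

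For the final ``consequently'' statement, it suffices by linearity to handle $f=\phi_m$ with $0\le m\le$ (the degree bound), and then extend to arbitrary $f\in\Omega_m$ by writing $f=\sum_{\ell=0}^m c_\ell\phi_\ell$ and using that each $\Delta_c$ and $D_c$ is $\ZZ/p\ZZ$-linear and that $\Omega_{\ell-1}\subseteq\Omega_{m-1}$ for $\ell\le m$. From (i), $\Delta_c\phi_m = c\,\phi_{m-1} + (\text{something in }\Omega_{m-2})\in\Omega_{m-1}$, since $\phi_{m-1}\in\Omega_{m-1}$ and $\Omega_{m-2}\subseteq\Omega_{m-1}$; this also covers the edge cases $m=0$ (where $\Delta_c\phi_0=0\in\Omega_{-1}$) and $m=1$ correctly given the conventions $\phi_{-1}=0$, $\Omega_{m}=\{0\}$ for $m<0$. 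The same computation with the extra factor $c^{-1}$ gives $D_c\phi_m\in\Omega_{m-1}$ when $c\in R^\times$.

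I do not anticipate a serious obstacle here; the only points requiring care are bookkeeping ones. First, one must be consistent with the paper's convention that scalars from $R$ appearing in front of $\ZZ/p\ZZ$-valued functions are to be read modulo $p$, so that ``$c\phi_{m-1}$'' in the statement means $(c\bmod p)\phi_{m-1}$ and $\phi_1(c)=c$ should be understood the same way — this makes the identification of the $i=m-1$ term in the Vandermonde expansion exact rather than merely ``up to lower order.'' Second, one should check that $\Omega_{m-2}$ really is closed under multiplication by the constant $c^{-1}$ and under addition, which is immediate since it is defined as a $\ZZ/p\ZZ$-span. Third, the degenerate cases (small $m$, or $m$ such that $\phi_{m-1}$ or $\phi_{m-2}$ is the zero function) should be noted to be consistent with the conventions $\phi_j=0$ for $j<0$ and $\Omega_j=\{0\}$ for $j<0$, so that no separate argument is needed.
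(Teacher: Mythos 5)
Your proposal is correct and takes essentially the same route as the paper: expand $\phi_m(x+c)$ via Vandermonde's identity (\ref{phi-e1}), cancel the $i=m$ term against $\phi_m(x)$ using $\phi_0(c)=1$, identify the $i=m-1$ term as $c\,\phi_{m-1}(x)$ via $\phi_1(c)=c$, and observe the remainder lies in $\Omega_{m-2}$; the paper handles $m=0$ as a separate trivial case while you note it is subsumed by the conventions, but the argument is otherwise identical.
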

\begin{proof}
If $m=0$, then $\Delta_c\phi_m=0$ for all $c\in R$ and the lemma is satisfied trivially. Assume now that $m>0$. By (\ref{phi-e1}), we have
\begin{align*} 
\phi_m(x+c)-\phi_m(x) & = \sum_{\ell=0}^{m}\phi_{m-\ell}(c)  \phi_{\ell}(x)-\phi_m(x)
\\
& =c\phi_{m-1}(x)+ \sum_{\ell=0}^{m-2}\phi_{m-\ell}(c)\phi_\ell(x),
\end{align*}
where we used that $\phi_0(c)=1$ and $\phi_1(c)=c$. This implies the lemma.
\end{proof}

\begin{lemma}
\label{lemma:Dm0}
For $m\in[p^k]$ and $f:R \rightarrow\ZZ/p\ZZ$, the following are equivalent:
\begin{itemize}
\item[(i)] $f\in\Omega_{m-1}$,
\item[(ii)] $D^m f=0$,
\item[(iii)] For any choice of $c_1,\dots,c_m\in R$ we have $\Delta_{c_m}\dots \Delta_{c_1}f= 0$.
\end{itemize}
\end{lemma}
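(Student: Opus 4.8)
The plan is to prove the chain of implications $(i)\Rightarrow(iii)\Rightarrow(ii)\Rightarrow(i)$. The implication $(iii)\Rightarrow(ii)$ is immediate: taking $c_1=\dots=c_m=1$ in (iii) gives $\Delta_1^m f = D^m f = 0$, since $D=\Delta_1$ by Definition \ref{def-differences}.

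For $(i)\Rightarrow(iii)$: assume $f\in\Omega_{m-1}$. By Lemma \ref{D-consistent}, applying $\Delta_c$ to any function in $\Omega_\ell$ produces a function in $\Omega_{\ell-1}$, for every $c\in R$. Iterating this $m$ times starting from $f\in\Omega_{m-1}$, the function $\Delta_{c_m}\cdots\Delta_{c_1}f$ lies in $\Omega_{-1}=\{0\}$, so it is the zero function. (Here I use the convention $\Omega_j=\{0\}$ for $j<0$ from the Definition preceding Lemma \ref{D-consistent}.)

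For $(ii)\Rightarrow(i)$: this is the only direction requiring a genuine argument, though it is still short. Write $f=\sum_{j\in[p^k]}c_j\phi_j$ using the basis from Lemma \ref{lemma:phiLinInd}. By (\ref{phi-down}), $D\phi_j=\phi_{j-1}$ for all $j\in[p^k]$ (with $\phi_{-1}=0$), so $D^m f = \sum_{j\ge m}c_j\phi_{j-m}$. If $D^m f=0$, then since the $\phi_\ell$ with $\ell\in[p^k]$ are linearly independent (Lemma \ref{lemma:phiLinInd}), we must have $c_j=0$ for all $j\ge m$; hence $f=\sum_{j<m}c_j\phi_j\in\Omega_{m-1}$. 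Alternatively, one can phrase this via (\ref{phi-down-2}): applying $f\in\Omega_\ell\Leftrightarrow Df\in\Omega_{\ell-1}$ a total of $m$ times gives $D^mf\in\Omega_{-1}=\{0\}$ iff $f\in\Omega_{m-1}$.

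\textbf{Main obstacle.} There is no serious obstacle here; the result is essentially a bookkeeping consequence of Lemmas \ref{lemma:phiLinInd} and \ref{D-consistent} together with (\ref{phi-down}). The only point requiring a little care is that $\Delta_c$ (for general $c\in R$, not just $c\in R^\times$) lowers degree by \emph{at least} one rather than acting as a clean shift $\phi_m\mapsto\phi_{m-1}$; but Lemma \ref{D-consistent} already packages exactly what is needed, namely that $f\in\Omega_m$ implies $\Delta_c f\in\Omega_{m-1}$ for \emph{every} $c$. So the proof amounts to citing that fact $m$ times for $(i)\Rightarrow(iii)$, and using the clean shift $D\phi_j=\phi_{j-1}$ plus linear independence for $(ii)\Rightarrow(i)$.
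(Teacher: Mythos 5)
Your proof is correct and follows essentially the same route as the paper: $(i)\Rightarrow(iii)$ by iterating Lemma \ref{D-consistent}, $(iii)\Rightarrow(ii)$ by specializing $c_i=1$, and $(ii)\Rightarrow(i)$ via the basis expansion from Lemma \ref{lemma:phiLinInd} together with $D\phi_j=\phi_{j-1}$. The only cosmetic difference is that the paper proves $(ii)\Rightarrow(i)$ by contrapositive (isolating the leading term $c\phi_\ell$ and noting it survives $D^m$), whereas you argue directly that all coefficients $c_j$ with $j\ge m$ must vanish; the underlying content is identical.
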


\begin{proof}
The implication (i) $\Rightarrow$ (iii) follows by iterating Lemma \ref{D-consistent} $m$ times and using that $\Omega_{-1}=\{0\}$. 
Clearly (iii) implies (ii), by letting $c_1=\dots=c_m=1$.  

To prove that (ii) implies (i), we argue by contrapositive. Assume that $f:R\rightarrow\ZZ/p\ZZ$ has degree exceeding $m-1$. Then there is some $\ell\geq m$, a non-zero constant $c$, and some function $g$ of degree at most $\ell-1$ so that $f = c\phi_\ell+g.$ By (\ref{phi-down}), we have
\[ D^mf = c\phi_{\ell-m}+D^mg. \]
Since $D^mg\in\Omega_{\ell-1-m}$ and $c\neq 0$, it follows from linear independence of the phi functions that $D^mf$ is not the zero function.
\end{proof}

\subsection{More properties of phi functions}
\label{subsec-more-prop}

\begin{lemma}\label{digits-lemma}
Let $f:R\to\ZZ/p\ZZ$ be a function, and let $x=\sum_{j=0}^{k-1}x_jp^j$ be the $p$-adic expansion of the variable $x\in R$. Let $\ell\in\{0,1,\dots,k-1\}$
Then:

(i) $f\in\Omega_{p^\ell-1}$ if and only if 
$f(x)$ can be written as a function of the first $\ell$ digits of $x$, so that $f(x)=g(x_0,x_1,\dots,x_{\ell-1})$ for some $g:(\ZZ/p\ZZ)^\ell\to \ZZ/p\ZZ$;

(ii) $f(x) $ depends only on  $x_\ell$ (that is, $f(x)=g(x_\ell)$ for some function $g$) 
if and only if $f\in{\rm span}\{\phi_0,\phi_{p^\ell},\phi_{2p^\ell},\dots,\phi_{(p-1)p^\ell}\}$.

(iii) if $f(x)=g(x_\ell)$ for a function $g$ of degree $m\in[p]$, then $f\in{\rm span}\{\phi_0,\phi_{p^\ell},\phi_{2p^\ell},\dots,\phi_{mp^\ell}\}$.
\end{lemma}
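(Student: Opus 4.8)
\textbf{Plan of proof for Lemma \ref{digits-lemma}.}

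The natural strategy is to prove (i) first, then deduce (ii) and (iii) from it. For part (i), I would argue both directions using Lemma \ref{lemma:phiLinInd} and the multiplicative formula (\ref{phi-digits}). For the ``only if'' direction: if $f \in \Omega_{p^\ell - 1}$, then $f = \sum_{m=0}^{p^\ell-1} c_m \phi_m$, and for each such $m$ the $p$-adic expansion $m = \sum m_i p^i$ has $m_i = 0$ for all $i \geq \ell$; by (\ref{phi-digits}), $\phi_m(x) = \prod_{i=0}^{\ell-1} \phi_{m_i}(x_i)$ depends only on $x_0, \dots, x_{\ell-1}$, so the same is true of $f$. For the ``if'' direction, one can count dimensions: the space of functions $g : (\ZZ/p\ZZ)^\ell \to \ZZ/p\ZZ$, viewed as functions of $x$ via $x \mapsto (x_0,\dots,x_{\ell-1})$, has dimension $p^\ell$; the functions $\{\phi_m : m \in [p^\ell]\}$ are linearly independent (Lemma \ref{lemma:phiLinInd}) and, by the ``only if'' direction just proved, all lie in this space; since there are exactly $p^\ell$ of them, they form a basis, so every such $g$ lies in $\Omega_{p^\ell - 1}$.

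For part (ii), the ``if'' direction is immediate from (\ref{phi-digits}): if $m = j p^\ell$ with $j \in [p]$, then $\phi_m(x) = \phi_j(x_\ell)$ depends only on $x_\ell$ (all other digit-factors being $\phi_0 = 1$), so any linear combination of $\phi_0, \phi_{p^\ell}, \dots, \phi_{(p-1)p^\ell}$ depends only on $x_\ell$. Conversely, suppose $f(x) = g(x_\ell)$. The map $x_\ell \mapsto \phi_j(x_\ell)$ for $j = 0, 1, \dots, p-1$ gives $p$ linearly independent functions on $\ZZ/p\ZZ$ (again by Lemma \ref{lemma:phiLinInd} applied with $k=1$, or directly since $\phi_j$ restricted to $[p]$ is a degree-$j$ polynomial), hence a basis for all functions of the single digit $x_\ell$; so $g(x_\ell) = \sum_{j=0}^{p-1} c_j \phi_j(x_\ell) = \sum_{j=0}^{p-1} c_j \phi_{jp^\ell}(x)$, which is the desired membership.

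Part (iii) refines the converse in (ii): if $g$ has degree $m \in [p]$ as a function of $x_\ell \in \ZZ/p\ZZ$, then in the expansion $g = \sum_{j=0}^{p-1} c_j \phi_j$ (over $\ZZ/p\ZZ$, using that $\Omega_{p-1}$ is all functions on $\ZZ/p\ZZ$), the condition $g \in \Omega_m$ forces $c_j = 0$ for $j > m$ by linear independence of the $\phi_j$; substituting $\phi_j(x_\ell) = \phi_{jp^\ell}(x)$ gives $f \in {\rm span}\{\phi_0, \phi_{p^\ell}, \dots, \phi_{mp^\ell}\}$. I expect no serious obstacle here: the only point requiring a little care is the dimension-counting step in (i), where one must be sure that ``depends only on the first $\ell$ digits'' really does cut out a subspace of dimension exactly $p^\ell$ and that the $\phi_m$ with $m \in [p^\ell]$ are genuinely linearly independent \emph{as functions of $x$} — but this is exactly the content of Lemma \ref{lemma:phiLinInd}, so it is already in hand.
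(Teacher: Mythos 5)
Your proof is correct and follows essentially the same route as the paper: for (i) the forward direction uses (\ref{phi-digits}), and the converse is the same dimension count ($p^\ell$ linearly independent $\phi_m$'s inside the $p^\ell$-dimensional space of functions of $x_0,\dots,x_{\ell-1}$); for (iii) the paper verifies $\phi_j(x_\ell)=\phi_{jp^\ell}(x)$ via Lucas and invokes linearity, which is what you do after expanding $g$ in the $\phi_j$ basis. The only cosmetic difference is that you spell out (ii) in full where the paper simply says it is "similar."
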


\begin{proof} 
We prove (i), the proof of (ii) being similar.
Suppose that $f$ has degree at most $p^\ell-1$. Then $f$ is a linear combination of functions $\phi_m(x)$ with $m\leq p^\ell-1$, so that $m_i=0$ for all $i\geq \ell$. By (\ref{phi-digits}), $f$ depends only on $x_0,x_1,\dots,x_{\ell-1}$.

To prove the converse implication, we use dimension counting. There are $p^\ell$ functions $\phi_m$ with $m\leq p^\ell-1$, all linearly independent, so that $\Omega_{p^\ell-1}$ has dimension $p^\ell$.
On the other hand, the space of all functions of $x_0,x_1,\dots,x_{\ell-1}$ also has dimension $p^\ell$, since that is the number of all $\ell$-tuples $(x_0,x_1,\dots,x_{\ell-1})\in(\ZZ/p\ZZ)^\ell$. This proves (i).

For (iii), assume that $g=\phi_j$ for some $j\leq m\leq p-1$. Then 
\[ f(x) = g(x_\ell)={x_\ell\choose j} = {x \choose jp^\ell }=\phi_{jp^\ell}(x) \]
by the definition of the phi functions and by Lucas' theorem, and (iii) follows. 
\end{proof}

\begin{lemma}
\label{cor:phiprod}
Let $\ell,m\in\ZZ_{\geq0}$. Then $\phi_\ell\cdot\phi_m\in\Omega_{\ell+m}$, with
\begin{equation}\label{e-phiprod}
\phi_\ell\cdot\phi_m=_{\ell+m-1}{\ell+m\choose \ell }\phi_{\ell+m}. 
\end{equation}

\end{lemma}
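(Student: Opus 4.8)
The plan is to prove \eqref{e-phiprod} by induction, using the recurrence relation \eqref{eq:phirec} together with the degree bookkeeping afforded by Lemma \ref{D-consistent}. First I would dispose of the base cases: if $\ell=0$ then $\phi_0\cdot\phi_m=\phi_m$ and ${m\choose 0}=1$, so the claim holds with both sides literally equal (not just equal up to degree $\ell+m-1$); the case $m=0$ is symmetric. I would also note that if $\ell+m\geq p^k$ then $\phi_{\ell+m}=0$ by convention, and in that regime the statement should be read as asserting $\phi_\ell\cdot\phi_m\in\Omega_{\ell+m-1}$, which follows from Lemma \ref{D-consistent} applied appropriately (or, more cleanly, one restricts attention to $\ell+m\in[p^k]$ and handles the degenerate range separately; I expect the paper only needs $\ell+m\in[p^k]$ in its applications).

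For the inductive step I would fix $\ell,m\geq 1$ and assume \eqref{e-phiprod} holds for all smaller values of $\ell+m$. The natural move is to apply the discrete derivative $D=D_1$: by \eqref{phi-down} and the product rule for $D$ (which reads $D(fg)(x)=f(x+1)Dg(x)+Df(x)\,g(x)$, or symmetrically), one gets
\begin{align*}
D(\phi_\ell\cdot\phi_m)(x)
&=\phi_\ell(x+1)\phi_{m-1}(x)+\phi_{\ell-1}(x)\phi_m(x)\\
&=\bigl(\phi_\ell(x)+\phi_{\ell-1}(x)\bigr)\phi_{m-1}(x)+\phi_{\ell-1}(x)\phi_m(x)\\
&=\phi_\ell\cdot\phi_{m-1}(x)+\phi_{\ell-1}\cdot\phi_{m-1}(x)+\phi_{\ell-1}\cdot\phi_m(x),
\end{align*}
where the middle step uses \eqref{eq:phirec}. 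By the inductive hypothesis each of the three products on the right equals the corresponding binomial multiple of a single $\phi$ up to lower degree: $\phi_\ell\phi_{m-1}=_{\ell+m-2}{\ell+m-1\choose \ell}\phi_{\ell+m-1}$, $\phi_{\ell-1}\phi_m=_{\ell+m-2}{\ell+m-1\choose \ell-1}\phi_{\ell+m-1}$, and $\phi_{\ell-1}\phi_{m-1}\in\Omega_{\ell+m-2}$. Using Pascal's identity ${\ell+m-1\choose \ell}+{\ell+m-1\choose \ell-1}={\ell+m\choose \ell}$, we conclude $D(\phi_\ell\cdot\phi_m)=_{\ell+m-2}{\ell+m\choose \ell}\phi_{\ell+m-1}=D\bigl({\ell+m\choose \ell}\phi_{\ell+m}\bigr)$.

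Finally I would lift this back from $D(\phi_\ell\cdot\phi_m)$ to $\phi_\ell\cdot\phi_m$ itself. From $D\bigl(\phi_\ell\cdot\phi_m-{\ell+m\choose \ell}\phi_{\ell+m}\bigr)\in\Omega_{\ell+m-2}$ and \eqref{phi-down-2}, the function $\phi_\ell\cdot\phi_m-{\ell+m\choose \ell}\phi_{\ell+m}$ lies in $\Omega_{\ell+m-1}$, which is exactly \eqref{e-phiprod}; this simultaneously gives $\phi_\ell\cdot\phi_m\in\Omega_{\ell+m}$. The one genuine subtlety — the main obstacle — is the interaction with the convention $\phi_j=0$ for $j\geq p^k$: when $\ell+m$ is near $p^k$ one must check that the $D$-based argument doesn't secretly use $\phi_{\ell+m}$ being nonzero, and that the inductive hypothesis is invoked only at admissible indices. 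I expect this is handled either by reducing at the outset to $\ell+m\in[p^k]$ (the only case needed downstream) or by observing that every identity above is an identity of functions into $\ZZ/p\ZZ$ that remains valid when the relevant $\phi_j$ is interpreted as $0$, since then the asserted binomial coefficient ${\ell+m\choose\ell}$ appears only in front of the zero function and the content of the statement collapses to the degree bound, already supplied by Lemma \ref{D-consistent}.
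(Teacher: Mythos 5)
Your proposal is correct and matches the paper's proof almost line for line: both induct on $\ell+m$, apply the discrete product rule for $D$ together with the recurrence $\phi_j(x+1)=\phi_j(x)+\phi_{j-1}(x)$ to obtain the decomposition $\phi_\ell\phi_{m-1}+\phi_{\ell-1}\phi_{m-1}+\phi_{\ell-1}\phi_m$, invoke the inductive hypothesis on each of the three terms, combine via Pascal's identity, and then lift from $D$-level back to the original functions using (\ref{phi-down}) and (\ref{phi-down-2}). The only cosmetic differences are that you peel off the base cases $\ell=0$, $m=0$ explicitly while the paper lets the $\phi_{-1}=0$ convention absorb them, and that you flag the $\ell+m\geq p^k$ edge case, which the paper leaves implicit.
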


We emphasize that $\phi_\ell\cdot\phi_m$ has degree \textit{at most} $\ell+m$ but not necessarily equal to it, since the coefficient of $\phi_{\ell+m}$ in (\ref{e-phiprod}) could be zero. For example, if $\ell,m\leq p^j-1$ for some $j<k$, then, by Lemma \ref{digits-lemma} (i), both $\phi_\ell(x)$ and $\phi_m(x)$ depend only on the first $j$ $p$-adic digits of $x$. Therefore so does $\phi_\ell(x)\phi_m(x)$. By Lemma \ref{digits-lemma} (i) again, $\phi_\ell\phi_m$ also has degree at most $p^j-1$, even if $\ell+m\geq p^j$.

\begin{proof}[Proof of Lemma \ref{cor:phiprod}]
We prove (\ref{e-phiprod}) by induction on $K:=\ell+m$. If $K=0$, then $\ell=m=0$ and the formula is immediate.  
Assume now that the formula is true for all $\ell,m$ with $\ell+m=K$ for some $K\geq 0$, and consider the case $\ell+m=K+1$.
Then, by (\ref{eq:phirec}) and the inductive assumption,
\begin{align*}
D(\phi_\ell\cdot\phi_m) (x)= &= \phi_\ell(x+1)\phi_m(x+1)-\phi_\ell(x)\phi_m(x)
\\
&=  (\phi_\ell(x+1)-\phi_\ell(x)) \phi_m(x+1)+\phi_\ell(x)(\phi_m(x+1)-\phi_m(x))
\\
&= \phi_{\ell-1}(x)(\phi_m(x)+\phi_{m-1}(x))+ \phi_\ell(x)\phi_{m-1}(x)
\\
& =_{\ell+m-2} \left[{\textstyle{\ell+m-1\choose \ell-1}}
+{\textstyle{\ell+m-1\choose \ell}}\right]\phi_{\ell+m-1}
\\
&= {\textstyle{\ell+m\choose \ell}}\phi_{\ell+m-1},
\end{align*}
where at the last step we used Pascal's identity. On the other hand, by (\ref{phi-down}) we also have 
$D\left({\textstyle{\ell+m\choose \ell}}\phi_{\ell+m}\right)= {\textstyle{\ell+m\choose \ell}}\phi_{\ell+m-1}$. Hence 
$$D\left(\phi_\ell\cdot\phi_m-{\textstyle{\ell+m\choose \ell }}\phi_{\ell+m}\right)
\in\Omega_{\ell+m-2},
$$
 and (\ref{e-phiprod}) follows from (\ref{phi-down-2}).
\end{proof}

\begin{lemma}
\label{bxformula}
Let $\phi_m:R\rightarrow\ZZ/p\ZZ$ and $b\in R^\times$. Then 
\[ \phi_m(bx)=_{m-1}b^m\phi_m(x)\]

\end{lemma}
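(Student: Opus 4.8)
The plan is to prove $\phi_m(bx) =_{m-1} b^m \phi_m(x)$ by induction on $m$, using the product formula from Lemma \ref{cor:phiprod} together with Vandermonde's identity (\ref{phi-e2}).

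\textbf{Approach.} The identity (\ref{phi-e2}) expands $\phi_m(bx)$ as a sum over compositions $i_1+\dots+i_b=m$ of products $\phi_{i_1}(x)\cdots\phi_{i_b}(x)$. By iterating Lemma \ref{cor:phiprod}, each such product lies in $\Omega_m$, and its component in degree exactly $m$ — that is, its class modulo $\Omega_{m-1}$ — is $\binom{m}{i_1,i_2,\dots,i_b}\phi_m(x)$, where the multinomial coefficient arises from composing the binomial coefficients $\binom{i_1+i_2}{i_1}\binom{i_1+i_2+i_3}{i_1+i_2}\cdots$ appearing in (\ref{e-phiprod}). Summing over all compositions of $m$ into $b$ nonnegative parts, the coefficient of $\phi_m(x)$ modulo $\Omega_{m-1}$ becomes $\sum_{i_1+\dots+i_b=m}\binom{m}{i_1,\dots,i_b} = b^m$ by the multinomial theorem. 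All the lower-order terms coming from the products $\phi_{i_1}(x)\cdots\phi_{i_b}(x)$ are absorbed into $\Omega_{m-1}$, which is exactly what $=_{m-1}$ allows us to discard.

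\textbf{Steps in order.} First, establish the multinomial identity $\phi_{i_1}\cdots\phi_{i_b} =_{m-1} \binom{m}{i_1,\dots,i_b}\phi_m$ whenever $i_1+\dots+i_b=m$; this follows by an easy induction on $b$ from (\ref{e-phiprod}), using that $=_{m-1}$ is preserved under multiplication by $\phi_j$ modulo the appropriate degree shift. Second, apply (\ref{phi-e2}) to write $\phi_m(bx)$ as the sum of these products. Third, reduce modulo $\Omega_{m-1}$ and collect the coefficient of $\phi_m(x)$, invoking the multinomial theorem $\sum \binom{m}{i_1,\dots,i_b} = b^m$. One should also handle the trivial base cases $m=0$ (both sides equal $1$) and note that the argument is uniform in $b\in R^\times$ — indeed it works for any $b\in R$, but invertibility is what makes the statement nontrivial and consistent with Lemma \ref{bxformula}'s later use.

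\textbf{Main obstacle.} The one point requiring care is the bookkeeping of which degrees are being discarded: when multiplying $\phi_{i_1}(x)\cdots\phi_{i_{b-1}}(x) =_{i_1+\dots+i_{b-1}-1} (\text{coeff})\,\phi_{i_1+\dots+i_{b-1}}$ by $\phi_{i_b}$, one must check that the error term, which a priori lies in $\Omega_{i_1+\dots+i_{b-1}-1}\cdot\phi_{i_b} \subseteq \Omega_{m-1}$ by Lemma \ref{cor:phiprod}, indeed stays within $\Omega_{m-1}$ and does not contaminate the degree-$m$ coefficient. This is routine but is the place where the induction could go wrong if the degree shifts are mishandled; once it is set up correctly, the multinomial theorem does the rest.
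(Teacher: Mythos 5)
Your proof is correct, but it takes a genuinely different route from the paper's. The paper proves Lemma \ref{bxformula} by induction on $m$: writing $\phi_m^b(x)=\phi_m(bx)$, it computes $D\phi_{\ell+1}^b$ via Vandermonde's identity (\ref{phi-e1}), invokes the inductive hypothesis to get $D\phi_{\ell+1}^b =_{\ell-1} b^{\ell+1}\phi_\ell$, matches this against $D(b^{\ell+1}\phi_{\ell+1})$, and closes with (\ref{phi-down-2}). You instead go through (\ref{phi-e2}) and Lemma \ref{cor:phiprod}: expanding $\phi_m(bx)$ over compositions $i_1+\cdots+i_b=m$, iterating (\ref{e-phiprod}) to get the multinomial coefficient as the top-degree contribution of each product $\phi_{i_1}\cdots\phi_{i_b}$, and then summing via the multinomial theorem $\sum\binom{m}{i_1,\dots,i_b}=b^m$. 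Your degree bookkeeping is handled correctly — the error term from each partial product lies in $\Omega_{s-1}$ with $s=i_1+\cdots+i_{b-1}$, and multiplying by $\phi_{i_b}$ lands it in $\Omega_{s-1+i_b}=\Omega_{m-1}$ by Lemma \ref{cor:phiprod} — and there is no circularity, since \ref{cor:phiprod} is proved independently and earlier. The trade-off is that your argument identifies the leading coefficient $b^m$ explicitly and algebraically in one pass, at the cost of invoking the multinomial theorem and a nested induction on the number of factors, whereas the paper's derivative-based induction is shorter and stays entirely within the $D$-calculus developed in Section \ref{discrete-derivatives}. You are also right that both arguments actually work for arbitrary $b\in R$, not just $b\in R^\times$; the restriction in the lemma statement is harmless since the $b=0$ and non-unit cases are not needed later.
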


\begin{proof}

We induct on $m$. The case $m=0$ is immediate, since $\phi_0$ is a constant function. 
Assume now that the result holds for $m\leq\ell$. We consider $m=\ell+1$. Let $\phi_m^b$ be the function defined by $\phi_m^b(x)=\phi_m(bx)$. Then by (\ref{phi-e1}),
\begin{align*}
D\phi_{\ell+1}^b(x)
&=\phi_{\ell+1}(bx+b)-\phi_{\ell+1}(bx)
\\
&=\sum_{j=0}^\ell \phi_{\ell+1-j}(b)\phi_j(bx) 
\\
&=_{\ell-1}b\phi_\ell(bx),
\end{align*}
where at the last step we used that $\phi_1(b)=b$. 
By the inductive hypothesis with $m=\ell$, we have
\[ 
D\phi_{\ell+1}^b(x)
=_{\ell-1}b^{\ell+1}\phi_\ell(x).\]
But we also have $D(b^{\ell+1}\phi_{\ell+1})(x)=b^{\ell+1}\phi_\ell(x)$ by (\ref{phi-down}),
so that 
$$D(b^{\ell+1}\phi_{\ell+1}-\phi_{\ell+1}^b)\in\Omega_{\ell-1}.$$
The inductive step follows from this and (\ref{phi-down-2}).
\end{proof}

The next lemma is a phi-function analogue of the fact that the coefficients of a polynomial can be computed by evaluating its derivatives at $0$.

\begin{lemma}\label{deriv-at-0}
Suppose that $f:R\to\ZZ/p\ZZ$ has the representation $f=\sum_{j=0}^{p^k-1}c_j\phi_j$. Then
\begin{equation}\label{deriv-at-0-eq}
c_\ell=D^\ell f(0)\hbox{ for all } \ell\in [p^k].
\end{equation}
\end{lemma}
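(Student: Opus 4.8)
The plan is to use the downward-shift property (\ref{phi-down}), namely $D\phi_m = \phi_{m-1}$ for all $m\in[p^k]$, together with the base-point values $\phi_m(0)$ recorded in (\ref{eq:phirec}). First I would observe that applying $D$ to the expansion $f=\sum_{j=0}^{p^k-1}c_j\phi_j$ a total of $\ell$ times and using (\ref{phi-down}) repeatedly gives
\begin{equation}\label{Dellf-expansion}
D^\ell f = \sum_{j=0}^{p^k-1} c_j D^\ell\phi_j = \sum_{j=\ell}^{p^k-1} c_j \phi_{j-\ell},
\end{equation}
where the terms with $j<\ell$ drop out because $D^\ell\phi_j=\phi_{j-\ell}=0$ once $j-\ell<0$, by the convention in (\ref{def:phi-2}). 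Re-indexing the remaining sum with $i=j-\ell$, this reads $D^\ell f = \sum_{i=0}^{p^k-1-\ell} c_{i+\ell}\phi_i$.

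Next I would evaluate (\ref{Dellf-expansion}) at $x=0$. By (\ref{eq:phirec}) we have $\phi_0(0)=1$ and $\phi_i(0)=0$ for all $i\neq 0$, so every term in the sum $\sum_{i\geq 0} c_{i+\ell}\phi_i(0)$ vanishes except the $i=0$ term, leaving $D^\ell f(0) = c_\ell$. This is exactly (\ref{deriv-at-0-eq}).

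There is essentially no obstacle here: the lemma is a direct bookkeeping consequence of the two facts already established, namely the action of $D$ on the basis $\{\phi_m\}$ and the values of the $\phi_m$ at the origin. The only point requiring a word of care is the boundary behavior — making sure that when $j<\ell$ the iterated derivative $D^\ell\phi_j$ is genuinely the zero function, which is why the convention (\ref{def:phi-2}) extending $\phi_m$ by $0$ for $m<0$ is invoked — but this is immediate from (\ref{phi-down}) applied inductively. One could alternatively phrase the argument by induction on $\ell$: the case $\ell=0$ is just $f(0)=c_0$ from (\ref{eq:phirec}), and the inductive step applies $D$ once to pass from $f=\sum c_j\phi_j$ to $Df=\sum_{j\geq 1} c_j\phi_{j-1} = \sum_{i\geq 0} c_{i+1}\phi_i$, then invokes the inductive hypothesis for $Df$ with exponent $\ell-1$.
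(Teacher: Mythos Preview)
Your proof is correct and follows essentially the same approach as the paper: apply $D^\ell$ to the expansion using (\ref{phi-down}) to obtain $D^\ell f=\sum_{j=\ell}^{p^k-1}c_j\phi_{j-\ell}$, then evaluate at $0$ using $\phi_0(0)=1$ and $\phi_m(0)=0$ for $m\neq 0$. The extra remarks you add about the boundary convention and the alternative inductive phrasing are fine but not needed.
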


\begin{proof}
By (\ref{phi-down}), we have 
$$
D^\ell f =\sum_{j=\ell}^{p^k-1}c_j\phi_{j-\ell}.
$$
We now evaluate this at $x=0$. Since $\phi_0(0)=1$ and $\phi_j(0)=0$ for all $j>0$, we get (\ref{deriv-at-0-eq}). 
\end{proof}

\begin{corollary}\label{a-expansion}
Let $a\in R$ and $m\in [p^k]$. Then
$$
\phi_m(ax)=\sum_{\ell=0}^m A_{m,\ell}(a)\, \phi_\ell(x),
$$
where $A_{m,\ell}(a)=\Delta_a^\ell \phi_m(0)$.

\end{corollary}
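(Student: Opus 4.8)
The plan is to reduce this to Lemma \ref{deriv-at-0} applied to the function $x\mapsto\phi_m(ax)$. First I would note that by Lemma \ref{a-expansion}'s own hypothesis — or rather, since $\phi_m(ax)$ is, for fixed $a$, a function $R\to\ZZ/p\ZZ$, it lies in the span of $\{\phi_\ell\}_{\ell\in[p^k]}$ by Lemma \ref{lemma:phiLinInd}; moreover, using Corollary \ref{a-expansion} was stated precisely so that the coefficients $A_{m,\ell}(a)$ are \emph{defined} by the expansion, and the content of the present claim is the formula $A_{m,\ell}(a)=\Delta_a^\ell\phi_m(0)$. So write $\phi_m(ax)=\sum_{\ell=0}^{p^k-1}A_{m,\ell}(a)\phi_\ell(x)$ (the upper limit may be taken to be $m$, since $\phi_m(ax)$ has degree at most $m$ in $x$ by Lemma \ref{bxformula} when $a\in R^\times$, and more generally one checks the degree is at most $m$; but this refinement is not needed for the coefficient formula).

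Next I would apply Lemma \ref{deriv-at-0} to the function $g(x):=\phi_m(ax)$, which gives $A_{m,\ell}(a)=D^\ell g(0)$. The only remaining point is to identify $D^\ell g(0)$ with $\Delta_a^\ell\phi_m(0)$. Here $D$ acts on the variable $x$: $Dg(x)=g(x+1)-g(x)=\phi_m(ax+a)-\phi_m(ax)=(\Delta_a\phi_m)(ax)$, since $\Delta_a\phi_m(y)=\phi_m(y+a)-\phi_m(y)$ evaluated at $y=ax$ gives exactly $\phi_m(ax+a)-\phi_m(ax)$. Iterating, $D^\ell g(x)=(\Delta_a^\ell\phi_m)(ax)$ — this is a clean induction: if $D^{\ell}g(x)=(\Delta_a^\ell\phi_m)(ax)$ then $D^{\ell+1}g(x)=(\Delta_a^\ell\phi_m)(ax+a)-(\Delta_a^\ell\phi_m)(ax)=(\Delta_a^{\ell+1}\phi_m)(ax)$. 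Setting $x=0$ yields $D^\ell g(0)=(\Delta_a^\ell\phi_m)(0)$, which is the desired identity.

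The main (and really only) obstacle is bookkeeping about which variable the operators $D$ and $\Delta_a$ act on and confirming they commute with the substitution $x\mapsto ax$ in the stated way; this is the short induction above and is entirely routine. One should also double-check the degenerate case $a=0$ and small $m$: for $a=0$, $g(x)=\phi_m(0)$ is constant, equal to $1$ if $m=0$ and $0$ otherwise, while $\Delta_0^\ell\phi_m(0)=\phi_m(0)\cdot\one_{\ell=0}$ — wait, $\Delta_0\phi_m(y)=\phi_m(y+0)-\phi_m(y)=0$, so $\Delta_0^\ell\phi_m(0)=\phi_m(0)$ for $\ell=0$ and $0$ for $\ell\geq1$, matching $A_{m,0}(0)=\phi_m(0)$ and $A_{m,\ell}(0)=0$ for $\ell\geq 1$. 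So the formula holds in this case too, and no special argument is needed. I would present the proof in about four or five lines: cite Lemma \ref{deriv-at-0}, do the one-line induction showing $D^\ell(\phi_m(a\,\cdot))(x)=(\Delta_a^\ell\phi_m)(ax)$, and evaluate at $x=0$.
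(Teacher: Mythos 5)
Your proposal is correct and is essentially the paper's own proof: both apply Lemma \ref{deriv-at-0} to the auxiliary function $g(x)=\phi_m(ax)$ and then establish the key identity $D^\ell g(x)=(\Delta_a^\ell\phi_m)(ax)$ by the same one-line induction, evaluating at $x=0$. Your remark that the vanishing of $A_{m,\ell}(a)$ for $\ell>m$ follows automatically from the formula $A_{m,\ell}(a)=\Delta_a^\ell\phi_m(0)$ (since $\phi_m\in\Omega_m$ is $(m+1)$-null by Lemma \ref{lemma:Dm0}) is a valid and slightly cleaner way to justify the truncation of the sum at $\ell=m$.
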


\begin{proof}
For $f:R\to\ZZ/p\ZZ$ and $a\in R$, define $f^a(x)=f(ax)$. Then 
$(D f^a )(x) = f(ax+a) - f(ax)=(\Delta_af)(ax)$, and, by iteration, 
\begin{equation}\label{as-boxes}
(D^\ell f^a )(x)=(\Delta^\ell_{a}f) (ax) \hbox{ for all } \ell\in [p^k].
\end{equation}
The corollary follows by applying Lemma \ref{deriv-at-0} to $f=\phi_m^a$ and then using (\ref{as-boxes}).
\end{proof}


\section{Phi functions on $R^n$}
\label{sec-multidim-phi}


\subsection{Phi functions as generalized polynomials}

For \(\alpha=(\alpha_1,\dots,\alpha_n)\in [p^k]^n\), we define $\phi_\alpha:R^n\rightarrow \ZZ/p\ZZ$ by 
\[ \phi_\alpha(x)=\phi_{\alpha_1}(x_1)\cdots\phi_{\alpha_n}(x_n). \]
Let also 
\[ \Omega_m^n:= \text{span}\{\phi_\alpha:|\alpha|\leq m\}, \]
where $|\alpha|=\sum_i\alpha_i$.  We say that a function $f:R^n\rightarrow \ZZ/p\ZZ$ has {\em degree at most $m$} if $f\in\Omega_m^n$. By convention, we set $\Omega_m^n:=\{0\}$ for $m<0$.

\begin{lemma}\label{phibasis}
The functions 
$\{\phi_\alpha:\alpha\in [p^k]^n \} $ are linearly
independent over \(\ZZ/p\ZZ\). 
\end{lemma}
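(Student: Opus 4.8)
The plan is to reduce the multivariate statement to the one-variable case already established in Lemma~\ref{lemma:phiLinInd}. Recall that $\Phi$ is the $p^k\times p^k$ matrix with entries $\Phi_{m,x}=\phi_m(x)$, which Lemma~\ref{lemma:phiLinInd} shows is nonsingular (in fact upper triangular with unit diagonal). The key observation is that, by the very definition $\phi_\alpha(x)=\phi_{\alpha_1}(x_1)\cdots\phi_{\alpha_n}(x_n)$, the $(p^k)^n\times(p^k)^n$ matrix $\Phi^{(n)}$ with entries $(\Phi^{(n)})_{\alpha,x}=\phi_\alpha(x)$, rows indexed by $\alpha\in[p^k]^n$ and columns by $x\in R^n$, is exactly the $n$-fold tensor (Kronecker) product $\Phi\otimes\cdots\otimes\Phi$. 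Indeed, $(\Phi^{\otimes n})_{\alpha,x}=\prod_{i=1}^n\Phi_{\alpha_i,x_i}=\prod_{i=1}^n\phi_{\alpha_i}(x_i)=\phi_\alpha(x)$.

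First I would make this tensor-product identification precise, being careful about the indexing of rows and columns of a Kronecker product by tuples. Then I would invoke the standard fact that $\det(A\otimes B)=(\det A)^{m}(\det B)^{n}$ for $A$ an $n\times n$ and $B$ an $m\times m$ matrix, or more simply that a Kronecker product of invertible matrices is invertible (its inverse being the Kronecker product of the inverses). Applying this inductively, $\Phi^{\otimes n}$ is nonsingular over $\ZZ/p\ZZ$ because each factor $\Phi$ is. Since the rows of $\Phi^{(n)}$ are precisely the lists of values $(\phi_\alpha(x))_{x\in R^n}$, their linear independence over $\ZZ/p\ZZ$ is equivalent to the nonsingularity of $\Phi^{(n)}$, which gives the claim.

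Alternatively, if one prefers to avoid tensor-product formalism, the same result follows by a direct induction on $n$: suppose $\sum_{\alpha}c_\alpha\phi_\alpha=0$ as a function on $R^n$; fix the last coordinate $x_n=t$ and group terms by $\alpha_n$, writing the sum as $\sum_{j\in[p^k]}\phi_j(t)\,g_j(x_1,\dots,x_{n-1})$ where $g_j=\sum_{\alpha'\in[p^k]^{n-1}}c_{(\alpha',j)}\phi_{\alpha'}$ is a function on $R^{n-1}$. For each fixed $(x_1,\dots,x_{n-1})$ this is a vanishing linear combination of the functions $\phi_j$ on $R$, so by Lemma~\ref{lemma:phiLinInd} every $g_j$ vanishes at that point; hence each $g_j\equiv 0$ on $R^{n-1}$, and the inductive hypothesis forces all $c_\alpha=0$.

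Neither route presents a genuine obstacle — this lemma is essentially bookkeeping on top of the one-variable linear independence. The only point requiring a little care is the indexing convention: one must check that the ordering of multi-indices used to present $\Phi^{(n)}$ (or, in the inductive argument, the separation of the last coordinate) is consistent, so that the factorization $\phi_\alpha(x)=\prod_i\phi_{\alpha_i}(x_i)$ really does exhibit the matrix as a Kronecker product of copies of $\Phi$. I expect to write the tensor-product proof as the main argument for brevity, with the observation that it immediately also yields $\dim\Omega^n_{p^k-1}$-type counts since $\{\phi_\alpha:\alpha\in[p^k]^n\}$ then forms a basis for the space of all functions $R^n\to\ZZ/p\ZZ$.
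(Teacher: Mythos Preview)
Your proposal is correct, and your ``alternative'' inductive argument is exactly the paper's proof: fix the first $n-1$ coordinates, use one-variable linear independence in the last coordinate to kill the coefficients $g_j$, then invoke the inductive hypothesis on $R^{n-1}$. The tensor-product formulation you lead with is just a cleaner repackaging of the same content (the matrix $\Phi^{(n)}=\Phi^{\otimes n}$ is nonsingular because $\Phi$ is), so either route is fine and neither departs from the paper's approach in any substantive way.
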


\begin{proof}

We induct on \(n\). The case \(n=1\) is given by Lemma \ref{lemma:phiLinInd}.
Assume now that \(n>1\) and that the lemma holds in dimensions less than \(n\). 
Suppose that there exist \(c_\alpha\in \ZZ/p\ZZ\) such that 
\[ \sum_{\alpha\in[p^k]^n}c_\alpha \phi_\alpha(x_1,\dots,x_n)=0. \]
Write $\alpha=(\widetilde{\alpha},\alpha_n)$, where $\widetilde{\alpha}=(\alpha_1,\dots,\alpha_{n-1})$.
For fixed \(x_1,\dots,x_{n-1}\in R\), we have
\[ 0=\sum_{\alpha_n=0}^{p^k-1}\left(\sum_{\widetilde{\alpha}\in[p^k]^{n-1}}c_{(\widetilde{\alpha},\alpha_n)} \phi_{\widetilde{\alpha}}(x_1,\dots,x_{n-1})\right)\phi_{\alpha_n}(x_n). \]
This is true for all \(x_n\in R\), so by the linear independence of the functions \(\phi_{\alpha_n}\), we have
\[ \sum_{\widetilde{\alpha}\in[p^k]^{n-1}}c_{(\widetilde{\alpha},\alpha_n)} \phi_{\widetilde{\alpha}}(x_1,\dots,x_{n-1})=0 \]
for all \(\alpha_n\).
Since this holds for all \(x_1,\dots,x_{n-1}\in R\), it follows by the inductive hypothesis that \(c_{(\widetilde{\alpha},\alpha_n)}=0\) for all \(\widetilde{\alpha}, \alpha_n\). That is, \(c_\alpha=0\) for all \(\alpha\). 
\end{proof}

\begin{corollary}\label{dim-of-Omega}
For $m\leq p^k-1$, 
the dimension of $\Omega_{m}^n$ over $\ZZ/p\ZZ$ is ${m+n\choose n}$.
\end{corollary}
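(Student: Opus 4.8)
The plan is to count the functions $\phi_\alpha$ with $|\alpha|\le m$ and invoke their linear independence. By Lemma~\ref{phibasis}, the set $\{\phi_\alpha:\alpha\in[p^k]^n\}$ is linearly independent over $\ZZ/p\ZZ$, so in particular the subfamily $\{\phi_\alpha:\alpha\in[p^k]^n,\ |\alpha|\le m\}$ is linearly independent. Hence the dimension of $\Omega_m^n=\mathrm{span}\{\phi_\alpha:|\alpha|\le m\}$ equals the number of multi-indices $\alpha=(\alpha_1,\dots,\alpha_n)$ with each $\alpha_i\in[p^k]=\{0,1,\dots,p^k-1\}$ and $\alpha_1+\dots+\alpha_n\le m$.

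The only point requiring care is that we are counting multi-indices subject to \emph{two} constraints: the sum constraint $|\alpha|\le m$ and the per-coordinate cap $\alpha_i\le p^k-1$. The key observation is that these two constraints interact trivially in the relevant range: since $m\le p^k-1$ by hypothesis, any $\alpha$ with $|\alpha|\le m$ automatically satisfies $\alpha_i\le|\alpha|\le m\le p^k-1$ for every $i$, so the per-coordinate caps are vacuous. Therefore the count reduces to the number of $\alpha\in\ZZ_{\ge0}^n$ with $\alpha_1+\dots+\alpha_n\le m$, which is the standard stars-and-bars count ${m+n\choose n}$ (equivalently, the number of solutions in nonnegative integers of $\alpha_1+\dots+\alpha_n+\alpha_{n+1}=m$).

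There is essentially no obstacle here; this is a short dimension-counting argument, and the only thing to watch is to state explicitly that the hypothesis $m\le p^k-1$ makes the coordinate bounds in the definition of $[p^k]^n$ irrelevant, so that Lemma~\ref{phibasis} applies directly to the full collection of monomials of weight at most $m$.
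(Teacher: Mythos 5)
Your proof is correct and takes essentially the same approach as the paper: invoke Lemma~\ref{phibasis} for linear independence, then reduce to a stars-and-bars count of multi-indices with $|\alpha|\le m$. The paper packages the counting step as Lemma~\ref{starsbars} (with hypothesis $L<M$ playing the role of your $m\le p^k-1$), while you make explicit the observation that the hypothesis renders the per-coordinate caps $\alpha_i\le p^k-1$ vacuous --- a point the paper leaves implicit inside its stars-and-bars invocation.
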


\begin{proof}
By Lemma \ref{phibasis}, the functions $\phi_\alpha$ with $|\alpha|\leq m$ are linearly independent. Therefore 
the dimension of $\Omega_{m}^n$ over $\ZZ/p\ZZ$ is equal to the number of $\alpha\in[p^k]^n$ such that $|\alpha|\leq m$. By Lemma~\ref{starsbars} below, this number is equal to ${m+n\choose n}$.
\end{proof}

\begin{lemma}
\label{starsbars}
Let $M,L \in\NN$ and suppose that $L<M$. Then 
\[ \#\{ (\ell_1,\dots,\ell_n)\in[M]^n:\ell_1+\dots+\ell_n\leq L\}={L+n\choose n}. \]
\end{lemma}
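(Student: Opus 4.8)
The plan is to reduce this to a standard stars-and-bars count, the one subtlety being the role of the hypothesis $L<M$. First I would observe that this hypothesis makes the box constraints $\ell_i\le M-1$ redundant: if $\ell_1,\dots,\ell_n$ are nonnegative integers with $\ell_1+\dots+\ell_n\le L$, then in particular each $\ell_i\le L\le M-1$, so $(\ell_1,\dots,\ell_n)\in[M]^n$ automatically. Hence
\[
\{(\ell_1,\dots,\ell_n)\in[M]^n:\ell_1+\dots+\ell_n\le L\}=\{(\ell_1,\dots,\ell_n)\in\ZZ_{\ge0}^n:\ell_1+\dots+\ell_n\le L\}.
\]

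Next I would turn the inequality into an equality by adding a slack coordinate. The map $(\ell_1,\dots,\ell_n)\mapsto(\ell_1,\dots,\ell_n,\,L-\ell_1-\dots-\ell_n)$ is a bijection from the set on the right above onto $\{(\ell_1,\dots,\ell_{n+1})\in\ZZ_{\ge0}^{n+1}:\ell_1+\dots+\ell_{n+1}=L\}$; the last coordinate is $\ge0$ precisely because $\ell_1+\dots+\ell_n\le L$, and the inverse simply forgets the last coordinate. Finally, the number of nonnegative integer solutions of $\ell_1+\dots+\ell_{n+1}=L$ is ${L+n\choose n}$ by the classical stars-and-bars argument: a solution corresponds to a string of $L$ stars and $n$ bars (the bars delimiting the $n+1$ blocks), and there are ${L+n\choose n}$ ways to place the $n$ bars among the $L+n$ symbols.

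There is no genuine obstacle here; the computation is routine. The only point that deserves care is that the hypothesis $L<M$ is used exactly once, to discard the upper constraints, and that the statement genuinely requires it — for $L\ge M$ the true count is strictly smaller than ${L+n\choose n}$, since then some tuples summing to $\le L$ violate $\ell_i\le M-1$.
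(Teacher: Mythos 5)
Your proof is correct and follows essentially the same route as the paper's: both pass from $n$-tuples with sum $\le L$ to $(n+1)$-tuples with sum $=L$ via a slack coordinate and then invoke stars-and-bars. You spell out two details the paper leaves implicit — that $L<M$ makes the upper box constraints vacuous, and why the slack coordinate lands in the right range — but the underlying argument is the same.
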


\begin{proof}
What we seek is equivalent to the number of $(n+1)$-tuples $(\ell_1,\dots,\ell_{n+1})\in[M]^{n+1}$ such that $\ell_1+\dots+\ell_{n+1}=L$. By a stars and bars combinatorial argument, there are ${L+n\choose n}$ such tuples.
\end{proof}

\begin{definition}\label{def-boxes}{\bf (Iterated differences)} 
Let $f:R^n\rightarrow \ZZ/p\ZZ$ be a function. For 
$r\in R^n$, define $$\Delta_r f(x):=f(x+r)-f(x).$$

The {\em iterated difference function of $f$} of order 
$d\in\NN$ with steps  $r^{(1)},\dots,r^{(d)}\in R^n$ is
\begin{equation}\label{def-eval}
\Delta_{r^{(1)}}\dots \Delta_{r^{(d)}} f(x)
= \sum_{\vec\epsilon \in\{0,1\}^{d}}(-1)^{|\epsilon|+d}f( x_{\vec\epsilon}),
\end{equation}
where for each $\vec{\epsilon} = (\epsilon_1,\dots,\epsilon_d) \in\{0,1\}^{d}$,
$$
|\epsilon|=\sum_{j=1}^d \epsilon_j, \ \ 
x_{\vec\epsilon}=x+ \sum_{j=1}^d \epsilon_j r^{(j)}.$$

We say that a function $f:R^n\to R$ is {\em $d$-null} if $\Delta_{r^{(1)}}\dots \Delta_{r^{(d)}} f(x)=0$
for all $x$ and for all $r^{(1)},\dots,r^{(d)}\in R^n$.
\end{definition}

It is useful to think of $\Delta_{r^{(1)}}\dots \Delta_{r^{(d)}} f(x)$ as the evaluation of $f$ on the $d$-dimensional box with vertices $x_{\vec\epsilon}$, where the values of $f(x_{\vec\epsilon})$ are counted with alternating $\pm$ signs as indicated. Note that the differences $r^{(j)}$ need not be distinct, in which case some of the vertices may occur in (\ref{def-eval}) for more than one value of $\vec{\epsilon}$.

In Proposition \ref{prop:phiNull} below, we show that a function $f:R^n\rightarrow \ZZ/p\ZZ$
 is $(d+1)$-null if and only if $f\in\Omega^n_d$. First, we need the following lemma.

\begin{lemma}
\label{lemma:phidiff}
For $\alpha\in[p^k]^n$ and \(r\in R^n\), we have $\Delta_r \phi_\alpha \in \Omega^n_{|\alpha|-1}$.

\end{lemma}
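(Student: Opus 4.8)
The plan is to reduce the multivariate statement to the one-variable theory already developed, by writing $\Delta_r$ as a composition of partial difference operators in each coordinate. Write $r=(r_1,\dots,r_n)$ and introduce the intermediate points $y^{(0)}=x$, $y^{(i)}=x+(r_1,\dots,r_i,0,\dots,0)$, so that $y^{(n)}=x+r$. Then telescoping gives
\begin{equation*}
\Delta_r\phi_\alpha(x)=\phi_\alpha(x+r)-\phi_\alpha(x)=\sum_{i=1}^{n}\bigl(\phi_\alpha(y^{(i)})-\phi_\alpha(y^{(i-1)})\bigr),
\end{equation*}
and each summand changes only the $i$-th coordinate, from $x_i$ to $x_i+r_i$, while freezing the others. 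Since $\phi_\alpha$ factors as $\phi_{\alpha_1}(x_1)\cdots\phi_{\alpha_n}(x_n)$, the $i$-th summand equals $\bigl(\prod_{j\neq i}\phi_{\alpha_j}(x_j')\bigr)\cdot\Delta_{r_i}\phi_{\alpha_i}(x_i)$ for the appropriate frozen values $x_j'$, where $\Delta_{r_i}$ is the one-variable difference operator of Definition \ref{def-differences}.

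The key input is then Lemma \ref{D-consistent}, which gives $\Delta_{c}\phi_{\alpha_i}-c\,\phi_{\alpha_i-1}\in\Omega_{\alpha_i-2}$ in one variable; in particular $\Delta_{r_i}\phi_{\alpha_i}\in\Omega_{\alpha_i-1}$ as a function of $x_i$. Thus the $i$-th summand, as a function of $x\in R^n$, is a product of $\phi_{\alpha_j}(x_j)$ for $j\neq i$ with something in $\Omega_{\alpha_i-1}$ in the variable $x_i$. Expanding that one-variable factor in the basis $\{\phi_\ell\}_{\ell\leq\alpha_i-1}$ shows the $i$-th summand is a linear combination of functions $\phi_\beta$ with $\beta_j=\alpha_j$ for $j\neq i$ and $\beta_i\leq\alpha_i-1$, hence $|\beta|\leq|\alpha|-1$. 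Summing over $i$, we get $\Delta_r\phi_\alpha\in\Omega^n_{|\alpha|-1}$, as claimed. One should be slightly careful that the ``frozen'' coordinates entering the $i$-th summand are $x_j+r_j$ for $j<i$ and $x_j$ for $j>i$, not all equal to $x_j$; but this does not matter, since $\phi_{\alpha_j}(x_j+r_j)$ is still a function of $x_j$ alone lying in $\Omega_{\alpha_j}$, so after expansion each contributes a $\phi_{\beta_j}$ with $\beta_j\leq\alpha_j$, and the degree bound $|\beta|\leq|\alpha|-1$ is preserved because the strict drop happens in the $i$-th coordinate.

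I do not expect a serious obstacle here: the statement is essentially a product-rule/telescoping bookkeeping argument resting on the one-variable Lemma \ref{D-consistent} and the factorization of $\phi_\alpha$. The only point requiring mild attention is keeping the indexing of the frozen coordinates straight in the telescoping sum, and noting that $\Delta_{r_i}$ may be applied with a non-invertible step $r_i$, which is fine since Lemma \ref{D-consistent}(i) (as opposed to (ii)) covers all $c\in R$. An alternative, even cleaner route would be to prove the purely formal statement that if $f_1,\dots,f_n$ are one-variable functions with $f_i\in\Omega_{\alpha_i}$, then $\Delta_r\bigl(f_1(x_1)\cdots f_n(x_n)\bigr)\in\Omega^n_{\alpha_1+\dots+\alpha_n-1}$ whenever each $\Delta_{c}f_i\in\Omega_{\alpha_i-1}$; this is exactly the telescoping computation above and makes the induction-free structure transparent.
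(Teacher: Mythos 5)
Your proof is correct and takes essentially the same route as the paper: the paper proves the lemma by induction on $n$, peeling off the first coordinate via the two-term Leibniz-type decomposition $\Delta_r\phi_\alpha = (\Delta_{r_1}\phi_{\alpha_1})\phi_{\tilde\alpha} + \phi_{\alpha_1}(x_1+r_1)\,\Delta_{\tilde r}\phi_{\tilde\alpha}$, which when unrolled is exactly your $n$-term telescoping sum. Both rely on the same inputs, namely Lemma~\ref{D-consistent}(i) for the coordinate being differenced and Vandermonde's identity~(\ref{phi-e1}) to see that the shifted factors $\phi_{\alpha_j}(x_j+r_j)$ remain in $\Omega_{\alpha_j}$, so the two write-ups differ only in presentation.
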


\begin{proof}

We induct on \(n\). When \(n=1\), the result is true by Lemma \ref{D-consistent}.
Now assume that \(n>1\), and that the lemma holds in all dimensions lower than $n$.  Then
\begin{align*} \phi_\alpha(x+r)-\phi_\alpha(x)=&\prod_{j=1}^n\phi_{\alpha_j}(x_j+r_j)-\prod_{j=1}^n\phi_{\alpha_j}(x_j)  \\
=&\big(\Delta_{r_1} \phi_{\alpha_1}(x_1)\big) \phi_{(\alpha_2,\dots,\alpha_n)}(x_2,\dots,x_n)\\
&+\phi_{\alpha_1}(x_1+r_1)\,\Delta_{(r_2,\dots,r_n)}\phi_{(\alpha_2,\dots,\alpha_n)}(x_2,\dots,x_n).
\end{align*}
By the inductive assumption, we have
\[ \Delta_{r_1} \phi_{\alpha_1}(x_1)\in\Omega^1_{\alpha_1-1},\ \ 
\Delta_{(r_2,\dots,r_n)} \phi_{(\alpha_2,\dots,\alpha_n)}
\in\Omega^{n-1}_{\alpha_2+\dots+\alpha_n-1},
\]
where we recall that $f\in\Omega_{-1}^n$ means that $f$ is the zero function. Additionally, $\phi_{\alpha_1}(x_1+r_1)\in\Omega_{\alpha_1}$ by (\ref{phi-e1}). 
It follows that $\Delta_r\phi_\alpha \in\Omega^n_{|\alpha|-1}$, as claimed.
\end{proof}

\begin{proposition}
\label{prop:phiNull}
Let \(g:R^n\rightarrow {\ZZ/p\ZZ}\) and $d\in\{0,1,\dots,n(p^k-1)\}$. Then \(g\) is \((d+1)\)-null if and only if $g\in\Omega_d^n$.
\end{proposition}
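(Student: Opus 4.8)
The plan is to prove the two implications separately, using the linear-independence and derivative machinery already developed. For the direction $g\in\Omega_d^n \Rightarrow g$ is $(d+1)$-null: by linearity it suffices to treat $g=\phi_\alpha$ with $|\alpha|\leq d$. I would iterate Lemma~\ref{lemma:phidiff} a total of $d+1$ times: each application of a difference operator $\Delta_{r^{(i)}}$ lowers the degree by at least one, so after $d+1$ steps we land in $\Omega^n_{|\alpha|-(d+1)}\subseteq \Omega^n_{-1}=\{0\}$. Since the iterated difference of Definition~\ref{def-boxes} is exactly the composition $\Delta_{r^{(1)}}\cdots\Delta_{r^{(d+1)}}$, this gives $(d+1)$-nullity. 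One should note that $\Delta_r$ maps $\Omega_m^n$ into $\Omega_{m-1}^n$ (the spanning statement of Lemma~\ref{lemma:phidiff} extended by linearity), which makes the iteration clean.

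For the converse, $g$ is $(d+1)$-null $\Rightarrow g\in\Omega_d^n$, I would argue by contrapositive, mirroring the one-dimensional argument in Lemma~\ref{lemma:Dm0}. Suppose $g\notin\Omega_d^n$. Expand $g=\sum_\alpha c_\alpha\phi_\alpha$ in the basis of Lemma~\ref{phibasis}, and among the $\alpha$ with $c_\alpha\neq0$ pick one, say $\beta$, with $|\beta|$ maximal; then $|\beta|\geq d+1$. The strategy is to choose the steps $r^{(1)},\dots,r^{(d+1)}$ and the base point $x$ so that the iterated difference of $g$ is nonzero. The natural choice is to take all steps equal to standard basis directions: using $r=e_i$ and the identity $\Delta_{e_i}\phi_\alpha(x)=\phi_{\alpha-e_i}(x_i)\prod_{j\neq i}\phi_{\alpha_j}(x_j)$ up to lower-degree terms (from Lemma~\ref{D-consistent} applied in the $i$-th coordinate, as in the proof of Lemma~\ref{lemma:phidiff}), one sees that applying $\Delta_{e_i}$ exactly $\beta_i$ times in each coordinate $i$ — a total of $|\beta|$ operators — reduces $\phi_\beta$ to $\phi_0=1$ up to lower order, while annihilating every $\phi_\alpha$ with $\alpha\leq\beta$ coordinatewise, $\alpha\neq\beta$, and sending every other surviving term to something of strictly smaller total degree. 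To make this precise it is cleanest to observe, via Lemma~\ref{deriv-at-0} in each variable (or Corollary~\ref{a-expansion}'s one-dimensional analogue), that the coefficient $c_\beta$ can be recovered as $c_\beta = (D_{e_1}^{\beta_1}\cdots D_{e_n}^{\beta_n} g)(0)$, where $D_{e_i}$ is the partial discrete derivative in the $i$-th coordinate; since $D_{e_i}=\Delta_{e_i}$, this is exactly a $(d+1)$-fold (in fact $|\beta|$-fold, and $|\beta|\geq d+1$ so we may pad with extra steps) iterated difference evaluated at $0$, and it is nonzero. Hence $g$ is not $(d+1)$-null.

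The main obstacle is the bookkeeping in the converse: one must verify that taking iterated coordinatewise differences genuinely isolates the top coefficient $c_\beta$ and does not get contaminated by the lower-degree correction terms produced at each step (Lemma~\ref{lemma:phidiff} only controls degrees, not exact values). The clean way around this is not to track the whole expansion but to use the multivariate version of Lemma~\ref{deriv-at-0}: for $f=\sum_\gamma c_\gamma\phi_\gamma$ on $R^n$, one has $c_\gamma = (D_1^{\gamma_1}\cdots D_n^{\gamma_n}f)(0)$ where $D_i$ denotes the $i$-th partial discrete derivative — this follows by applying the one-variable Lemma~\ref{deriv-at-0} one coordinate at a time, exactly as Lemma~\ref{phibasis} was proved by peeling off one variable. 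Choosing $\gamma=\beta$ then shows the relevant iterated difference at $0$ equals $c_\beta\neq0$, and since $|\beta|\geq d+1$ we can pad the list of steps with, say, additional copies of $e_1$ (each extra $\Delta_{e_1}$ still applied, but now we instead view it as: first do $|\beta|$ difference steps to get the constant $c_\beta$, noting a $(d+1)$-null function has all its $\geq(d+1)$-fold differences vanish, contradiction). I would state the multivariate analogue of Lemma~\ref{deriv-at-0} as a one-line remark and then the proof is short.
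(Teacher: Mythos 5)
Your proof is correct, and the converse direction takes a genuinely different route from the paper's. You argue by contrapositive: expand $g=\sum_\alpha c_\alpha\phi_\alpha$, pick $\beta$ with $c_\beta\neq 0$ and $|\beta|\geq d+1$, and isolate $c_\beta$ as the value at $0$ of the $|\beta|$-fold coordinatewise iterated difference $D_1^{\beta_1}\cdots D_n^{\beta_n}g$, via a multivariate version of Lemma~\ref{deriv-at-0}. Since $(d+1)$-nullity forces every $m$-fold iterated difference to vanish for $m\geq d+1$ (applying further $\Delta$'s to the zero function yields zero), this contradicts $c_\beta\neq 0$. The paper proves the converse instead by a double induction on $n$ and $d$: it telescopes $g$ along the $e_n$ direction, applies the lower-dimensional hypothesis to the boundary slice $g(\widetilde{x},0)$, and applies the lower-$d$ hypothesis to the increment $h=\Delta_{e_n}g$. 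Your route avoids both the double induction and the telescoping bookkeeping, at the modest cost of stating the multivariate version of Lemma~\ref{deriv-at-0}, which, as you observe, follows by peeling off one coordinate at a time, the key computation being $D_1^{\gamma_1}\cdots D_n^{\gamma_n}\phi_\alpha(0)=\mathbf{1}_{\{\alpha=\gamma\}}$. Two small points for a final write-up: the ``padding with extra copies of $e_1$'' aside is a red herring --- the crisp statement is simply that $(d+1)$-nullity implies $m$-nullity for all $m\geq d+1$, and then $m=|\beta|$ does the job; and you do not actually need $|\beta|$ to be maximal, only $|\beta|\geq d+1$. The forward direction is the same in both proofs: iterate Lemma~\ref{lemma:phidiff}.
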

\begin{proof}
It follows by iterating Lemma \ref{lemma:phidiff} that functions in $\Omega_d^n$ are \((d+1)\)-null.
We need to prove the converse: if \(g\) is \((d+1)\)-null, then $g\in\Omega_d^n$.
We induct on \(n\), and within the induction on \(n\), induct on \(d\). 
For the base case \(n=1\), the argument below works; we just need to ignore the presence of the \(\widetilde{x}\). 

The claim is true for any $n$ when \(d=0\), since any \(1\)-null function is constant. 
Assume now that the claim is true in all dimensions lower than $n$, and true for $d$-null functions in $R^n$ for some $d\geq 0$. 

Let $g:R^n\to \ZZ/p\ZZ$ be \((d+1)\)-null. 
Define \(h(x)=g(x+e_n)-g(x-e)\), where \(e_n\) is the vector with a \(1\) in the \(n^{th}\) coordinate, and \(0s\) otherwise. 
For $x\in R^n$, we will write \(x=(x_1,\dots,x_n)=: (\widetilde{x},x_n)\), where
 \(\widetilde{x}=(x_1,\dots,x_{n-1})\). Then
\begin{equation}
g(x)= g(\widetilde{x},0)+h(\widetilde{x},0)+h(\widetilde{x},1)+\dots + h(\widetilde{x},x_n-1).
\end{equation}
The function $\widetilde{g}(\widetilde{x})=g(\widetilde{x},0)$, considered as a function on $R^{n-1}_{\widetilde{x}}$, is 
\((d+1)\)-null. By the lower-dimensional part of the inductive assumption, we have $\widetilde{g}\in\Omega^{n-1}_d$.
Considering now $g(\widetilde{x},0)$ as a function of $n$ variables that is constant in the $e_n$ direction, we have 
$$g(\widetilde{x},0)= \widetilde{g}(\widetilde{x})\phi_0(x_n)\in\Omega^n_d.$$
Next, $h=\Delta_{e_n}g$ is $d$-null. By the inductive hypothesis on \(d\), we can write
\[ h(x)=\sum_{\beta\in [p^k]^{n-1},|\beta|\leq d-1}\phi_\beta(\widetilde{x})\sum_{j=0}^{d-1-|\beta|}a_{\beta,j}\phi_j(x_n).  \] 
so that
\begin{align*}
\sum_{\ell=1}^{x_n}h(\widetilde{x},\ell)
&= \sum_{\ell=1}^{x_n}\sum_{|\beta|\leq d-1}\phi_\beta(\widetilde{x})\sum_{j=0}^{d-1-|\beta|}a_{\beta,j}\phi_j(\ell) \\
&=\sum_{|\beta|\leq d-1}\phi_\beta(\widetilde{x})\sum_{j=0}^{d-1-|\beta|}a_{\beta,j}\sum_{\ell=1}^{x_n}\phi_j(\ell) \\
&=\sum_{|\beta|\leq d-1}\sum_{j=0}^{d-1-|\beta|}a_{\beta,j}\phi_\beta(\widetilde{x})\phi_{j+1}(x_n). \\
\end{align*}
But for each pair \(\beta,j\) appearing in the sum, 
\[ \phi_\beta(\widetilde{x})\phi_{j+1}(x_n) =\phi_{(\beta,j+1)}(x) \in \Omega^n_d.\]
This ends the proof of the proposition.
\end{proof}

\begin{corollary}
\label{n-dull}
Let \(g:R^n\rightarrow {\ZZ/p\ZZ}\) and $d\in\{0,1,\dots,p-1\}$. Then $g\in\Omega_d^n$ if and only if $g$ is a polynomial in $R[x_1,\dots,x_n]$ of degree at most $d$.
\end{corollary}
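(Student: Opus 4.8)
The plan is to prove the two implications separately, using Proposition~\ref{prop:phiNull} as the bridge between the combinatorial notion of degree (membership in $\Omega_d^n$) and nullity, and exploiting the hypothesis $d\le p-1$ to make the relevant factorials invertible.

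\textbf{From $\Omega_d^n$ to polynomials.} Suppose $g\in\Omega_d^n$, so $g=\sum_{|\alpha|\le d}c_\alpha\phi_\alpha$ with $c_\alpha\in\ZZ/p\ZZ$. For each $\alpha$ occurring in the sum and each coordinate $i$ we have $\alpha_i\le|\alpha|\le d\le p-1$, so $\alpha_i!$ is a unit modulo $p$ (indeed in $R$). Hence, using Proposition~\ref{binomial-phi} to know the binomial is well defined on $R$, the function $\phi_{\alpha_i}(x_i)={x_i\choose\alpha_i}$ coincides with the genuine polynomial $\frac{1}{\alpha_i!}x_i(x_i-1)\cdots(x_i-\alpha_i+1)$ of degree $\alpha_i$. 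Therefore $\phi_\alpha$ is a polynomial of total degree $|\alpha|\le d$, and so is $g$.

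\textbf{From polynomials to $\Omega_d^n$.} Suppose $g$ is (the function on $R^n$ induced by) a polynomial of total degree at most $d$. The key step is the standard fact that a single discrete difference lowers total degree: for any $r\in R^n$ and any monomial $x^\beta$ of total degree $e$, expanding $\prod_i(x_i+r_i)^{\beta_i}-\prod_i x_i^{\beta_i}$ cancels the top-degree term and leaves a polynomial of total degree at most $e-1$. Iterating, $\Delta_{r^{(1)}}\cdots\Delta_{r^{(d+1)}}g$ is a polynomial of total degree at most $d-(d+1)<0$, i.e.\ the zero function, so $g$ is $(d+1)$-null. Since $0\le d\le p-1\le n(p^k-1)$, Proposition~\ref{prop:phiNull} applies and gives $g\in\Omega_d^n$.

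\textbf{What needs care.} There is no genuine obstacle here; the argument is short once Proposition~\ref{prop:phiNull} is available. The only point to handle carefully is the interplay between a polynomial with coefficients in $R$ viewed as a function $R^n\to R$, its reduction mod $p$, and a polynomial with coefficients in $\ZZ/p\ZZ$: because $d<p$, every factorial $\alpha_i!$ with $\alpha_i\le d$ is invertible both in $R$ and modulo $p$, so these viewpoints agree and nothing is lost in passing between them. The conceptual content is simply that, in the range $d<p$, our generalized-polynomial degree restricts to the classical polynomial degree, with the degree-lowering property of $\Delta_r$ and Proposition~\ref{prop:phiNull} supplying the two matching inequalities.
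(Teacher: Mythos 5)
Your proof is correct. The direction from polynomials to $\Omega_d^n$ coincides with the paper's argument: the difference operator $\Delta_r$ lowers polynomial degree, so a polynomial of total degree at most $d$ is $(d+1)$-null, and Proposition~\ref{prop:phiNull} then gives membership in $\Omega_d^n$. For the converse the paper uses dimension counting (both spaces have dimension ${d+n\choose n}$ and one contains the other), whereas you exhibit the inclusion $\Omega_d^n\subseteq\{\text{polynomial functions of degree}\le d\}$ explicitly by writing $\phi_{\alpha_i}(x_i)=\frac{1}{\alpha_i!}\,x_i(x_i-1)\cdots(x_i-\alpha_i+1)$ and observing that $\alpha_i!$ is a unit in $R$ since $\alpha_i\le|\alpha|\le d\le p-1$. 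Your route is slightly more self-contained: the paper's dimension count implicitly relies on the monomials $x^\alpha$ with $|\alpha|\le d$ being linearly independent as functions $R^n\to\ZZ/p\ZZ$, a point not spelled out there, while your explicit factorization bypasses that and yields the two inclusions directly.
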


In particular, since $\{\phi_\alpha:\ |\alpha|\leq d\}$ is a basis for $\Omega_d^n$, it follows that each $\phi_\alpha$ with $|\alpha|\leq p-1$ is a polynomial of degree $|\alpha|$.

\begin{proof}
It is well known, and easy to check directly, that if $f$ is a polynomial of degree $d$ then $\Delta_c f$ is a polynomial of degree at most $d-1$ for any $c\in R$. By iteration, it follows that every polynomial $g$ of degree $d\leq p-1$ is \((d+1)\)-null. By Proposition \ref{prop:phiNull}, we have $g\in \Omega_d^n$. Moreover, $\Omega_d^n$ and the space of all polynomials in $R[x_1,\dots,x_n]$ of degree at most $d$ have the same dimension ${d+n\choose n}$ (the number of distinct multiindices $\alpha=(\alpha_1,\dots,\alpha_n)$ with $|\alpha|\leq d$; see Lemma \ref{starsbars} above). Therefore the two spaces are equal.
\end{proof}



\subsection{Phi functions and hyperplanes}\label{phi-and-hyperplanes}

Recall that
\begin{equation}\label{hyper-span}
\calh^n:=\mathrm{span}\{\one_{H_b(a)}:\ a \in R^n, b\in\mathbb{P}R^{n-1}\}
\end{equation}
is the linear span of indicator functions of affine hyperplanes. We will refer to functions in $\calh^n$ as \emph{hyperplane functions} in $R^n$. 

We are interested in characterizing hyperplane functions and, in particular, determining the dimension of $\calh^n$. To this end, we first find a  spanning set in terms of the phi functions.

\begin{lemma}\label{lemma-move-to-phi}
\label{hyperasphi}
We have
\[\calh^n=\emph{span}\{\phi_\ell(\langle x,b\rangle): \ell\in[p^k],\   b\in\mathbb{P}R^{n-1}      \}.\]
\end{lemma}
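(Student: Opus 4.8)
The plan is to show the two spans coincide by establishing each inclusion separately, using the expansion of indicator functions of hyperplanes in terms of phi functions of the linear form $\langle x,b\rangle$.

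First I would handle the inclusion $\calh^n\subseteq\mathrm{span}\{\phi_\ell(\langle x,b\rangle)\}$. Fix an affine hyperplane $H_b(a)=\{x:\langle x-a,b\rangle=0\}$. Writing $c=\langle a,b\rangle\in R$, the indicator function $\one_{H_b(a)}(x)$ equals $1$ precisely when $\langle x,b\rangle=c$ in $R$, and $0$ otherwise. So the task reduces to the one-variable fact that, for any fixed $c\in R$, the indicator function $t\mapsto \one_{t=c}$ of a single point $c$ on $R$ lies in $\mathrm{span}\{\phi_\ell:\ell\in[p^k]\}$. This is immediate from Lemma \ref{lemma:phiLinInd}: the functions $\{\phi_\ell\}_{\ell\in[p^k]}$ form a basis for \emph{all} functions $R\to\ZZ/p\ZZ$, in particular for the point-mass at $c$. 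Composing this expansion with the substitution $t=\langle x,b\rangle$ expresses $\one_{H_b(a)}$ as a $\ZZ/p\ZZ$-linear combination of the functions $x\mapsto\phi_\ell(\langle x,b\rangle)$, $\ell\in[p^k]$, with the same direction $b$. Taking spans over all $a,b$ gives the inclusion.

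For the reverse inclusion $\mathrm{span}\{\phi_\ell(\langle x,b\rangle)\}\subseteq\calh^n$, I need to show each $\phi_\ell(\langle x,b\rangle)$ is a hyperplane function for a fixed direction $b$. The cleanest route is again via Lemma \ref{lemma:phiLinInd}: since $\Phi$ is invertible over $\ZZ/p\ZZ$, the point-mass indicators $\{\one_{t=c}:c\in R\}$ and the phi functions $\{\phi_\ell:\ell\in[p^k]\}$ span the same space of functions on $R$, so each $\phi_\ell$ is a $\ZZ/p\ZZ$-linear combination of point-masses $\one_{t=c}$. Substituting $t=\langle x,b\rangle$, we get that $\phi_\ell(\langle x,b\rangle)$ is a linear combination of functions $x\mapsto\one_{\langle x,b\rangle=c}$. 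Each such function is the indicator of the affine hyperplane $\{x:\langle x,b\rangle=c\}$; since $b$ is a nondegenerate direction, at least one coordinate $b_i$ is invertible, so picking $a$ with $\langle a,b\rangle=c$ (e.g. $a=(c b_i^{-1})e_i$) shows this set equals $H_b(a)$, a genuine affine hyperplane. Hence $\phi_\ell(\langle x,b\rangle)\in\calh^n$, and taking spans finishes the inclusion.

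The argument is essentially a change of basis on $R$ pulled back along the linear forms $\langle\,\cdot\,,b\rangle$, so there is no serious obstacle; the only point requiring a moment's care is the bookkeeping at the end of the second inclusion — checking that $\{x:\langle x,b\rangle=c\}$ is a bona fide affine hyperplane in our sense whenever $b$ is a nondegenerate direction, which is where nondegeneracy of $b$ is used. Everything else is a direct consequence of the basis property in Lemma \ref{lemma:phiLinInd}.
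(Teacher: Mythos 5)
Your proposal is correct and is essentially the same argument as the paper's: both reduce to the one-variable fact via Lemma \ref{lemma:phiLinInd} (the paper inserts the intermediate span of all functions $f(\langle x,b\rangle)$, you go directly through point-masses, but the content is identical), and both use nondegeneracy of $b$ to realize $\{x:\langle x,b\rangle=c\}$ as a genuine affine hyperplane $H_b(\overline{c})$.
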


\begin{proof}
It suffices to prove that for each $b\in\PP R^{n-1}$,
\begin{equation}\label{move-to-phi}
\begin{split}
\mathrm{span}\{\one_{H_b(a)}: a\in R\}& =\mathrm{span}\{f(\langle x,b\rangle): f\in (\ZZ/p\ZZ)^R\}
\\
&=\mathrm{span}\{\phi_\ell(\langle x,b\rangle): \ell\in[p^k]\}.
\end{split}
\end{equation}
The second equality in (\ref{move-to-phi}) follows from Lemma \ref{lemma:phiLinInd}. We now prove the first one. 
For any $b\in\PP R^{n-1}$ and $a\in R$, we may write
$$
 \one_{H_b(a)}(x)=\one_{\{0\}}(\langle x-a, b\rangle)
=\one_{\{\langle a,b\rangle\}} (\langle x, b\rangle)
$$  
which shows that $\one_{H_b(a)}$ can be written as a single-variable function of $\langle x, b\rangle$ as claimed. Conversely, let $f:R\to\ZZ/p\ZZ$ be a function. Then
$$
f(x)=\sum_{c\in R}f(c)\one_{\{c\}}, \hbox{ hence } f(\langle x,b\rangle)= \sum_{c\in R}f(c)\one_{\{c\}} (\langle x, b\rangle)
$$
Since $b\in\PP R^{n-1}$, there exists $i\in\{1,2,\dots,n\}$ such that $b_i$ is invertible. For each $c\in R$, let $\overline{c}\in R^n$ be the vector whose $i$-th coordinate is $cb_i^{-1}$ and all other coordinates are $0$. Then 
$\langle \overline{c},b\rangle=c$, so that
$$
\one_{\{c\}} (\langle x, b\rangle) =\one_{H_b(\overline{c})}(x).
$$
Hence every function $f(\langle x,b\rangle)$ can be written as a linear combination of hyperplane functions with the normal vector $b$. This ends the proof of (\ref{move-to-phi}), and of the lemma.

\end{proof}

\begin{proposition}\label{hyper-to-phi}
We have $\calh^n\subset \Omega^n_{p^k-1}$
for all $k\geq 1$. Moreover, if $k=1$ then $\calh^n= \Omega^n_{p-1}$.
In particular, 
\begin{equation}\label{Hbound-trivial}
\hbox{\rm rank}(\mathcal{A}_{p^k,n}^*)=\hbox{ \rm dim}(\calh^n )\leq {p^k-1+n\choose n},
\end{equation}
and (\ref{Hbound-trivial}) holds with equality when $k=1$.
\end{proposition}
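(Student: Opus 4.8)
The plan is to combine Lemma~\ref{hyperasphi} with the structural results about phi functions proved so far. By Lemma~\ref{hyperasphi}, $\calh^n$ is spanned by the functions $x\mapsto \phi_\ell(\langle x,b\rangle)$ with $\ell\in[p^k]$ and $b\in\PP R^{n-1}$, so it suffices to show that each such function lies in $\Omega^n_{p^k-1}$. Since $\langle x,b\rangle = b_1x_1+\dots+b_nx_n$ is a linear (hence degree-$1$) polynomial in $x$, and $\ell\le p^k-1$, I expect $\phi_\ell(\langle x,b\rangle)$ to have degree at most $\ell\le p^k-1$ as a function on $R^n$. To make this precise I would proceed as follows. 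First, write $y=\langle x,b\rangle$ and expand $\phi_\ell(y)$ using Vandermonde (\ref{phi-e1}) repeatedly: $\phi_\ell(b_1x_1+\dots+b_nx_n) = \sum_{i_1+\dots+i_n=\ell}\phi_{i_1}(b_1x_1)\cdots\phi_{i_n}(b_nx_n)$. By Corollary~\ref{a-expansion}, each $\phi_{i_j}(b_jx_j)$ is a linear combination of $\phi_0(x_j),\dots,\phi_{i_j}(x_j)$, so the product $\phi_{i_1}(b_1x_1)\cdots\phi_{i_n}(b_nx_n)$ is a linear combination of monomials $\phi_{j_1}(x_1)\cdots\phi_{j_n}(x_n)=\phi_{(j_1,\dots,j_n)}(x)$ with $j_t\le i_t$, hence $|{(j_1,\dots,j_n)}|\le i_1+\dots+i_n=\ell\le p^k-1$. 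Therefore $\phi_\ell(\langle x,b\rangle)\in\Omega^n_{p^k-1}$, and taking spans gives $\calh^n\subset\Omega^n_{p^k-1}$. An alternative (and perhaps cleaner) route is to invoke Proposition~\ref{prop:phiNull}: one checks directly that $x\mapsto\phi_\ell(\langle x,b\rangle)$ is $p^k$-null by noting that the map $x\mapsto\langle x,b\rangle$ sends a $p^k$-dimensional box in $R^n$ to a degenerate configuration, or more simply that $\Delta_{r}\big(\phi_\ell(\langle x,b\rangle)\big) = \phi_\ell(\langle x,b\rangle+\langle r,b\rangle)-\phi_\ell(\langle x,b\rangle)$, which by (\ref{phi-e1}) is a function of $\langle x,b\rangle$ of degree at most $\ell-1$ in the $\phi$-sense on $R$; iterating $\ell+1\le p^k$ times kills it, and one then lifts this to $R^n$ via a one-dimensional instance of Lemma~\ref{lemma:phidiff}-type reasoning. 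Either way the containment $\calh^n\subset\Omega^n_{p^k-1}$ follows, and then (\ref{Hbound-trivial}) is immediate from Corollary~\ref{dim-of-Omega} (which gives $\dim\Omega^n_{p^k-1}={p^k-1+n\choose n}$) together with the identification $\hbox{rank}(\cala^*_{p^k,n})=\dim\calh^n$, which holds because the columns of $\cala^*_{p^k,n}$ indexed by $b$, together with varying $a$, span exactly the vector space of functions $\one_{H_b(a)}$ — more carefully, $\dim\calh^n$ equals the rank of the matrix whose rows are the $\one_{H_b(a)}$, and transposing does not change rank, so this equals $\hbox{rank}(\cala^*_{p^k,n})$ after noting the row/column indexing in Definition~\ref{def-hyperplane}(ii) matches.

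For the case $k=1$: here $\Omega^n_{p-1}$ is, by Corollary~\ref{n-dull}, exactly the space of polynomials in $\FF_p[x_1,\dots,x_n]$ of total degree at most $p-1$. The containment $\calh^n\subset\Omega^n_{p-1}$ is the special case $k=1$ of what was just proved, so I only need the reverse inclusion $\Omega^n_{p-1}\subset\calh^n$, i.e.\ every polynomial of degree $\le p-1$ is a hyperplane function. It suffices to show each monomial $x_1^{e_1}\cdots x_n^{e_n}$ with $e_1+\dots+e_n\le p-1$ lies in $\calh^n$; and by Lemma~\ref{hyperasphi} it would be enough to realize each such monomial inside $\mathrm{span}\{\phi_\ell(\langle x,b\rangle):\ell\in[p],b\}$. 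The standard way to do this is via interpolation / finite differences: for a fixed direction $b$, the functions $x\mapsto (\langle x,b\rangle)^\ell$, $\ell=0,\dots,p-1$, span the same space as $x\mapsto\phi_\ell(\langle x,b\rangle)$ (both are bases of the functions of $\langle x,b\rangle$, by Lemma~\ref{lemma:phiLinInd} and the fact that $1,y,\dots,y^{p-1}$ is a basis of functions on $\FF_p$). Then one uses the classical identity expressing a monomial of degree $d\le p-1$ as an $\FF_p$-linear combination of $d$-th powers of linear forms $(\langle x,b\rangle)^d$ over finitely many directions $b$ — equivalently, polarization: the symmetric multilinear form associated to $y\mapsto y^d$ can be recovered from its diagonal by finite differences, which over $\FF_p$ requires $d<p$ precisely so that $d!$ is invertible. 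Applying this coordinate by coordinate (or all at once) expresses $x_1^{e_1}\cdots x_n^{e_n}$ as a combination of $(\langle x,b\rangle)^{e_1+\dots+e_n}$ for suitable $b$, hence puts it in $\calh^n$. I would cite that this is exactly the argument promised in Section~\ref{phi-and-hyperplanes}, and spell out the polarization identity.

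Assembling the pieces: from $\calh^n\subset\Omega^n_{p-1}$ and $\Omega^n_{p-1}\subset\calh^n$ we get $\calh^n=\Omega^n_{p-1}$ when $k=1$, and then $\dim\calh^n=\dim\Omega^n_{p-1}={p-1+n\choose n}$ by Corollary~\ref{dim-of-Omega}, so (\ref{Hbound-trivial}) holds with equality, recovering Theorem~\ref{k-is-1} (after comparing $\cala^*_{p,n}$ with $W_{p,n}$ via the results of Section~\ref{sec:prelim}, but that comparison is not needed for the statement of this proposition). For general $k$, only the inequality is claimed, and that follows from $\calh^n\subset\Omega^n_{p^k-1}$ and Corollary~\ref{dim-of-Omega} alone.

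The main obstacle I anticipate is the reverse inclusion $\Omega^n_{p-1}\subset\calh^n$ in the $k=1$ case: the containment $\calh^n\subset\Omega^n_{p^k-1}$ is essentially bookkeeping with Vandermonde and Corollary~\ref{a-expansion}, but showing that every low-degree polynomial is a genuine combination of hyperplane indicators requires the polarization/finite-difference identity and an honest check that the relevant factorials are invertible mod $p$ (which is where the degree bound $p-1$ enters). A secondary, more pedestrian point to get right is the precise relationship $\hbox{rank}(\cala^*_{p^k,n})=\dim\calh^n$ given the row/column conventions in Definition~\ref{def-hyperplane}(ii) — one must note that distinct pairs $(\widetilde b, c)$ with $\widetilde b$ a direction and $c\in R$ parametrize the affine hyperplanes $H_{\widetilde b}(a)$ with multiplicity, but the span is unaffected, and the rank of the incidence matrix equals the dimension of the row space, which is $\calh^n$ up to the harmless transpose.
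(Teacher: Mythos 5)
Your approach matches the paper's almost exactly: for $\calh^n\subset\Omega^n_{p^k-1}$ you reduce via Lemma~\ref{hyperasphi} to $\phi_\ell(\langle x,b\rangle)$, expand by Vandermonde, and bound the degree of each $\phi_{i_j}(b_jx_j)$ — where your citation of Corollary~\ref{a-expansion} is in fact the cleaner choice, since the paper invokes Lemma~\ref{bxformula} even though not every coordinate of a direction $b$ need be a unit. For the converse when $k=1$, the polarization step you flag as the ``main obstacle'' is exactly what the paper's Lemmas~\ref{lemma:spanHomPolys} and~\ref{lemma:spanPolys} supply, by solving a linear system whose Vandermonde-type determinant is nonzero precisely because $d\le p-1$ (the same place your argument needs $d!$ invertible), so the two are the same idea in slightly different packaging.
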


\begin{proof}[Proof of Proposition \ref{hyper-to-phi}, part 1]
We prove that $\calh\subset \Omega^n_{p^k-1}$
for all $k\geq 1$. By Lemma \ref{lemma-move-to-phi}, it suffices to prove that $\phi_\ell(\langle b,x\rangle)\in\Omega_\ell^n$ for all $\ell\in[p^k]$ and $b\in\mathbb{P}R^{n-1}$. To this end, we use (\ref{phi-e1}) to write
\begin{equation}\label{e-hyperterms}
\phi_\ell(\langle b,x\rangle)= \sum_{|\alpha|\leq\ell} \phi_{\alpha_1}(b_1x_1)\cdots\phi_{\alpha_n}(b_nx_n). 
\end{equation}
But Lemma~\ref{bxformula} implies that
\( \phi_{\alpha_i}(b_i\,\cdot \,)\in \Omega_{\alpha_i}\) for each $i$.
Hence each term on the right side of (\ref{e-hyperterms}) has degree at most $|\alpha|$, which in turn implies that $\phi_\ell(\langle b,x\rangle)\in\Omega_\ell^n$ as claimed. The bound (\ref{Hbound-trivial}) follows from Corollary \ref{dim-of-Omega} with $m=p^k-1$.
\end{proof}

The proof of the converse inclusion for $k=1$ will be based on the two lemmas below. 
For $d\in[p]$, let
 \(\calp^n_{\leq d}:= (\ZZ/p\ZZ)[x_1,\dots,x_n]_{\leq d}\) be the space of polynomials in $n$ variables of degree at most \(d\) over $\ZZ/p\ZZ$, and let \(\calp^n_{= d} \) be the subspace of homogeneous, degree \(d\) polynomials in
\(\calp^n_{\leq d} \). 
The relation between polynomials and hyperplane indicator functions for $k=1$ is well understood in the literature, see \cite{GD, MM, Smith}. The proof below is provided for completeness.

\begin{lemma}
\label{lemma:spanHomPolys}
Let $k=1$. For any \(n\in\NN\) and any \(d\in\{0,1,\dots,p-1\}\), 
\[ \emph{span}\left\{\langle x,b\rangle^d:b\in \PP(\ZZ/p\ZZ)^n\right\}=\calp^n_{=d}. \]

\end{lemma}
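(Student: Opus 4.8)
The plan is to prove the two inclusions separately, with the nontrivial direction being $\calp^n_{=d}\subseteq\Span\{\langle x,b\rangle^d:b\in\PP(\ZZ/p\ZZ)^n\}$.

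\textbf{The easy inclusion.} For the containment $\Span\{\langle x,b\rangle^d:b\}\subseteq\calp^n_{=d}$, note that for each fixed $b\in\PP(\ZZ/p\ZZ)^n$ (viewed as an honest vector in $(\ZZ/p\ZZ)^n$, up to scaling), $\langle x,b\rangle^d=(b_1x_1+\dots+b_nx_n)^d$ is manifestly a homogeneous polynomial of degree exactly $d$ (or the zero polynomial, but $d\le p-1$ means no cancellation of the pure powers occurs, and in any case it lies in $\calp^n_{=d}$). Since $\calp^n_{=d}$ is a linear space, any linear combination of these also lies in $\calp^n_{=d}$. Note also that scaling $b$ by $\lambda\in(\ZZ/p\ZZ)\setminus\{0\}$ multiplies $\langle x,b\rangle^d$ by $\lambda^d\ne 0$, so the span is well-defined independent of the choice of representative, which is why it makes sense to index by $\PP(\ZZ/p\ZZ)^n$.

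\textbf{The hard inclusion.} For $\calp^n_{=d}\subseteq\Span\{\langle x,b\rangle^d\}$, I would argue by dimension counting once I show the span contains a basis, or more directly show that every monomial $x_1^{a_1}\cdots x_n^{a_n}$ with $a_1+\dots+a_n=d$ (and each $a_i\le d\le p-1$, so these are genuine monomials not subject to Fermat reduction) lies in the span. The cleanest route is to expand $\langle x,b\rangle^d$ by the multinomial theorem:
\begin{equation*}
\langle x,b\rangle^d = \sum_{|\alpha|=d}\binom{d}{\alpha}b^\alpha x^\alpha,
\end{equation*}
where $\alpha$ ranges over multiindices with $|\alpha|=d$, $\binom{d}{\alpha}$ is the multinomial coefficient (nonzero mod $p$ since $d\le p-1$), and $b^\alpha=b_1^{\alpha_1}\cdots b_n^{\alpha_n}$. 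So $\langle x,b\rangle^d$ is a fixed linear combination of the monomials $x^\alpha$, $|\alpha|=d$, with coefficients $\binom{d}{\alpha}b^\alpha$. The question becomes: as $b$ ranges over $(\ZZ/p\ZZ)^n$, do the vectors $\big(\binom{d}{\alpha}b^\alpha\big)_{|\alpha|=d}$ span the whole space $(\ZZ/p\ZZ)^{N}$, where $N=\binom{d+n-1}{n-1}$ is the number of such $\alpha$? Equivalently (dividing out the nonzero constants $\binom{d}{\alpha}$), do the vectors $(b^\alpha)_{|\alpha|=d}$ span? This is a statement that a generalized Vandermonde-type matrix has full rank. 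I expect this to be the main obstacle, and I would handle it as follows: suppose a linear functional $\sum_{|\alpha|=d}c_\alpha b^\alpha = 0$ for all $b\in(\ZZ/p\ZZ)^n$; this says the polynomial $P(b)=\sum c_\alpha b^\alpha$ (homogeneous of degree $d\le p-1$ in $n$ variables) vanishes identically on $(\ZZ/p\ZZ)^n$. Since $P$ has degree $<p$ in each variable, the Combinatorial Nullstellensatz (or just induction on $n$ using that a one-variable polynomial of degree $<p$ with more than $\deg$ roots is zero) forces all $c_\alpha=0$. Hence the span is all of $(\ZZ/p\ZZ)^N$, so every monomial $x^\alpha$ with $|\alpha|=d$ — and thus all of $\calp^n_{=d}$ — is in $\Span\{\langle x,b\rangle^d:b\}$.

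\textbf{Assembling.} Combining the two inclusions gives the claimed equality. I would remark that the dimension of $\calp^n_{=d}$ is $\binom{d+n-1}{n-1}$, matching the count $N$ above, so the argument is tight. The only subtlety to be careful about is the restriction $d\le p-1$: it guarantees both that the multinomial coefficients $\binom{d}{\alpha}$ are units mod $p$ and that the vanishing-polynomial argument applies (degree in each variable stays below $p$); without it the statement would fail.
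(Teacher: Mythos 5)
Your proof is correct, but it takes a genuinely different route from the paper's. The paper argues by induction on $n$: writing $x^\alpha = \widetilde{x}^\beta x_n^{\alpha_n}$, it invokes the inductive hypothesis to replace $\widetilde{x}^\beta$ by combinations of $\langle\widetilde{x},c\rangle^\ell$, and then exhibits each $\langle\widetilde{x},c\rangle^{d-j}x_n^j$ as a solution of a linear system in the quantities $\langle x,(c,i)\rangle^d - (ix_n)^d$ for $i=1,\dots,d$, which it solves by showing the coefficient matrix has a nonzero Vandermonde-times-binomial determinant. Your approach is a single global argument: expand $\langle x,b\rangle^d$ by the multinomial theorem, reduce the question of spanning to the nonvanishing of the linear map $b\mapsto(b^\alpha)_{|\alpha|=d}$, and then kill any annihilating functional by observing that it would give a nonzero polynomial of individual degree $<p$ vanishing on all of $(\mathbb{Z}/p\mathbb{Z})^n$. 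Both proofs ultimately hinge on $d\le p-1$ — for you to make the multinomial coefficients units and to keep individual degrees below $p$, for the paper to make the binomial factors in the determinant nonzero. Your duality argument is somewhat shorter and avoids nested induction in this lemma (though it still uses the same degree-$<p$ vanishing fact the paper's induction would invisibly rest on), whereas the paper's explicit Vandermonde computation is reused almost verbatim in the proof of Lemma~\ref{lemma:spanPolys}, so there is some economy in their choice. One small point of hygiene you handled correctly: the span is indexed by projective directions (hence nonzero $b$), but since $\langle x,b\rangle^d$ scales by the unit $\lambda^d$ and $\langle x,0\rangle^d$ contributes nothing new, passing to all of $(\mathbb{Z}/p\mathbb{Z})^n$ in the duality step is harmless.
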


\begin{proof}
We proceed with induction on \(n\). The case \(n=1\) is immediate. Suppose that the statement holds in all dimensions lower than $n$. 
We show that
\[ x^\alpha\in \text{span}\left\{\langle x,b\rangle^d:b\in \PP(\ZZ/p\ZZ)^{n-1}\right\} \]
for all \(\alpha\in [p]^{n}\) with \(|\alpha|=d\). 

For $x\in (\ZZ/p\ZZ)^n$, we write $x=(\widetilde{x},x_n)$, where \(\widetilde{x}=(x_1,\dots,x_{n-1})\). Write also \(\alpha=(\beta,\alpha_{n})\), where \(\beta=(\alpha_1,\dots,\alpha_{n-1})\), so that \(x^\alpha=x_1^{\alpha_1}\cdots x_n^{\alpha_n}= \widetilde{x}^{\beta}x_{k+1}^{\alpha_{k+1}}\).
Let \(\ell=|\beta|\). By the inductive hypothesis, 
we may write
\[ \widetilde{x}^{\beta}x_{n}^{\alpha_{n}}=\sum_{c\in\PP(\ZZ/p\ZZ)^{n-2} }a_c\langle\widetilde{x},c\rangle^\ell x_{n}^{\alpha_{n}}. \]
Therefore it suffices to show that
\[ \langle\widetilde{x},c\rangle^\ell x_{n}^{\alpha_{n}}\in \text{span}\left\{\langle x,b\rangle^d:b\in \PP(\ZZ/p\ZZ)^{n-1}\right\} \]
for all  \(c\in\PP(\ZZ/p\ZZ)^{n-2}\).
To this end, it is enough to prove that
\begin{equation}\label{hom-e1}
\left\{\langle\widetilde{x},c\rangle^jx_{n}^{d-j}:j=0,1,\dots,d-1\right\}\subset \text{span}\left\{\langle x,(c,i)\rangle^d-(ix_{n})^d:i=1,\dots,d\right\}, 
\end{equation}
where $(c,i)=(c_1,\dots,c_{n-1},i)\in\PP(\ZZ/p\ZZ)^{n-1}.$
Note that
\[ \langle x,(c,i)\rangle^d-(ix_{n})^d=\sum_{j=0}^{d-1}{d\choose j}i^{j}\langle\widetilde{x},c\rangle^{d-j}x_{n}^j. \]
We consider this as a system of $d$ linear equations with \(\langle\widetilde{x},c\rangle^{d-j}x_{n}^{j}\). The coefficient matrix of this system has the determinant
\[ \left(\prod_{j=0}^{d-1}{d\choose j}\right)\det\begin{pmatrix}
1 & 1 &  1 & \cdots & 1 \\
1 & 2 & 2^2 &\cdots & 2^{d-1} \\
\vdots \\
1 & d & d^2 &\cdots & d^{d-1} 
\end{pmatrix}
= \prod_{j=0}^{d-1}{d\choose j}\prod_{1\leq i<j\leq d}(i-j),
\]
where we evaluated the determinant of the Vandermonde matrix.
Since \({d\choose j}\neq0\) for \(d\leq p-1\), 
our coefficient matrix is nonsingular, so that we can solve for 
\(\langle\widetilde{x},c\rangle^{d-j}x_{n}^{j}\) as claimed in (\ref{hom-e1}).

\end{proof}

\begin{lemma}
\label{lemma:spanPolys}
Let $k=1$. For any \(d\in\{0,1,\dots,p-1\}\), we have
\[ \emph{span}\{\langle x-a,b\rangle^d: b\in\PP(\ZZ/p\ZZ)^{n-1},a\in(\ZZ/p\ZZ)^n\}=\calp^n_{\leq d}. \]
\end{lemma}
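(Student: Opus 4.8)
The plan is to bootstrap from Lemma~\ref{lemma:spanHomPolys}, which already identifies the homogeneous part (the case $a=0$), and to recover the lower-degree pieces by translation. The inclusion $\mathrm{span}\{\langle x-a,b\rangle^d:b\in\PP(\ZZ/p\ZZ)^{n-1},a\in(\ZZ/p\ZZ)^n\}\subseteq\calp^n_{\leq d}$ is immediate, since each $\langle x-a,b\rangle^d$ is a polynomial of degree at most $d$ in $x$. For the reverse inclusion, note that $\calp^n_{\leq d}=\bigoplus_{j=0}^d\calp^n_{=j}$, and by Lemma~\ref{lemma:spanHomPolys} (applied with $d$ replaced by $j$) we have $\calp^n_{=j}=\mathrm{span}\{\langle x,b\rangle^j:b\in\PP(\ZZ/p\ZZ)^{n-1}\}$ for each $j\in\{0,1,\dots,d\}$. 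Hence it suffices to show that for each fixed direction $b$ and each $j\in\{0,1,\dots,d\}$, the power $\langle x,b\rangle^j$ lies in the span of the translates $\langle x-a,b\rangle^d$ with that same $b$.

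Fix a direction $b$, so that some coordinate $b_i$ is invertible. For $t\in\ZZ/p\ZZ$, set $a(t)=tb_i^{-1}e_i$, so that $\langle a(t),b\rangle=t$ and
\[
\langle x-a(t),b\rangle^d=(\langle x,b\rangle-t)^d=\sum_{j=0}^d{d\choose j}(-t)^{d-j}\,\langle x,b\rangle^j.
\]
Since $d\leq p-1$, we may choose $d+1$ distinct values $t_0,\dots,t_d\in\ZZ/p\ZZ$, giving $d+1$ linear relations expressing the functions $\langle x-a(t_m),b\rangle^d$ in terms of $\langle x,b\rangle^0,\dots,\langle x,b\rangle^d$. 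The coefficient matrix $\big({d\choose j}(-t_m)^{d-j}\big)_{0\leq m,j\leq d}$ is a Vandermonde matrix in the variables $-t_m$ with its columns rescaled by the scalars ${d\choose j}$, which are nonzero mod $p$ because $j\leq d\leq p-1$; its determinant is therefore $\pm\prod_{j=0}^d{d\choose j}\prod_{0\leq m<m'\leq d}(t_m-t_{m'})\neq0$. Thus the system is invertible over $\ZZ/p\ZZ$, and each $\langle x,b\rangle^j$ with $0\leq j\leq d$ is a linear combination of $\{\langle x-a(t_m),b\rangle^d:0\leq m\leq d\}$. Combining this with the previous paragraph proves the lemma.

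I do not expect any genuine obstacle here: this is essentially the same Vandermonde computation already carried out in the proof of Lemma~\ref{lemma:spanHomPolys}. The only place the hypothesis $d\leq p-1$ is used is to guarantee simultaneously that $\ZZ/p\ZZ$ contains $d+1$ distinct elements and that the binomial coefficients ${d\choose j}$, $0\leq j\leq d$, are invertible mod $p$ — both of which hold precisely in this range.
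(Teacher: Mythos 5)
Your proof is correct and follows essentially the same route as the paper: reduce to a fixed direction $b$ via Lemma~\ref{lemma:spanHomPolys}, translate along $b$ to turn $\langle x-a,b\rangle^d$ into $(\langle x,b\rangle-t)^d$, and invert a Vandermonde system (with binomial-coefficient column scalings, nonzero since $d\le p-1$) to recover the lower powers $\langle x,b\rangle^j$. The only cosmetic difference is that you allow arbitrary distinct $t_0,\dots,t_d$ whereas the paper specializes to $c=0,1,\dots,d$, and you spell out the direct-sum decomposition $\calp^n_{\le d}=\bigoplus_j\calp^n_{=j}$ a bit more explicitly.
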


\begin{proof}

By Lemma~\ref{lemma:spanHomPolys}, it suffices to show that
\[ \text{span}\{\langle x-a,b\rangle^d:a\in(\ZZ/p\ZZ)^n\}=\text{span}\{\langle x,b\rangle^\ell:\ell\in\NN,\; \ell\leq d \}. \]
For any \(a\not\in H_b\), we know that \(\langle a,b\rangle\) is non-zero, and so a unit. Then \(\langle ca,b\rangle\) will range over all values in \(\ZZ/p\ZZ\) as \(c\) ranges over all values in \(\ZZ/p\ZZ\). Consequently, 
\[ \{\langle x-a,b\rangle^d:a\in(\ZZ/p\ZZ)^n\}=\{(\langle x,b\rangle-c)^d:c=0,\dots,p-1\}. \]
Consider the system of equations 
\[ (\langle x,b\rangle-c)^d = \sum_{j=0}^d{d\choose j}c^j\langle x,b\rangle^{d-j}, \quad c=0,\dots,d,\]
with \(\langle x,b\rangle^{d-j}\) as the unknowns. The coefficient matrix of this system has the determinant
\[ \left(\prod_{j=0}^{d}{d\choose j}\right)\det\begin{pmatrix}
1& 0 & 0 &\cdots &0 \\
1 & 1 &  1 & \cdots & 1 \\
1 & 2 & 2^2 &\cdots & 2^{d} \\
\vdots \\
1 & d & d^2 &\cdots & d^{d} 
\end{pmatrix}=\prod_{j=0}^{d}{d\choose j}\prod_{c=2}^d c\prod_{1\leq u<v\leq d}(u-v).\]
This is non-zero as in the proof of Lemma \ref{lemma:spanHomPolys}, hence we can solve for \(\langle x,b\rangle^{d-j}\).

\end{proof}

\begin{proof}[Proof of Proposition \ref{hyper-to-phi}, part 2]
Assume that $k=1$. Observe that the characteristic function of a hyperplane $H_b(a)$ may be written as
$\one_{H_b(a)}(x)=1- \langle x-a , b\rangle^{p-1}$ mod $p$. 
By Lemma \ref{lemma:spanPolys} with $d=p-1$, we have $\calh^n=\calp^n_{\leq p-1}$. It follows by Corollary \ref{n-dull} that $\calh^n= \Omega^n_{p-1}$, as claimed.

\end{proof}

\begin{remark}\label{homog-remark}
Let $\mathcal{H}^n_0=\Span\{\one_{H_b}:\ b\in \PP(\ZZ/p\ZZ)^{n-1}\}$ be the span of homogeneous hyperplane functions. The same argument as above, but using Lemma~\ref{lemma:spanHomPolys} instead of \ref{lemma:spanPolys}, shows that $\mathcal{H}^n_0$ is spanned by homogeneous polynomials of degree $p-1$ together with $\one_{(\ZZ/p\ZZ)^n}$, the function identically equal to 1. To prove the converse, it suffices to verify that $\one_{(\ZZ/p\ZZ)^n}$ can be represented as a linear combination of hyperplane functions. Such representation is provided by
$$
\one_{(\ZZ/p\ZZ)^n}=\one_{x_1=0}+\sum_{c=0}^{p-1}\one_{x_2=cx_1}.
$$
This offers a proof of Theorem \ref{k-is-1}.

\end{remark}




\section{Degree lowering for products}
\label{sec-degree-lowering}

\begin{lemma}\label{degree-loss}
Let $f(x,y)=\phi_m(x_iy_j)$ for some $m\in[p^k]$ and $i,j\geq 1$, where $x=\sum x_\ell p^\ell$ and $y=\sum y_\ell p^\ell$
are the $p$-adic expansions of $x,y\in R$.
Then $f$
has degree at most $mp^{i+j}$, with equality attained only when $p=2$ and $i=j=m=1$.
\end{lemma}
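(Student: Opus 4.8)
The plan is to reduce everything to the one-variable identity $\phi_m(p^\ell z)=\phi_{m/p^\ell}(z)$ or $0$ (Lemma \ref{phi-higher-scales}), combined with the digit factorization (\ref{phi-digits}). First I would write $x_i$ and $y_j$ in terms of the ring elements they represent: since $x_i,y_j\in\{0,1,\dots,p-1\}$ are single $p$-adic digits, the product $x_iy_j$ is an ordinary integer in $[0,(p-1)^2]$, and its $p$-adic expansion $x_iy_j = c_0 + c_1 p$ has at most two digits, with $c_1\le p-2$. The point of working with $\phi_m(x_iy_j)$ rather than $\phi_m(x_i)\phi_m(y_j)$ is precisely that there is no carrying blow-up: $\phi_m$ evaluated at a ``small'' argument stays controlled.

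The key observation is that $f(x,y)=\phi_m(x_iy_j)$ depends on $x$ only through the digit $x_i$ and on $y$ only through $y_j$. So by Lemma \ref{digits-lemma}(ii)–(iii) (applied in the $x_i$-variable and again in the $y_j$-variable, viewing $f$ as a function of the two digits), I want to show that as a function of the single digit $x_i\in\ZZ/p\ZZ$ (with $y_j$ fixed), $\phi_m(x_iy_j)$ has degree at most $m$ as a polynomial in $x_i$; symmetrically in $y_j$. Granting this, Lemma \ref{digits-lemma}(iii) gives that $\phi_m(x_iy_j)$, as a function of $x_i$, lies in $\Span\{\phi_0(x),\phi_{p^i}(x),\dots,\phi_{mp^i}(x)\}$, i.e.\ contributes degree at most $mp^i$ from the $x$-variable; likewise degree at most $mp^j$ from the $y$-variable. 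Multiplying these (a product of a degree-$\le mp^i$ function in $x$ with a degree-$\le mp^j$ function in $y$ has degree $\le mp^i+mp^j$ in $\Omega^2$)\,—\,wait, that gives $m(p^i+p^j)$, which is the wrong bound.

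So the correct route must be finer: I should not separate the variables first. Instead, fix the pair of digits and use Vandermonde (\ref{phi-e2}) directly on the product. Write $x_iy_j$ as a sum over the digit decomposition and expand $\phi_m(x_iy_j)$ via (\ref{phi-e1}); but the cleanest path is to treat $\phi_m(x_iy_j)$ as a function $g(x_i,y_j)$ on $(\ZZ/p\ZZ)^2$ and decompose it in the basis $\phi_a(x_i)\phi_b(y_j)$ with $0\le a,b\le p-1$. By Corollary \ref{a-expansion} (or by Lemma \ref{deriv-at-0} applied to the function $t\mapsto \phi_m(t\,y_j)$ and then to $s\mapsto$ the coefficient as a function of $y_j$), the coefficient of $\phi_a(x_i)\phi_b(y_j)$ is $\Delta_{y_j\text{-step}}^b\,\Delta^a \phi_m(0)$-type quantity, and crucially this vanishes whenever $a>m$ OR $b>m$, since $\phi_m$ is $(m+1)$-null in one variable. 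Hence $g(x_i,y_j)=\sum_{a,b\le m}\gamma_{a,b}\,\phi_a(x_i)\phi_b(y_j)$. Now translate back: $\phi_a(x_i)=\phi_{ap^i}(x)$ and $\phi_b(y_j)=\phi_{bp^j}(y)$ by Lucas (as in the proof of Lemma \ref{digits-lemma}(iii)), so $f(x,y)=\sum_{a,b\le m}\gamma_{a,b}\,\phi_{ap^i}(x)\phi_{bp^j}(y)$, a sum of terms of degree $ap^i+bp^j\le m(p^i+p^j)\le mp^{i+j}$ provided $p^i+p^j\le p^{i+j}$, i.e.\ $p^{-i}+p^{-j}\le 1$, which holds for all $p\ge 2$ and $i,j\ge1$ except $p=2,i=j=1$ (where $p^i+p^j=4=p^{i+j}$, equality). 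That pins down both the bound $mp^{i+j}$ and the equality case: for $p=2,i=j=1$, the worst term is $a=b=m$, which survives iff $\gamma_{m,m}\ne0$; one computes $\gamma_{m,m}$ directly (it is $\Delta^m\Delta^m\phi_m(0)$ on the two digits, which for $p=2$ forces $m=1$, and then $\phi_1(x_1y_1)=x_1y_1=\phi_1(x_1)\phi_1(y_1)$ has the top term with coefficient $1$), giving equality exactly when $p=2,i=j=m=1$; for all other $(p,i,j,m)$ with $p=2,i=j=1$ and $m\ge2$ the coefficient $\gamma_{m,m}$ vanishes and the strict inequality persists.

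The main obstacle is the equality-case bookkeeping for $p=2,i=j=1$: one must show $\gamma_{m,m}=0$ for $m\ge2$ when $p=2$. I expect this to follow from the explicit small-argument structure — $x_1,y_1\in\{0,1\}$ so $x_1y_1\in\{0,1\}$, hence $\phi_m(x_1y_1)$ is already a function of a single bit and equals $\phi_m(1)x_1y_1 = \binom{1}{m}x_1y_1$ mod $2$, which is $0$ for $m\ge2$ — making the whole function zero, a fortiori of degree $0<mp^{i+j}$, and confirming equality only at $m=1$.
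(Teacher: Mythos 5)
Your argument is correct and reaches the claimed bound; it follows the paper's overall strategy but sharpens one step. Both you and the paper start from Lemma~\ref{digits-lemma}: since $f$ depends only on the digits $x_i$, $y_j$, it decomposes as $\sum_{a,b}\gamma_{a,b}\,\phi_{ap^i}(x)\phi_{bp^j}(y)$ with $a,b\in\{0,\dots,p-1\}$. The paper stops there, bounds each term's degree by the crude $(p-1)(p^i+p^j)$, and closes with a three-case analysis on $(i,j)$ (using $p^{i+1}\leq p^j$ when $i<j$, etc.). You additionally invoke Corollary~\ref{a-expansion} (equivalently, the $(m+1)$-nullity of $\phi_m$) to show $\gamma_{a,b}=0$ whenever $a>m$ or $b>m$, giving the sharper intermediate bound $\min(m,p-1)(p^i+p^j)$; this replaces the paper's case analysis with the single observation that $p^i+p^j\leq p^{i+j}$ for $i,j\geq1$, with equality exactly when $p=2$, $i=j=1$. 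Both proofs then settle the boundary case by inspecting $\phi_1(x_1y_1)=x_1y_1=\phi_2(x)\phi_2(y)$ when $p=2$, and your observation that $\phi_m(x_1y_1)\equiv 0$ for $p=2$, $m\geq 2$ is a nice way to handle the residual cases. One small remark: your parenthetical worry that $m(p^i+p^j)$ is ``the wrong bound'' is unfounded --- since $p^i+p^j\leq p^{i+j}$ for $i,j\geq 1$, that quantity is already no larger than $mp^{i+j}$, so the separated-variables sketch you first wrote down works once the two-variable expansion is justified, which your ``cleaner path'' paragraph then correctly supplies.
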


\begin{proof}
By Lemma \ref{digits-lemma} (ii), we have
$$
f(x,y)=\sum_\alpha c_\alpha \phi_{\alpha_1}(x)\phi_{\alpha_2}(y),
$$
where the summation is over $\alpha=(\alpha_1,\alpha_2)$ with
$$
\alpha_1\in\{0,p^i,2p^i,\dots,(p-1)p^i\},\ 
\alpha_2\in\{0,p^j,2p^j,\dots,(p-1)p^j\}.
$$
Thus the combined degree of each $\phi_{\alpha_1}(x)\phi_{\alpha_2}(y)$ is at most 
$(p-1)p^i+(p-1)p^j= p^{i+1}+p^{j+1}-p^i-p^j$. We may assume that $i\leq j$.

\begin{itemize}
\item If $i<j$, then $p^{i+1}\leq p^j$, so that $p^{i+1}+p^{j+1}-p^i-p^j\leq p^{j+1}-p^i<p^{i+j}$.

\smallskip

\item If $i=j\geq 2$, then $2p^{i+1}-2 p^i <2p^{i+1}\leq p^{i+2}\leq p^{i+j}$.

\smallskip

\item If $i=j=1$, then $2p^2-2p<2p^2=2p^{i+j}$. This is at most $mp^{i+j}$ unless $m=1$. However, if $m=1$, then
$$
\phi_1(x_1y_1)=x_1y_1=\phi_{p}(x)\phi_{p}(y)
$$
has degree $2p\leq p^{2}$, with equality only when $p=2$.

\end{itemize}

\end{proof}

Our next goal is to determine the degree of $f(x,y)=\phi_m(xy)$ as a function of 2 variables 
for $m\in[p^k]$.
Recall from Corollary \ref{a-expansion} that
\begin{equation}\label{lower-e1}
\phi_m(xy)=\sum_{\ell=0}^m A_{m,\ell}(y)\, \phi_\ell(x),
\end{equation}
where 
$A_{m,\ell}(y)=\Delta_{y}^\ell \phi_m(0)=\sum_{i=0}^\ell (-1)^{i+\ell} {\ell\choose i} \phi_m(iy).$
By Lemma \ref{bxformula}, $\phi_m(iy)$ is a function of degree at most $m$ in $y$ for each $i$. Hence $\phi_m(xy)$ 
has degree at most $m$ in each variable separately.

We will see below that the \textit{combined} degree of $\phi_m(xy)$, considered as a function of two variables, cannot be much larger than $m$. This is in sharp contrast to polynomials over $\ZZ$, where the combined degree of $(xy)^m=x^my^m$ is always $2m$. 

\begin{proposition}\label{product-degree}
Let $f(x,y)=\phi_m(xy)$ for some $m\in[p^k]$ and $x,y\in R$.
Then $f$ has degree at most $m+2(p-1)$. Specifically, we have
\begin{equation}\label{h-expand-1}
\phi_m(xy) 
=\sum_{\alpha}c_{m,\alpha}\, \phi_{\alpha_1}(x)\phi_{\alpha_2}(y),
\end{equation}
where the coefficients $c_{m,\alpha}$ satisfy $c_{m,\alpha}=0$ if $|\alpha|>m+2(p-1)$.

\end{proposition}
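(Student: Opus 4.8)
The plan is to expand $xy$ in $p$-adic digits and apply Vandermonde's identity \eqref{phi-e1} repeatedly, turning the statement into a bookkeeping estimate about products of one-digit functions.

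First I would write $x=\sum_{i=0}^{k-1}x_ip^i$ and $y=\sum_{j=0}^{k-1}y_jp^j$; since $p^{i+j}\equiv 0\bmod p^k$ whenever $i+j\geq k$, we have, in $R$,
\[
xy=\sum_{i+j<k}(x_iy_j)\,p^{i+j},
\]
where each $x_iy_j$ is an ordinary integer in $\{0,1,\dots,(p-1)^2\}$. Iterating \eqref{phi-e1} over these finitely many summands gives $\phi_m(xy)=\sum_{\sum m_{ij}=m}\prod_{i+j<k}\phi_{m_{ij}}\big((x_iy_j)p^{i+j}\big)$. By Lemma \ref{phi-higher-scales} each factor vanishes unless $p^{i+j}\mid m_{ij}$, and then, writing $m_{ij}=p^{i+j}\mu_{ij}$, it equals $\phi_{\mu_{ij}}(x_iy_j)$. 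Hence $\phi_m(xy)=\sum_\mu P_\mu$ with $P_\mu:=\prod_{i+j<k}\phi_{\mu_{ij}}(x_iy_j)$, the sum running over all $(\mu_{ij})_{i+j<k}$ with $\sum_{i+j<k}\mu_{ij}p^{i+j}=m$.

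Next I would bound $\deg P_\mu$. Each $P_\mu$ is a function of the digits $x_0,\dots,x_{k-1},y_0,\dots,y_{k-1}$, and factors attached to distinct digits multiply independently, so I would argue digit by digit. Fix $s\geq 1$: the factors of $P_\mu$ involving $x_s$ are exactly the $\phi_{\mu_{sj}}(x_sy_j)$, and a short computation with Lucas's theorem shows each has degree at most $\mu_{sj}$ (and at most $p-1$) in the single variable $x_s$; by Lemma \ref{digits-lemma}(iii) this contributes at most $\big(\sum_j\mu_{sj}\big)p^s$ to $\deg P_\mu$, and symmetrically for each $y_s$ with $s\geq1$. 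For the lowest digits $x_0,y_0$ I would use only the trivial bound $p-1$ for the degree of any function of a single digit. This yields
\[
\deg P_\mu\ \leq\ 2(p-1)+\sum_{i\geq1,\,j}\mu_{ij}p^{i}+\sum_{i,\,j\geq1}\mu_{ij}p^{j}.
\]
(For the terms with $i,j\geq1$ one could instead quote Lemma \ref{degree-loss}, which bounds $\deg\phi_{\mu_{ij}}(x_iy_j)$ by $\mu_{ij}p^{i+j}$ outright.)

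Finally, comparing the coefficient of each $\mu_{ij}$ on the two sides: $p^i+p^j\leq p^{i+j}$ when $i,j\geq 1$; for $i\geq1,\,j=0$ the left-hand contribution is $p^i=p^{i+j}$ (and symmetrically for $i=0,\,j\geq1$); and $\mu_{00}$ contributes nothing on the left. Therefore $\sum_{i\geq1,j}\mu_{ij}p^{i}+\sum_{i,j\geq1}\mu_{ij}p^{j}\leq\sum_{i+j<k}\mu_{ij}p^{i+j}=m$, so $\deg P_\mu\leq m+2(p-1)$ for every surviving $\mu$. Summing over $\mu$ shows $\phi_m(xy)$ has degree at most $m+2(p-1)$; expressing it in the basis $\{\phi_{\alpha_1}(x)\phi_{\alpha_2}(y):\alpha\in[p^k]^2\}$ (Lemma \ref{phibasis}) gives \eqref{h-expand-1} with $c_{m,\alpha}=0$ whenever $|\alpha|>m+2(p-1)$. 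The delicate part is the digit-by-digit estimate: checking carefully that multiplying several functions of the \emph{same} digit $x_s$ keeps its degree below both $p-1$ and $\sum_j\mu_{sj}$, tracking the (small) carries in the integer products $x_iy_j$ so that Lemma \ref{digits-lemma} applies cleanly, and confirming that the low digits $x_0,y_0$ are exactly where the extra $2(p-1)$ enters; everything else is formal.
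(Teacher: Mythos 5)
Your proposal is correct and follows essentially the same approach as the paper: expand $xy$ digit-wise, apply Vandermonde, and then show the low digits $x_0,y_0$ account for at most $p-1$ each while the high digits' contribution is bounded by $m$ via $p^i+p^j\leq p^{i+j}$ for $i,j\geq 1$. The paper organizes the per-term degree count by splitting the product into the $i=0$ factors, the $j=0$ factors, and the $i,j\geq1$ factors (using Lemma \ref{degree-loss} directly for the last group), whereas you organize it digit-by-digit; the two bookkeepings are equivalent, and your version in fact sidesteps a small double-count of the $(i,j)=(0,0)$ factor in the paper's $P_1P_2P_3$ factorization without affecting the final bound.
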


\begin{proof}
Let $m\in[p^k]$, and let $x=\sum x_ip^i$ and $y=\sum y_ip^i$ be the $p$-adic expansions of $x,y\in R$.
By (\ref{phi-e1}) and Lemma \ref{phi-higher-scales}, we have
\begin{align*}
\phi_m(xy)&= \phi_m\left(\sum_{i+j\leq k-1} p^{i+j}x_iy_j\right)
\\
&= \sum_{\vec{m}}\prod_{i,j}\phi_{m_{ij} }(x_iy_j),
\end{align*}
where the summation is over all $\vec{m}=(m_{ij})_{i+j\leq k-1} $ such that $\sum_{i,j} m_{ij}p^{i+j}=m$.
Fix $\vec{m}$, and consider the corresponding term in the sum above:
\begin{align*}
 \prod_{i,j}\phi_{m_{ij} }(x_iy_j) 
&= \left( \prod_{j=0}^{k-1} \phi_{m_{0j}} (x_0y_j)\right) 
 \left( \prod_{i=0}^{k-1} \phi_{m_{i0}} (x_iy_0)\right)
  \left( \prod_{i,j\geq 1} \phi_{m_{ij}} (x_iy_j)\right) 
 \\[2ex]
&=:P_1P_2P_3,
\end{align*}
By Lemma \ref{degree-loss}, $P_3$ has degree at most
\begin{equation}\label{p3-eq}
\sum_{i,j\geq 1} m_{ij}p^{i+j}.
\end{equation}
Next, we consider $P_1$. By (\ref{lower-e1}) and Lemma \ref{digits-lemma}, each factor $\phi_{m_{0j}} (x_0y_j)$ has degree at most $p-1$ in $x$ and at most $m_{0j}$ in $y_j$, therefore at most $m_{0j}p^j$ in $y$. In other words, we can write  $\phi_{m_{0j}} (x_0y_j)$ as a linear combination of terms of the form $\phi_{\beta_1}(x_0)\phi_{\beta_2}(y)$, where $\beta_2\leq m_{0j}p^j$. Taking the product, and applying Lemma \ref{digits-lemma} to the factors involving $x_0$ and Lemma 
\ref{cor:phiprod} to the factors involving $y$, we see that $P_1$ has degree at most
\begin{equation}\label{p1-eq}
(p-1)+ \sum_j m_{0j}p^j.
\end{equation}
Similarly, $P_2$ has degree at most $(p-1)+ \sum_i m_{i0}p^i$. Combining this with (\ref{p3-eq}) and (\ref{p1-eq}), we get the desired bound.
\end{proof}


\section{An upper bound on the rank of hyperplane functions}
\label{sec-main-upper-bound-1}

In this section we prove our lower bound on the rank of the reduced point-affine hyperplane incidence matrix, which we state again for the reader's convenience.

\begin{theorem}\label{main-upper-bound}
Let $p$ be prime, and let $k,n\in\NN$. Then
\begin{equation} 
\label{upperbound}
\emph{rank}(\mathcal{A}_{p^k,n}^*)\leq (2n){\lfloor p^k/2\rfloor+(n-1)(p-1)+n\choose n}.
\end{equation}
\end{theorem}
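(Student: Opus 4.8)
The plan is to bound $\dim(\calh^n)$ by exhibiting a spanning set of generalized polynomials whose total degree is controlled by $\lfloor p^k/2\rfloor$ rather than $p^k-1$, at the cost of a multiplicative factor. By Lemma~\ref{lemma-move-to-phi}, $\calh^n=\Span\{\phi_\ell(\langle x,b\rangle):\ell\in[p^k],\ b\in\PP R^{n-1}\}$, so it suffices to control the degrees of the functions $\phi_\ell(\langle x,b\rangle)$. Write $b\in\PP R^{n-1}$; since $b$ has an invertible coordinate, after relabeling assume $b_1\in R^\times$, so $\langle x,b\rangle = b_1 x_1 + \langle \widetilde x,\widetilde b\rangle$ where $\widetilde x=(x_2,\dots,x_n)$. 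Using Vandermonde's identity (\ref{phi-e1}) to split the sum $\langle x,b\rangle = b_1x_1 + u$ with $u=\langle\widetilde x,\widetilde b\rangle$, and using Lemma~\ref{bxformula} to replace $\phi_i(b_1x_1)$ by $b_1^i\phi_i(x_1)$ up to lower degree, we get $\phi_\ell(\langle x,b\rangle) = \sum_{i=0}^\ell b_1^i\phi_i(x_1)\phi_{\ell-i}(u)+(\text{lower degree})$. So the key point is to bound the degree of $\phi_s(\langle\widetilde x,\widetilde b\rangle)$ as a function on $R^{n-1}$ — but this is circular unless we do something smarter, because $\phi_s(\langle\widetilde x,\widetilde b\rangle)$ can itself have degree up to $p^k-1$ in the $x_1$-free variables.

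The actual mechanism must be the degree-lowering from Section~\ref{sec-degree-lowering}. The idea: $\phi_\ell(\langle x,b\rangle)$ is, after expansion via (\ref{phi-e1}), a sum of products $\prod_i \phi_{\alpha_i}(b_i x_i)$ with $|\alpha|\le \ell$. Each $\phi_{\alpha_i}(b_i x_i)$ is a single-variable function of $x_i$ of degree at most $\alpha_i$ (by Lemma~\ref{bxformula}), so this alone only recovers the trivial bound $\ell\le p^k-1$. To improve it, I would write each $\ell\in[p^k]$ in a way that exploits symmetry: note that $\phi_\ell$ and $\phi_{p^k-1-\ell}$ — or more precisely, we should split based on whether $\ell \le \lfloor p^k/2\rfloor$ or $\ell > \lfloor p^k/2\rfloor$. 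For $\ell \le \lfloor p^k/2\rfloor$, the degree of $\phi_\ell(\langle x,b\rangle)$ is already at most $\ell\le\lfloor p^k/2\rfloor$ and such functions live in $\Omega_{\lfloor p^k/2\rfloor}^n$, of dimension $\binom{\lfloor p^k/2\rfloor+n}{n}$. For $\ell > \lfloor p^k/2\rfloor$, I would use the reflection identity: for $x\in R$ with $p$-adic digits $x_i$, one has $\binom{x}{m}\equiv (-1)^{?}\binom{p^k-1-x}{p^k-1-m}$ digit-by-digit via Lucas, i.e. $\phi_m(x) = \phi_{p^k-1-m}(\sigma(x))$ up to sign, where $\sigma$ is the digit-reversal-of-value map $x\mapsto p^k-1-x$. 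Substituting $x_i\mapsto p^k-1-x_i$ in each coordinate converts $\phi_\ell(\langle x,b\rangle)$ for $\ell$ large into (a sign times) $\phi_{p^k-1-\ell}$ evaluated at an \emph{affine} expression in the new variables — whose degree is at most $p^k-1-\ell < \lfloor p^k/2\rfloor$ — composed with an affine change of variables. An affine change of variables $x\mapsto v-x$ preserves $\Omega_m^n$ (by Lemma~\ref{lemma:phidiff} / the translation-invariance already implicit in Vandermonde), so these functions, after the substitution, lie in the span of $\{\phi_\beta(v-x):|\beta|\le\lfloor p^k/2\rfloor\}$, a space again of controlled dimension but now depending on the fixed vector $v$ and on which coordinate was inverted.

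Assembling: for each choice of which of the $n$ coordinates of $b$ we declare invertible (this is where a factor $n$, and with the two halves $\ell\le\lfloor p^k/2\rfloor$ versus $\ell>\lfloor p^k/2\rfloor$ a factor $2n$ overall, enters), the corresponding hyperplane functions span a space of dimension at most $\binom{\lfloor p^k/2\rfloor+(n-1)(p-1)+n}{n}$. The extra $(n-1)(p-1)$ in the binomial upper index is the slack coming from Lemma~\ref{bxformula} and Lemma~\ref{cor:phiprod}: replacing $\phi_i(b_ix_i)$ by $b_i^i\phi_i(x_i)$ introduces an error term of degree at most $i-1$, and iterating over the $n$ coordinates, or more precisely over the $n-1$ "free" coordinates after fixing one invertible coordinate and an affine value, accumulates a total degree overhead of at most $(n-1)(p-1)$ — the same kind of bound appearing in Proposition~\ref{product-degree}. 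I would make this precise by an induction on $n$ mirroring the proof of Lemma~\ref{lemma:spanHomPolys}: at each step peel off one variable, use Vandermonde to separate it, and bound the residual degree in the remaining variables, keeping a running count of the $(p-1)$-overhead. Finally, summing the two ranges and the $n$ coordinate choices: $\dim(\calh^n) \le 2n\binom{\lfloor p^k/2\rfloor+(n-1)(p-1)+n}{n}$, which is (\ref{upperbound}).

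The main obstacle I anticipate is making the degree-lowering genuinely uniform in $b$ and carefully tracking the $(n-1)(p-1)$ slack: the naive estimate "degree $\le \lfloor p^k/2\rfloor$ on each scale" does not directly give a bound on the \emph{combined} degree, and one must reconcile the per-coordinate degree bounds (from Lemma~\ref{bxformula}) with the cross-scale interactions (from Lemma~\ref{degree-loss} and Proposition~\ref{product-degree}) to land exactly on $\lfloor p^k/2\rfloor+(n-1)(p-1)$ rather than something larger. A secondary subtlety is verifying that the digit-reflection identity $\phi_m(x) = \pm\,\phi_{p^k-1-m}(p^k-1-x)$ holds in the needed generality and that the resulting affine substitution keeps us inside a space of the claimed dimension; this should follow cleanly from Lucas's Theorem applied digit-by-digit, but the bookkeeping of signs and of the affine shift vector $v$ needs care.
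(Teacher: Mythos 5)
Your overall frame is correct and matches the paper: express $\calh^n$ as the span of the functions $\phi_\ell(\langle x,b\rangle)$ via Lemma~\ref{lemma-move-to-phi}, pick up a factor $n$ by choosing which coordinate of $b$ is invertible and normalized to~$1$, and try to cut the degree range roughly in half. The easy half also works as you say: for $\ell\leq\lfloor p^k/2\rfloor$, Proposition~\ref{hyper-to-phi} gives $\phi_\ell(\langle x,b\rangle)\in\Omega^n_\ell\subset\Omega^n_{\lfloor p^k/2\rfloor}$, a space of dimension $\binom{\lfloor p^k/2\rfloor+n}{n}$.

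The gap is the reflection identity. You propose $\phi_m(x)=\pm\,\phi_{p^k-1-m}(p^k-1-x)$, reduced digit-by-digit via Lucas to $\binom{a}{b}=\pm\binom{p-1-a}{p-1-b}$ for $a,b\in[p]$. This is false: $\binom{a}{b}\neq 0$ iff $a\geq b$, whereas $\binom{p-1-a}{p-1-b}\neq 0$ iff $a\leq b$, so away from the diagonal exactly one side vanishes. Concretely, $\phi_0\equiv 1$ while $\phi_{p^k-1}(p^k-1-x)=\binom{p^k-1-x}{p^k-1}=\one_{\{0\}}(x)$; no choice of sign makes these agree. Consequently the step that is supposed to drive the degree of the $\ell>\lfloor p^k/2\rfloor$ range below $\lfloor p^k/2\rfloor$ does not exist, and the argument collapses at exactly the point you flagged as needing care.

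The paper handles the hard half by a genuinely different mechanism, which is purely linear-algebraic rather than a functional equation for $\phi_m$. One writes the spanning functions in matrix form $\mathbb{H}^{(n)}=\Psi\,\mathbb{B}^{(n)}\Phi$ with $\Psi,\Phi$ nonsingular, where the rows of $\mathbb{B}^{(n)}$ are indexed by $(m,\widetilde\alpha)$ (degree in the coefficient variable $\widetilde a$) and the columns by $\beta$ (degree in $x$). The crucial input, from Proposition~\ref{product-degree}, is a \emph{joint} degree bound in both $\widetilde a$ and $x$: the entry at $((m,\widetilde\alpha),\beta)$ vanishes whenever $|\widetilde\alpha|+|\beta|>m+2(n-1)(p-1)$. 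Your sketch only ever tracks degree in $x$, which is exactly why the direct estimate stalls. Given the vanishing condition, one splits $\mathbb{B}^{(n)}$ by rows according to whether $m-|\widetilde\alpha|\leq\lambda$ or $>\lambda$: the first piece has all nonzero entries in columns with $|\beta|\leq\lambda+2(n-1)(p-1)$, so its rank is bounded by counting columns; the second piece has few nonzero rows, bounded by counting $(m,\widetilde\alpha)$ with $(p^k-1-m)+|\widetilde\alpha|<p^k-1-\lambda$. Choosing $\lambda=\lfloor p^k/2\rfloor-(n-1)(p-1)$ balances the two counts. The factor~$2$ thus comes from rank subadditivity across the row-split, not from reflecting $\ell\mapsto p^k-1-\ell$; and the $(n-1)(p-1)$ slack is, as you guessed, the accumulated cost of Proposition~\ref{product-degree} over $n-1$ coordinates, but it enters only after the balancing (the raw slack is $2(n-1)(p-1)$ and the optimization halves it).
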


Before starting the proof of the theorem, we compare (\ref{upperbound}) to the upper bound ${p^k-1+n\choose n}$ given by (\ref{Hbound-trivial}).
Suppose that $n$ is small relative to $p^{k-1}$, with $n<\epsilon p^{k-1}$ for some $\epsilon>0$. Then
\[ (2n){\lfloor p^k/2\rfloor+(n-1)(p-1)+n\choose n} \leq \frac{(p^k+2(n-1)(p-1)+2n)^n}{2^{n-1}(n-1)!}<\frac{p^{kn}(1+4\epsilon)^n}{2^{n-1}(n-1)!}. \]
Meanwhile, we have 
\[ {p^k-1+n\choose n}\geq \frac{p^{kn}}{n!}. \]
Hence, for $n<\epsilon p^{k-1}$, the estimate in~(\ref{upperbound}) improves on that in Proposition \ref{Hbound-trivial}  by a factor of at least $n2^{-(n-1)}(1+4\epsilon)^n$.

\begin{proof}[Proof of Theorem \ref{main-upper-bound}]
Recall that the rows of $\mathcal{A}^*_{p^k,n}$ are given by indicator functions of hyperplanes $H_b(a)$ with $a\in R^n$ and $b\in\mathbb{P}R^{n-1}$. Hence its rank is equal to the dimension of $\calh^n$ over $\ZZ/p\ZZ$, where $\calh^n$ was defined in (\ref{hyper-span}). By Lemma \ref{lemma-move-to-phi}, we further have
\begin{equation}\label{phi-span}
\calh^n={\rm span}\{\phi_\ell(\langle x,b\rangle): \ell\in[p^k],\   b\in\mathbb{P}R^{n-1}      \}.
\end{equation}
Any $b\in\PP R^{n-1}$ has a representative in $R^n$ with at least one component equal to $1$. Hence
\begin{equation} 
\label{A-to-H}
\text{rank}(\mathcal{A}_{p^k,n}^*)\leq n\cdot  \text{rank}(\mathbb{H}^{(n)}),
\end{equation}
where  $\mathbb{H}^{(n)}$ is the matrix with rows indexed by $(m,\widetilde{a})\in[p^k]\times R^{n-1}$, columns indexed by $x=(\widetilde{x},x_n)\in R^n$, and entries
\[ \mathbb{H}^{(n)}_{(m,\widetilde{a}),x}=\phi_m(\langle \widetilde{a},\widetilde{x}\rangle +x_n). \]

Let $\widetilde{a}=(a_1,\dots,a_{n-1})\in R^{n-1}$ and $m\in[p^k]$. 
By (\ref{phi-e1}) and then Proposition \ref{product-degree}, we have 
\begin{equation}\label{long-gamma}
\begin{split}
\phi_m(\langle \widetilde{a},\widetilde{x}\rangle+ x_n)
&= \sum_{\ell_1+\dots+\ell_{n-1}+\beta_n=m}\phi_{\ell_1}(a_1x_1)\cdots \phi_{\ell_{n-1}}(a_{n-1}x_{n-1})\phi_{\beta_n}(x_n) \\
&= \sum_{\ell_1+\dots+\ell_{n-1}+\beta_n=m}\sum_{\widetilde{\alpha},\widetilde{\beta}}
\gamma(\widetilde{\ell},\widetilde{\alpha},\widetilde{\beta}) \phi_{\widetilde{\alpha}}(\widetilde{a})\phi_\beta(x),
%
\end{split}
\end{equation}
where we write 
\begin{align*}
\widetilde{\alpha}&=(\alpha_1,\dots,\alpha_{n-1})\in[p^k]^{n-1},
\\
\beta&=(\widetilde{\beta},\beta_n)=(\beta_1,\dots,\beta_{n})\in [p^k]^{n},
\\
\widetilde{\ell}&=(\ell_1,\dots,\ell_{n-1})\in [p^k]^{n-1},
\end{align*}
and 
\begin{equation}\label{gamma-coeff}
\gamma(\widetilde{\ell},\widetilde{\alpha},\widetilde{\beta}) 
=\prod_{j=1}^{n-1} c_{\ell_j,(\alpha_j,\beta_j)},
\end{equation}
where $c_{\ell_j,(\alpha_j,\beta_j)}$ are the coefficients in the expansion (\ref{h-expand-1}).

Let $\Phi$ be the matrix with rows indexed by $\beta\in[p^k]^n$, columns indexed by $x\in R^n$, and entries 
$\Phi_{\beta,x}=\phi_\beta(x)$. Let also $\Psi$ be the block-diagonal matrix with rows indexed by $(m,\widetilde{a})\in[p^k]^n$, columns indexed by $(\mu,\widetilde{\alpha})\in R^n$, and entries 
$$
\Psi_{(m,\widetilde{a}),(\mu,\widetilde{\alpha})}={\bf 1}_{m=\mu}\phi_{\widetilde{\alpha}}(\widetilde{a}).
$$
Then (\ref{long-gamma}) can be written in matrix form as
\[ \mathbb{H}^{(n)}= \Psi \bbB^{(n)} \Phi, \]
where $\bbB^{(n)}$ is the matrix with rows indexed by $(m,\widetilde{\alpha})\in R^n$,
columns indexed by $\beta\in[p^k]^n$, and entries 
$$
\bbB^{(n)}_{(m,\widetilde{\alpha}),\beta}= \sum_{\ell_1+\dots+\ell_{n-1}+\beta_n=m} 
\gamma(\widetilde{\ell},\widetilde{\alpha},\widetilde{\beta}) .
$$
Since both $\Phi$ and $\Psi$ are nonsingular by Lemma~\ref{phibasis}, it follows that $\mathbb{H}^{(n)}$ and $\bbB$ have the same rank. The next proposition completes the proof of Theorem \ref{main-upper-bound}.
\end{proof}


\begin{proposition}
\label{UBn}
We have 
\[ \emph{rank}\left(\mathbb{B}^{(n)}\right)\leq  2{\lfloor p^k/2\rfloor+(n-1)(p-1)+n\choose n}.\] 
\end{proposition}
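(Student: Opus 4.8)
The matrix $\mathbb{B}^{(n)}$ has rows indexed by $(m,\widetilde{\alpha})\in R^n$ and columns by $\beta\in[p^k]^n$, and its entries are sums over $\ell_1+\dots+\ell_{n-1}+\beta_n=m$ of the products $\gamma(\widetilde{\ell},\widetilde{\alpha},\widetilde{\beta})=\prod_{j=1}^{n-1}c_{\ell_j,(\alpha_j,\beta_j)}$. The idea is to bound the rank by exhibiting a small set of columns that span the column space. By Proposition~\ref{product-degree}, the coefficient $c_{\ell_j,(\alpha_j,\beta_j)}$ vanishes unless $\alpha_j+\beta_j\leq \ell_j+2(p-1)$. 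Summing these $n-1$ inequalities and using $\ell_1+\dots+\ell_{n-1}=m-\beta_n\leq m$, we see that the $(m,\widetilde{\alpha})$-th entry of the $\beta$-th column is zero whenever
\[
|\widetilde{\alpha}|+|\widetilde{\beta}| > m+2(n-1)(p-1),
\]
equivalently whenever $|\widetilde{\alpha}|+|\beta| > |\widetilde{\alpha}| + (\text{stuff})$; more usefully, for a fixed row $(m,\widetilde{\alpha})$ the only nonzero columns $\beta$ satisfy $|\widetilde{\beta}|\leq m+2(n-1)(p-1)-|\widetilde{\alpha}|$, hence in particular $|\beta|=|\widetilde{\beta}|+\beta_n\leq m+2(n-1)(p-1)$. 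So the support of $\mathbb{B}^{(n)}$ is concentrated on columns with $|\beta|$ not too large; but that alone gives $\binom{p^k-1+2(n-1)(p-1)+n}{n}$, which is not good enough. The factor $\lfloor p^k/2\rfloor$ instead of $p^k$ must come from playing off the row index $m$ against the column index $\beta$.

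\textbf{The key dichotomy.} Fix a column $\beta\in[p^k]^n$. The nonzero entries in this column occur in rows $(m,\widetilde{\alpha})$ with $|\widetilde{\alpha}|\leq m+2(n-1)(p-1)-|\widetilde{\beta}|$ and also $m\geq \beta_n$ (since $\beta_n$ appears as one of the summands $\ell_1+\dots+\ell_{n-1}+\beta_n=m$ with all $\ell_j\geq 0$). The plan is to split the columns into two groups according to whether $|\beta|\leq \lfloor p^k/2\rfloor + (n-1)(p-1)$ or not. For the first group, the number of such $\beta$ is at most $\binom{\lfloor p^k/2\rfloor+(n-1)(p-1)+n}{n}$ by Lemma~\ref{starsbars}, giving the stated bound on the rank contribution. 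For the second group, where $|\beta|$ is large, I want to show that these columns contribute a rank of at most the same quantity — by a symmetric argument. The natural symmetry to exploit is a "complementation" $\beta_n \mapsto p^k-1-\beta_n$ (or a reflection on each digit), combined with the fact that the row parameter $m$ ranges over all of $[p^k]$: when $|\beta|$ is large, the constraint $m\geq \beta_n$ forces $m$ into the upper half of $[p^k]$, and reindexing $m' = p^k-1-m$ (together with a corresponding transformation of $\widetilde{\alpha}$ and $\widetilde{\beta}$) should identify the large-$\beta$ block with a copy of a small-$\beta$-type block, whose rank is again bounded by $\binom{\lfloor p^k/2\rfloor+(n-1)(p-1)+n}{n}$. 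Adding the two contributions yields the factor $2$ in the statement.

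\textbf{Carrying it out.} Concretely, I would proceed as follows. First, record the support estimate: $\mathbb{B}^{(n)}_{(m,\widetilde{\alpha}),\beta}=0$ unless $\beta_n\leq m$ and $|\widetilde{\alpha}|+|\widetilde{\beta}|\leq m+2(n-1)(p-1)$, using only Proposition~\ref{product-degree} and nonnegativity of the $\ell_j$. Second, partition $[p^k]^n = \mathcal{C}_1\sqcup\mathcal{C}_2$ where $\mathcal{C}_1 = \{\beta: |\beta|\leq \lfloor p^k/2\rfloor+(n-1)(p-1)\}$, and bound $|\mathcal{C}_1|$ by Lemma~\ref{starsbars} (with $M=p^k$, $L=\lfloor p^k/2\rfloor+(n-1)(p-1)$, noting $L<M$ since $(n-1)(p-1)<\lceil p^k/2\rceil$ when... this needs a check, but holds in the relevant range, and otherwise the trivial bound of Proposition~\ref{hyper-to-phi} already suffices). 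Third, for $\beta\in\mathcal{C}_2$, show the columns indexed by $\mathcal{C}_2$ span a space of dimension at most $|\mathcal{C}_1|$ by exhibiting an explicit bijection $\beta\mapsto\beta^*$ from $\mathcal{C}_2$ into (a set of size) $\mathcal{C}_1$, together with an invertible change of row coordinates $(m,\widetilde{\alpha})\mapsto(m^*,\widetilde{\alpha}^*)$, under which $\mathbb{B}^{(n)}$ restricted to $\mathcal{C}_2$-columns becomes (up to row permutation and nonzero rescaling) a submatrix of $\mathbb{B}^{(n)}$ restricted to $\mathcal{C}_1$-columns. The natural candidate is digit-wise complementation $x\mapsto (p-1)-x$ on each $p$-adic digit; the identity $\binom{x}{m}\equiv (-1)^m\binom{(p-1)-m}{(p-1)-x} \cdot(\text{stuff})$ at the level of Lucas's theorem should make the $c_{\ell,(\alpha,\beta)}$ transform predictably under such a complementation.

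\textbf{Main obstacle.} The hard part is step three — making the complementation symmetry precise. The coefficients $c_{m,\alpha}$ in Proposition~\ref{product-degree} are not given by a clean closed form, so I cannot simply compute how they transform under $m\mapsto p^k-1-m$ and the analogous digit-flips of $\alpha$. One route is to go back to the definition $c_{m,\alpha} = A_{m,\alpha}$-type expansions via Corollary~\ref{a-expansion} and Lemma~\ref{deriv-at-0}, and use the digit-factorization $\phi_m(x)=\prod_i\phi_{m_i}(x_i)$ together with the single-digit symmetry $\phi_{p-1-j}(p-1-x_i)$-versus-$\phi_j(x_i)$ to build the multi-digit transformation. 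Alternatively — and this may be cleaner — one avoids an explicit symmetry and instead argues abstractly: the map $f\mapsto \bar f$, where $\bar f(x)=f(u-x)$ for $u=(p^k-1,\dots,p^k-1)$ (all-ones vector in each digit), is a linear automorphism of the space of functions $R^n\to\ZZ/p\ZZ$ that sends $\Omega^n_d$ to itself (since reflections preserve $(d{+}1)$-nullness, by Proposition~\ref{prop:phiNull}), and it maps the function $\phi_m(\langle\widetilde a,\widetilde x\rangle+x_n)$ to a closely related hyperplane-type function; chasing this through the factorization $\mathbb{H}^{(n)}=\Psi\mathbb{B}^{(n)}\Phi$ should identify the large-degree part of the column space of $\mathbb{B}^{(n)}$ with the image under this automorphism of the small-degree part, yielding the factor of $2$ without ever writing down $c_{m,\alpha}$ explicitly. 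I expect this abstract reflection argument, rather than a brute-force coefficient computation, to be the intended and more robust path.
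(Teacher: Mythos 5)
Your vanishing estimate is exactly the paper's: nonzero entries of $\mathbb{B}^{(n)}$ require $|\widetilde\alpha|+|\beta|\leq m+2(n-1)(p-1)$, and this is the key input. Your bound on the $\mathcal{C}_1$-columns (counting $\beta$ with $|\beta|\leq\lfloor p^k/2\rfloor+(n-1)(p-1)$ via Lemma~\ref{starsbars}) is also correct and matches the paper's treatment of one of the two pieces. But the way you propose to handle the remaining $\mathcal{C}_2$-columns is a genuine gap: the complementation symmetry you invoke is neither carried out nor needed, and as you yourself note, the coefficients $c_{m,\alpha}$ from Proposition~\ref{product-degree} are not given in closed form, so there is no evident reason they should transform cleanly under digit-wise reflection. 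The abstract ``reflection preserves nullity'' argument would at best tell you that the image space has the same dimension bound, but it does not set up the identification of column spaces that you would need.

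What saves the argument is much simpler, and is exactly what the paper does on the other side of its split. The vanishing estimate gives a constraint that can be read in two directions. You already used it one way: fixing a row and asking which columns can be hit. Read it the other way: for a column $\beta\in\mathcal{C}_2$ (so $|\beta|>\lfloor p^k/2\rfloor+(n-1)(p-1)$), a nonzero entry forces $m-|\widetilde\alpha|\geq|\beta|-2(n-1)(p-1)>\lfloor p^k/2\rfloor-(n-1)(p-1)$. Thus the submatrix of $\mathbb{B}^{(n)}$ with columns in $\mathcal{C}_2$ has nonzero rows only for $(m,\widetilde\alpha)$ satisfying $(p^k-1-m)+|\widetilde\alpha|<p^k-1-\lfloor p^k/2\rfloor+(n-1)(p-1)$, and bounding the rank by the number of nonzero rows (again via Lemma~\ref{starsbars}, applied to $p^k-1-m$ and the $\alpha_i$) gives $\binom{p^k-\lfloor p^k/2\rfloor+(n-1)(p-1)-2+n}{n}\leq\binom{\lfloor p^k/2\rfloor+(n-1)(p-1)+n}{n}$. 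Adding the two pieces yields the stated bound. The paper implements the same idea with a split of \emph{rows} by the quantity $m-|\widetilde\alpha|$ rather than your split of columns by $|\beta|$; one piece is bounded by nonzero columns, the other by nonzero rows. The two decompositions are essentially equivalent, and neither requires any symmetry of the coefficients. Replace your step three with this row-count and the proof is complete.
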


\begin{proof}
We claim that $\bbB^{(n)}_{(m,\widetilde{\alpha}),\beta}= 0$ for all $m,\widetilde{\alpha},\beta$ such that
\begin{equation}\label{zeroterm}
\sum_{j=1}^{n-1}\alpha_j + \sum_{j=1}^{n}\beta_j > m+2(n-1)(p-1).
\end{equation}
Indeed, assume that $m,\widetilde{\alpha},\beta$ satisfy (\ref{zeroterm}), and consider a contributing term 
$$\gamma(\widetilde{\ell},\widetilde{\alpha},\widetilde{\beta}) =\prod_{j=1}^{n-1} c_{\ell_j,(\alpha_j,\beta_j)}
\hbox{ with }\ell_1+\dots+\ell_{n-1}+\beta_n=m.
$$
By (\ref{zeroterm}), we have
$$
\sum_{j=1}^{n-1}(\alpha_j + \beta_j )+\beta_n > \sum_{j=1}^{n-1}\ell_j +\beta_n + 2(n-1)(p-1).
$$
Hence there is at least one $j$ such that $\alpha_j + \beta_j >\ell_j+2(p-1)$. By Proposition \ref{product-degree}, we have $c_{\ell_j,(\alpha_j,\beta_j)}=0$ for that $j$, so that $\gamma(\widetilde{\ell},\widetilde{\alpha},\widetilde{\beta}) =0$.
Since this is true for all contributing terms, the claim follows.

Write $|\widetilde{\alpha}|=\sum_{j=1}^{n-1}\alpha_j$ and $|\beta|=\sum_{j=1}^{n}\beta_j $ for short. 
We choose $\lambda\in[p^k]$, to be determined, and decompose 
$\mathbb{B}^{(n)}$ into two matrices, $\mathbb{B}_{\leq \lambda}^{(n)}$ and $\mathbb{B}^{(n)}_{>\lambda}$, with rows and columns indexed as for $\mathbb{B}^{(n)}$. Let $\mathbb{B}^{(n)}_{\leq\lambda}$ be defined so that for any row indexed by $(m,\widetilde{\alpha})$ with $m-|\widetilde{\alpha}|\leq \lambda$, the $(m,\widetilde{\alpha})$-row of $\mathbb{B}^{(n)}_{\leq\lambda}$ matches the $(m,\widetilde{\alpha})$-row of $\mathbb{B}^{(n)}$. All other rows are zero. Then define $\mathbb{B}^{(n)}_{>\lambda}$ so that
\begin{equation}
\label{matrixsum}
\mathbb{B}^{(n)}=\mathbb{B}^{(n)}_{\leq\lambda}+\mathbb{B}^{(n)}_{>\lambda}.
\end{equation}

First consider $\mathbb{B}^{(n)}_{\leq\lambda}$. All its non-zero entries lie in rows indexed by $(m,\widetilde{\alpha})$ with $m-|\widetilde{\alpha}|\leq\lambda$. By~(\ref{zeroterm}), any column indexed by $\beta$ satisfying $|\beta|>\lambda+2(n-1)(p-1)$ is the zero vector.
Thus bounding the rank of the matrix by its number of non-zero columns, we obtain
\begin{align*}
\text{rank}(\mathbb{B}^{(n)}_{\leq\lambda})&\leq \#\{\beta\in[p^k]^n:|\beta|\leq \lambda+2(n-1)(p-1)\}
\\
&={\lambda+2(n-1)(p-1)+n\choose n}
\end{align*}
by Lemma~\ref{starsbars}.

Now we consider $\mathbb{B}^{(n)}_{>\lambda}$; for this, we bound the rank of the matrix by its number of non-zero rows: 
\begin{align*}
\text{rank}(\mathbb{B}^{(n)}_{>\lambda})&\leq \#\{(m,\widetilde{\alpha})\in[p^k]\times[p^k]^{n-1}:m-
|\widetilde{\alpha}|>\lambda\} \\
&= \#\{(m,\widetilde{\alpha})\in[p^k]\times[p^k]^{n-1}:(p^k-1-m)+|\widetilde{\alpha}|<p^k-1-\lambda\} \\
&= {p^k-\lambda-2+n\choose n}
\end{align*}
by Lemma~\ref{starsbars} applied with $\ell_1=p^k-1-m$ and $\ell_i=\alpha_i$ for $i>1$. 

Taking $\lambda=\lfloor p^k/2\rfloor-(n-1)(p-1)$, and applying the subadditivity of rank to~(\ref{matrixsum}), we see that
\begin{align*} 
\text{rank}(\mathbb{B}^{(n)})&\leq {\lfloor p^k/2\rfloor+(n-1)(p-1)+n\choose n}+{p^k-\lfloor p^k/2\rfloor+(n-1)(p-1)-2+n\choose n} \\
&\leq 2\cdot{\lfloor p^k/2\rfloor+(n-1)(p-1)+n\choose n}.
\end{align*}

\end{proof}






\section{Geometric test for hyperplane functions}
\label{sec-fan}

Theorem~\ref{main-upper-bound} shows that, in general, the linear span of affine hyperplane functions is strictly smaller than the span of phi functions of degree less than $p^k$. 
In this section we develop a geometric test for determining which phi functions are not in the span of hyperplane functions. In Subsection~\ref{subsec-geo-test-2} we prove a specific case of the test in dimension $n=2$, and then use it to show that a particular phi function is not in the span of hyperplane functions. Afterwards, we prove the test in generality.  The full result is given in Theorem~\ref{thm-geotest}.

Recall that $R=\ZZ/p^k\ZZ$. To simplify the multiscale notation below, we will also write $R_\ell =\ZZ/p^\ell\ZZ$ for $1\leq \ell\leq k$, so that $R_k=R$ and $R_1=\ZZ/p\ZZ$. 
A {\em line} in a direction $b\in R^n$ is a set of the form
$$
L_b(a) =\{a + t b:\ t\in R\} \hbox{ for some }a\in R^n.
$$
If $b$ is nondegenerate, $L_b(a) $ has $|R|=p^k$ distinct elements.

In $R^n$, we define the \textit{canonical directions} to be elements of the set $\mathcal{B}=\bigcup_{i=1}^n\mathcal{B}_i$ where 
\[ \mathcal{B}_i=\{(p\ell_1,\dots,p\ell_{i-1},1,\ell_{i+1},\dots,\ell_n):\ell_i\in R \}.\]
Then any line $L\subset R^n$ may be written in the form $\{a+tb:t\in R\}$ for a unique direction vector $b\in\mathcal{B}$. Henceforth, when we refer to the direction of a line, this direction is an element of $\mathcal{B}$. For $b,b'\in\mathcal{B}$, we define the \textit{$p$-adic angle} between $b$ and $b'$ to be $\angle(b,b')=p^{-s}$ where $p^s\parallel (b-b')$.
If $L$ and $L'$ are lines with directions $b$ and $b'$ respectively, we define the angle between them to be
$\angle(L,L')=\angle(b,b')$.

For $0\leq \ell\leq k$, define the projection map $\pi_{\ell}:R^n\rightarrow R_\ell^n$ by
\[ \pi_{\ell}(x)=x\bmod{p^\ell}.\]
Clearly, the mappings $\pi_{\ell}$ are linear.
For $0\leq \ell\leq k$, define a \textit{cube on scale $\ell$} to be a set of the form 
\[ Q=Q_\ell(x)=\{y\in R^n:\pi_\ell(y)=\pi_\ell(x)\}\subset R^n \]
for a fixed $x\in R^n$. In dimension $n=2$, we refer to $Q$ as a square. Note that a cube on scale $0$ is the entire $R^n$, and a cube on scale $k$ is a single point.

Next, we will define a type of set that we call a \textit{fan.} Our geometric test will show that hyperplane functions are orthogonal to characteristic functions of fans. 

\begin{definition}
\label{def-fan}
{\bf (Fans in dimension $n=2$)}  Let $0\leq \ell\leq p-2$. Let  $L_0,\dots,L_p$ be lines passing through a fixed cube $Q$ on scale $\ell+1$ and satisfying $\angle(L_i,L_j)=1$ for each $i\neq j$.  Let $Q'$ be the cube on scale $\ell$ containing $Q$.  Then the set
\[ X=\bigcup_{i=0}^p(L_i\cap Q')\setminus Q \]
is a \emph{fan} on scale $\ell$.
\end{definition}

For dimension $n>2$, we will need a variant of the above configuration involving a $(p+1)$-tuple of lines in a neighbourhood of a 2-plane. We pause for a moment to define the relevant concepts. A \emph{2-plane} in $R^n$ is the linear span over $R$ of any two vectors $u,v\in\mathcal{B}$ such that $\angle(u,v)=1$. 
For a set $S\subset R^n$, and for $j\in[k+1]$, we define the $p^{-j}$-neighbourhood of $S$ by
$$
\mathcal{N}_j(S)=\{x\in R^k:\ \hbox{dist}(x,S)\leq p^{-j}\},
$$
where we say that dist$(x,S)=p^{-\ell}$ if $\ell=\max\{j:\ p^j|(x-s)$ for some $s\in S\}$.

\begin{definition}
\label{def-fan-general}
{\bf (Fans in dimension $n>2$)}
Let $Q'$ be a cube on scale $\ell$ in $R^n$. Define
$$
\Pi:=Q'\cap \mathcal{N}_{\ell+1}(\Pi_0),
$$
where $\Pi_0$ is a 2-plane passing through some point $a\in Q'$. Let $Q=Q_{\ell+1}(a)\subset Q'\cap\Pi$ be the cube on scale $\ell+1$ containing $a$. 
Let $L_0,\dots,L_p\subset R^n$ be lines that pass through $Q$, make pairwise angles $1$, and such that $L_j\cap Q'\subset \Pi$ for each $j$. 
Then 
\[ X=\bigcup_{i=0}^p(L_i\cap Q')\setminus Q \]
is a \emph{fan} on scale $\ell$.

\end{definition}

\begin{theorem}
\label{thm-geotest}
Let $f\in\mathcal{H}^n$ be a hyperplane function, and let $X\subset R^n$ be a fan. Then 
\[ \sum_{x\in R^n}f(x)\one_X(x)=0\bmod{p}.\]
\end{theorem}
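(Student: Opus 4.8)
The plan is to reduce the theorem to a statement about a single hyperplane function $\one_{H_b(a)}$, and then to a local computation on the relevant cubes. Since $\mathcal{H}^n$ is the linear span of the $\one_{H_b(a)}$, and orthogonality to $\one_X$ over $\ZZ/p\ZZ$ is preserved under linear combinations, it suffices to prove $\sum_{x}\one_{H_b(a)}(x)\one_X(x)=0 \bmod p$ for every affine hyperplane $H=H_b(a)$ and every fan $X$. Write $X=\bigcup_{i=0}^p(L_i\cap Q')\setminus Q$, where $L_0,\dots,L_p$ pass through the scale-$(\ell+1)$ cube $Q$, make pairwise $p$-adic angle $1$, and satisfy $L_i\cap Q'\subset\Pi$ in the general case. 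Then $\sum_x\one_H(x)\one_X(x)=\sum_{i=0}^p |H\cap L_i\cap Q'| - (p+1)|H\cap Q|$, all counts taken mod $p$, using that the $L_i$ are pairwise disjoint outside $Q$ (they agree on $Q$ since they all pass through $Q$ and $Q$ is a cube on scale $\ell+1$, which is where directions of angle $1$ become indistinguishable). So the real content is: (a) $|H\cap L_i\cap Q'|$ is the same mod $p$ for all $i$, i.e. independent of the direction $b_i$ within the allowed family; and (b) that common value is $\equiv |H\cap Q|\bmod p$ when one accounts for the overcounting, i.e. $(p+1)\cdot(\text{common value}) \equiv (p+1)|H\cap Q|$, which since $p+1\equiv 1$ reduces to showing the common value of $|H\cap L_i\cap Q'|$ equals $|H\cap Q|$ mod $p$.

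The key mechanism is the standard ``parallel lines'' fact alluded to before the statement of the theorem: for a hyperplane $H$ and a line $L$ not contained in $H$, the intersection $H\cap L$ is either empty or a coset of $p^s R$ inside $L\cong R$ for the appropriate $s$, so $|H\cap L|\in\{0, p^s\}$; and $|H\cap L|$ depends only on the $p$-adic data of $L$ relative to $H$ up to the relevant scale. Concretely, I would compute: a line $L=\{a'+tb':t\in R\}$ meets $H_b(a)$ in the solution set of $\langle a'-a,b\rangle + t\langle b',b\rangle=0$ in $R$; if $p^s\parallel\langle b',b\rangle$, this has $p^s$ solutions when $p^s\mid\langle a'-a,b\rangle$ and none otherwise. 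The crucial observation is that for two directions $b_i,b_j$ in the fan with $\angle(b_i,b_j)=1$ (so $b_i\equiv b_j\bmod p$, but also, by the structure of $\mathcal{B}_m$, $b_i-b_j$ is divisible by $p$ but we need more — actually $\angle=1$ means $p\parallel(b_i-b_j)$, wait, the convention $\angle(b,b')=p^{-s}$ with $p^s\parallel(b-b')$ gives $\angle=1\Leftrightarrow p^0\parallel(b-b')$, i.e. $b\not\equiv b'\bmod p$). Hmm — so I need to look carefully: since all $L_i$ pass through $Q$, a cube on scale $\ell+1$, the directions are constrained, and the fact that $L_i\cap Q'\subset\Pi$ (an $(\ell+1)$-neighbourhood of a $2$-plane) pins down enough. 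The point will be that $\langle b',b\rangle\bmod p^{\ell+1}$ is the same for all the $b'$ in the fan (because they agree modulo $p^{\ell+1}$ on the subspace spanned by $\Pi$), while on scale $\ell$ we are inside the single cube $Q'$ so $\langle a'-a,b\rangle$ is determined mod $p^\ell$.

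The main obstacle, and where I would concentrate the work, is the bookkeeping of scales: one must show that whatever happens to $H\cap L_i$ at scales $\leq\ell$ is common to all $i$ (forced by $Q'$) and at scales $\geq\ell+1$ is also common to all $i$ (forced by $Q$ together with the $2$-plane neighbourhood condition), so the \emph{only} scale at which the directions differ is scale exactly between $\ell$ and $\ell+1$ — and there the count is absorbed by $Q$. Precisely, I expect to prove: (i) if $L_i$ misses $H$, so do all $L_j$ (and then also $H\cap Q=\emptyset$, since $Q\subset L_i$), and both sides vanish; (ii) if $L_i$ meets $H$, then $|H\cap L_i\cap Q'|=|H\cap Q|$ for every $i$. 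For (ii), one uses that $H\cap L_i$ is a coset of $p^{s}R$ in $L_i$ where $s=v_p(\langle b_i,b\rangle)$; if $s\leq \ell$ then $H\cap L_i$ already has the right cardinality inside $Q$ itself (so intersecting with $Q'$ changes nothing relative to $Q$) — one checks $H\cap L_i\cap Q = H\cap L_i\cap Q'$ in this regime; and the case $s\geq\ell+1$ cannot occur for a line meeting $H$ and passing through $Q$ with $Q\not\subset H$, or if it does, again $H\cap Q$ accounts for it. Summing, $\sum_{i=0}^p|H\cap L_i\cap Q'| - (p+1)|H\cap Q| = (p+1)|H\cap Q| - (p+1)|H\cap Q| = 0$. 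I would first work out the $n=2$ case cleanly (as the excerpt says is done in a preliminary subsection), isolating the one-variable identity about $|H\cap L|$ as a lemma, and then note that in the general $n>2$ case the neighbourhood condition $L_i\cap Q'\subset\Pi=Q'\cap\mathcal{N}_{\ell+1}(\Pi_0)$ is exactly what lets the same computation go through verbatim inside the $2$-plane $\Pi_0$, since all the inner products $\langle b_i,b\rangle$ and $\langle a'-a,b\rangle$ relevant mod $p^{\ell+1}$ only see the $\Pi_0$-components.
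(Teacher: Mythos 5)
Your argument treats the scale-$(\ell+1)$ cube $Q$ as though it were a single point, and this causes several errors. The lines $L_i$ do not ``agree on $Q$'': each $L_i\cap Q$ is a line of $p^{k-\ell-1}$ points inside the cube $Q$ (which has $p^{(k-\ell-1)n}$ points), and the rescaled lines $\iota_Q(L_i\cap Q)$ have directions $\pi_{k-\ell-1}(b_i)$ which are still pairwise distinct mod $p$, hence distinguishable. So your decomposition $|H\cap X|=\sum_i|H\cap L_i\cap Q'|-(p+1)|H\cap Q|$ is wrong; the correct one is $\sum_i|H\cap L_i\cap Q'|-\sum_i|H\cap L_i\cap Q|$. (Also the aside ``$Q\subset L_i$'' in your step (i) is false unless $\ell+1=k$.) Moreover, claims (a), (b), and ``$\langle b',b\rangle\bmod p^{\ell+1}$ is the same for all $b'$ in the fan'' are all false: since the $b_i$ are pairwise distinct mod $p$ (angle $1$), at most one of the $p+1$ inner products $\langle b_i,v\rangle$ (where $v$ is the normal of $H$) vanishes mod $p$, and for that distinguished index $i_0$ one has $|H\cap L_{i_0}\cap Q'|\equiv 0\bmod p$, while $|H\cap L_i\cap Q'|=1$ for the other $p$ indices. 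The counts genuinely depend on $i$, and they do not all equal $|H\cap Q|$.

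What actually makes $|H\cap X|$ vanish mod $p$ is a dichotomy, not a constancy. The $p$ transverse lines (those with $\langle b_i,v\rangle\not\equiv 0\bmod p$) each meet $H$ in exactly one point of $Q'$, and these $p$ points are either all inside $Q$ (when $H\cap Q\neq\emptyset$, forced by the angle-$1$ unique-intersection lemma) or all outside $Q$ (when $H\cap Q=\emptyset$); the one nearly-parallel line $L_{i_0}$ contributes $\equiv 0\bmod p$ inside both $Q'$ and $Q$. In the first case the $p$ transverse contributions are exactly subtracted off; in the second they contribute $p\equiv 0$. The paper's proof establishes this first for $n=2$ at scale $0$ (Proposition~\ref{HPtest2}), then for general $n$ by confining everything to the $p^{-1}$-neighbourhood of a $2$-plane (Proposition~\ref{geotest-scale1-n}) so that $\pi_1(H)\cap\pi_1(\Pi)$ is at most a line and the dichotomy persists, and finally handles general $\ell$ by rescaling the whole configuration into $R_{k-\ell}^n$ via $\iota_{Q'}$ (Proposition~\ref{geo-test-for-H}). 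Your intuition that ``only the transitional scale matters'' is along the right lines, but the mechanism is the case split on $H\cap Q$, not equality of the counts over $i$.
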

To prove the theorem, it suffices to prove that $|H\cap X|=0\bmod{p}$ for any hyperplane $H$ and any fan $X$. We prove this 
in Proposition~\ref{geo-test-for-H}.

\subsection{Preliminary lemmas}

Let $Q$ be a cube on scale $\ell$. For $x\in Q$, write $x=x'+p^\ell x''$ with $x'\in[p^\ell]$ and $x''\in[p^{k-\ell}]$. Note that if  $x,y\in Q$, then (with the obvious notation) we have $y'=x'$. We may therefore identify $Q$ with $R_{k-\ell}^n$ via the map $\iota_Q:Q\rightarrow R_{k-\ell}^n$ defined by
\[ \iota_Q(x'+p^\ell x'')=x''.\]

\begin{lemma}
\label{idQasZp} {\bf (Properties of $\iota_Q$)}  
Let $Q$ be a cube on scale $\ell$ for some $0\leq \ell\leq k-1$. Then:
\begin{enumerate}
\item[(i)] If $L\subset R^n$ is a line in direction $b$ intersecting $Q$, then $\iota_Q(Q\cap L)$ is a line in direction $\pi_{k-\ell}(b)$ in $R_{k-\ell}^n$.

\item[(ii)] If $H\subset R^n$ is a hyperplane with normal direction $b$ intersecting $Q$, then $\iota_Q(Q\cap H)$ is a hyperplane with normal direction $\pi_{k-\ell}(b)$ in $R_{k-\ell}^n$. 

\item[(iii)] If $\Pi\subset R^n$ is a $2$-plane intersecting $Q$, then $\iota_{Q}(\Pi)$ is a $2$-plane in $R_{k-\ell}^n$.

\item[(iv)] If $S\subset Q$, and if $j\geq \ell$, then $\iota_Q(\mathcal{N}_j(S))=\mathcal{N}_{j-\ell}(\iota_Q(S))$.
\end{enumerate}
\end{lemma}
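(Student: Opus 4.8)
The plan is to verify each of the four claims in Lemma~\ref{idQasZp} by unwinding the definition of $\iota_Q$ and using the fact that it is (essentially) an affine isomorphism $Q \to R_{k-\ell}^n$ whose linearization is the reduction map $\pi_{k-\ell}$. First I would establish the key structural fact: if $x = x' + p^\ell x''$ and $y = y' + p^\ell y''$ lie in $Q$, then $y' = x'$ (since $\pi_\ell(x) = \pi_\ell(y)$), so that for $x,y \in Q$ we have $x - y = p^\ell(x'' - y'')$ and hence $\iota_Q(x) - \iota_Q(y) = x'' - y'' = p^{-\ell}(x-y) \bmod p^{k-\ell}$, interpreting division by $p^\ell$ appropriately. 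In other words, $\iota_Q$ translates a point of $Q$ to the origin and then rescales by $p^{-\ell}$; this single observation drives all four parts.

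For (i), write $L = \{a + tb : t \in R\}$ with $a \in Q$. A point $a + tb$ lies in $Q$ iff $p^\ell \mid tb$, and since $b$ is a nondegenerate direction this happens iff $p^\ell \mid t$, i.e. $t = p^\ell s$ with $s \in [p^{k-\ell}]$. Then $\iota_Q(a + p^\ell s b) = \iota_Q(a) + s\,\pi_{k-\ell}(b)$, so $\iota_Q(Q \cap L) = \{\iota_Q(a) + s\,\pi_{k-\ell}(b) : s \in R_{k-\ell}\}$, which is a line in direction $\pi_{k-\ell}(b)$; one should also note $\pi_{k-\ell}(b)$ is still nondegenerate since $b$ has an invertible coordinate. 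For (ii), write $H = \{x : \langle x - a, b\rangle = 0\}$ with $a \in Q$; for $x = a + p^\ell x'' \in Q$ the defining equation reads $p^\ell \langle x'', b\rangle = 0 \bmod p^k$, i.e. $\langle x'', b\rangle = 0 \bmod p^{k-\ell}$, i.e. $\langle \iota_Q(x), \pi_{k-\ell}(b)\rangle = 0$ in $R_{k-\ell}$; again $\pi_{k-\ell}(b)$ remains a nondegenerate direction. Part (iii) is the analogous computation for $\Pi = \Span_R(u,v)$ shifted to pass through a point of $Q$: the same rescaling sends it to $\Span_{R_{k-\ell}}(\pi_{k-\ell}(u), \pi_{k-\ell}(v))$, and one checks $\angle(\pi_{k-\ell}(u),\pi_{k-\ell}(v)) = 1$ is inherited, so the image is a $2$-plane. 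Part (iv) follows because $\dist(x,y) = p^{-i}$ means $p^i \parallel (x-y)$, and $\iota_Q$ rescales differences by $p^{-\ell}$, so $p^i \parallel (x-y)$ becomes $p^{i-\ell} \parallel (\iota_Q(x) - \iota_Q(y))$; hence $\dist(\iota_Q(x), \iota_Q(y)) = p^{-(i-\ell)}$, and the neighbourhood indices shift by $\ell$ exactly as claimed (the hypothesis $j \geq \ell$ ensures $j - \ell \geq 0$ so the right-hand side makes sense).

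I expect the main obstacle to be bookkeeping rather than conceptual: one must be careful that "division by $p^\ell$" is well-defined on the relevant subgroups (it is, since we only ever divide elements of $p^\ell R$ and land in $R_{k-\ell}$), and one must consistently track that nondegeneracy of directions and the angle-$1$ condition for $2$-planes are preserved under $\pi_{k-\ell}$ — this uses $\ell \leq k-1$ so that at least one more scale survives. The cleanest write-up packages the common computation as the initial observation that $\iota_Q$ intertwines the affine structure of $Q$ with that of $R_{k-\ell}^n$, after which (i)--(iv) are short corollaries.
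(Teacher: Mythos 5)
Your proof is correct and follows essentially the same route as the paper's: pick a base point $a\in Q$ on the object, parametrize $Q\cap L$ (resp.\ $Q\cap H$, $Q\cap\Pi$) using increments of the form $p^\ell y''$, and observe that $\iota_Q$ rescales differences by $p^{-\ell}$, with (iv) falling out of the same divisibility observation. The only cosmetic difference is that you spell out why $p^\ell\mid tb$ forces $p^\ell\mid t$ via nondegeneracy of $b$, a step the paper states without comment.
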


\begin{proof}

Pick some point $a\in Q\cap L$, and suppose $a=a'+p^{\ell}a''$ with $a'\in[p^{\ell}]$. Then 
\[ Q\cap L =\{a+(\lambda p^\ell) b :\lambda\in R_{k-\ell} \}, \]
and so
\[ \iota_Q(Q\cap L)=\{a''+\lambda\pi_{k-\ell}(b):\lambda\in R_{k-\ell}\}\subset R_{k-\ell}^n.\]
Now suppose $H=\{x:\langle x-c,b\rangle=0\}$, and $a\in H\cap Q$. Then 
\[ Q\cap H =\{a+y:y=p^\ell y'', \; \langle y, b\rangle=0\bmod{p^k}\} =\{a+p^\ell y'': \langle y'', b\rangle=0\bmod{p^{k-\ell}}\} \]
and so
\[ \iota_Q(Q\cap H)=\{a''+y'':\langle y'',\pi_{k-\ell}(b)\rangle=0\}\subset R_{k-\ell}^n. \]
The proof of (iii) is similar. Finally, (iv) follows from the observation that for $x,y\in Q$ and for $i\geq\ell$, 
\[ p^i\mid(x-y) \quad \text{if and only if} \quad p^{i-\ell}\mid(\iota_Q(x)-\iota_Q(y)).\]
\end{proof}

\begin{lemma}
\label{lem-pimap}
{\bf (Properties of $\pi_\ell$)} For $0\leq \ell\leq k-1$, the following statements hold:
\begin{enumerate}
\item[(i)] If $L\subset R^n$ is a line in direction $b$, then $\pi_\ell(L)\subset R_\ell^n$ is a line in direction $\pi_\ell(b)$. In particular, if $\angle(L,L')=1$ in $R^n$, then $\angle(\pi_\ell(L), \pi_\ell(L'))=1$ in $R_\ell^n$.
\item[(ii)] If $H\subset R^n$ is a hyperplane with normal direction $b$, then $\pi_\ell(H)\subset R_\ell^n$ is a hyperplane with normal direction $\pi_\ell(b)$.
\item[(iii)] If $\Pi\subset R^n$ is a 2-plane spanned by $b,b'\in\mathcal{B}$, then $\pi_\ell(\Pi)\subset R_\ell^n$ is a 2-plane spanned by $\pi_\ell(b),\pi_\ell(b')$.
\item[(iv)] If $ S\subset R^n$, and if $j\geq \ell$, then $\pi_\ell(\mathcal{N}_j(S))= \pi_\ell(S)$.

\end{enumerate}
\end{lemma}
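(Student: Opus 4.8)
The plan is to prove all four parts in parallel with the corresponding items of Lemma~\ref{idQasZp}, using that $\pi_\ell$ is additive and satisfies $\pi_\ell(tx)=\pi_\ell(t)\pi_\ell(x)$ for $t\in R$ and $x\in R^n$ (with the right-hand side interpreted in $R_\ell^n$), together with two elementary facts: the image under $\pi_\ell$ of a unit of $R$ is a unit of $R_\ell$, and if $p^s\parallel c$ with $s<\ell$ then $p^s\parallel\pi_\ell(c)$. One may assume $\ell\geq1$, since $R_0$ is the zero ring and every statement is vacuous for $\ell=0$.

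For (i), writing $L=\{a+tb:t\in R\}$ and using surjectivity of $\pi_\ell:R\to R_\ell$, one gets $\pi_\ell(L)=\{\pi_\ell(a)+s\,\pi_\ell(b):s\in R_\ell\}$; here $\pi_\ell(b)$ is a nondegenerate direction because a unit coordinate of $b$ maps to a unit coordinate of $\pi_\ell(b)$, and if $b$ is a canonical direction in $\mathcal{B}_i$ then $\pi_\ell(b)$ is again canonical in $\mathcal{B}_i$ (its $i$-th coordinate is still $1$, its earlier coordinates stay in $pR_\ell$). The angle statement is then immediate: $\angle(L,L')=1$ means $b-b'$ has a unit coordinate, hence so does $\pi_\ell(b)-\pi_\ell(b')$, giving $\angle(\pi_\ell(L),\pi_\ell(L'))=1$. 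Part (iii) is the same computation for $\Pi=\{su+tv:s,t\in R\}$, with the angle-$1$ condition on $u,v$ reducing exactly as above and with $\pi_\ell(u),\pi_\ell(v)$ again canonical. Part (iv) is a one-line set computation: if $j\geq\ell$ and $p^j\mid(x-s)$ then $\pi_\ell(x)=\pi_\ell(s)$, so $\pi_\ell(\mathcal{N}_j(S))\subseteq\pi_\ell(S)$, while $S\subseteq\mathcal{N}_j(S)$ gives the reverse inclusion.

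The part that actually requires an argument, and the one I expect to be the main obstacle, is (ii), since the image of a level set is not automatically the expected level set. Write $H=\{x\in R^n:\langle x-c,b\rangle=0\text{ in }R\}$ and $H'=\{y\in R_\ell^n:\langle y-\pi_\ell(c),\pi_\ell(b)\rangle=0\text{ in }R_\ell\}$. Reducing the defining equation of $H$ mod $p^\ell$ gives $\pi_\ell(H)\subseteq H'$. For the reverse inclusion, take $y\in H'$ and any lift $\widetilde y\in R^n$ of $y$; then $\langle\widetilde y-c,b\rangle=p^\ell u$ for some $u\in R$. Choosing a coordinate $i$ with $b_i\in R^\times$ and setting $x:=\widetilde y-p^\ell u\,b_i^{-1}e_i$, we change $\widetilde y$ only by a multiple of $p^\ell$, so $\pi_\ell(x)=y$, while $\langle x-c,b\rangle=p^\ell u-p^\ell u=0$, so $x\in H$ and $y\in\pi_\ell(H)$. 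Finally $\pi_\ell(b)$ is a valid normal direction by the unit-coordinate observation. This lifting step is the analogue, modulo $p^\ell$, of the within-a-cube computation in the proof of Lemma~\ref{idQasZp}(ii).
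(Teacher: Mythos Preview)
Your proposal is correct and follows essentially the same route as the paper: parts (i), (iii), (iv) are handled by the same direct computations, and for (ii) both you and the paper prove the two inclusions separately, the nontrivial direction being surjectivity onto the candidate hyperplane in $R_\ell^n$. Your lifting argument for (ii)---pick any preimage $\widetilde y$, write $\langle\widetilde y-c,b\rangle=p^\ell u$, then correct a single unit coordinate by $p^\ell u\,b_i^{-1}$---is in fact a cleaner and more explicit version of the paper's step, which writes out the digit decomposition and asserts existence of a suitable $x''$ ``as $b$ is non-zero mod $p$'' without exhibiting it.
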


\begin{proof}

By linearity, if $L=\{a+tb:t\in R\}$ is a line, then
\[ \pi_\ell(L)=\{\pi_\ell(a)+t'\pi_\ell(b):t'\in R_\ell\}. \]
This proves (i). For (ii), suppose $H=\{x:\langle x-a,b\rangle=0\}$. We claim that 
\begin{equation}\label{rescaled-H}
\pi_\ell(H)=\{x'\in R_\ell:\langle x'-a',b'\rangle=0\}.
\end{equation}
Indeed, writing $x=x'+x''p^\ell$, and similarly for $a$ and $b$, we have
\[ \langle x-a,b\rangle = \langle x'-a',b'\rangle +p^\ell(\langle x'-a',b''\rangle\rangle+\langle x''-a'',b\rangle). \]
Applying $\pi_\ell$ to both sides of this equation, and noting that $\pi_\ell(0)=0$, we conclude that $\pi_\ell(H)\subset\{x'\in R_\ell:\langle x'-a',b'\rangle=0\}$. Conversely, suppose $x'\in[p^\ell]$ satisfies $\langle x'-a',b'\rangle=0$ mod $p^\ell$. Then for any $x''$ satisfying 
\[ \langle x'-a',b''\rangle+\langle x''-a'',b \rangle=0\bmod{p^{k-\ell}}, \]
we have $x=x'+x''p^\ell\in H$ (notice that such an $x''$ must exist as $b$ is non-zero mod $p$). This gives (\ref{rescaled-H}) The proof of (iii) is similar. Finally, (iv) follows directly from the definitions of the $p^{-j}$ neighbourhood of a set and the map $\pi_\ell$.
\end{proof}

\begin{lemma}
\label{gen-intersectinQ}
Let $L,L'\subset R^n$ be lines. Assume that $\angle (L, L')=1$, and that $L$ and $L'$ both intersect a cube $Q$ on scale $1$. Then $L\cap L'\subset Q$. 
\end{lemma}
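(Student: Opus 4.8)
The plan is to reduce the claim to an elementary statement about lines in the small ring $R_1 = \ZZ/p\ZZ$, where ``lines'' meet in at most one point unless they coincide. First I would write $L = \{a + tb : t\in R\}$ and $L' = \{a' + sb' : s\in R\}$ with $b,b'\in\mathcal{B}$, and let $z \in L\cap L'$ (if the intersection is empty there is nothing to prove). Replacing $a$ by $z$ and $a'$ by $z$, I may assume both lines pass through $z$, so $L = \{z + tb\}$ and $L' = \{z + sb'\}$. Since $L$ and $L'$ each intersect $Q$, a cube on scale $1$, I want to show that in fact $L\cap L' \subset Q$, i.e.\ every common point of $L$ and $L'$ lies in the same residue class mod $p$ as the points of $L$ (equivalently $L'$) that lie in $Q$.

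The key step is to apply the projection $\pi_1 : R^n \to R_1^n$ and use Lemma~\ref{lem-pimap}(i): $\pi_1(L)$ and $\pi_1(L')$ are lines in $R_1^n$ with $\angle(\pi_1(L),\pi_1(L')) = 1$, which by definition of the $p$-adic angle means their direction vectors $\pi_1(b), \pi_1(b')$ are \emph{not} equal in $R_1^n$. Two lines through a common point in $R_1^n = \FF_p^n$ with distinct directions meet only at that point; concretely, if $\pi_1(z) + t\pi_1(b) = \pi_1(z) + s\pi_1(b')$ then $t\pi_1(b) = s\pi_1(b')$, and since $b,b'\in\mathcal{B}$ have some coordinate equal to $1$ and the nondegeneracy forces the directions to be independent in $\FF_p^n$ (distinctness of $\pi_1(b),\pi_1(b')$ together with the structure of $\mathcal{B}$ — each is a standard basis vector in the leading slot scaled appropriately — rules out proportionality mod $p$), we get $t\equiv s\equiv 0 \bmod p$. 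Hence $\pi_1(L)\cap\pi_1(L') = \{\pi_1(z)\}$, a single point.

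Now I finish as follows. Let $w \in L\cap L'$ be any common point. Then $\pi_1(w) \in \pi_1(L)\cap\pi_1(L') = \{\pi_1(z)\}$, so $\pi_1(w) = \pi_1(z)$. On the other hand, the cube $Q$ on scale $1$ is precisely a fibre of $\pi_1$; since $L$ meets $Q$, there is a point $q \in L\cap Q$ with $\pi_1(q) = \pi_1(z)$ (because $q\in L$ means $q = z + tb$, and $\pi_1(q)\in\pi_1(L)$; but also $L$ through $z$ has $\pi_1(L)$ through $\pi_1(z)$, and the point of $Q$ it hits forces $\pi_1(q)=\pi_1(z)$ once we observe all of $\pi_1(L)\cap\pi_1(Q\text{'s residue})$ collapses — more simply, $Q = \pi_1^{-1}(\pi_1(q))$ and we need $\pi_1(q) = \pi_1(z)$). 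Thus $Q = \pi_1^{-1}(\pi_1(z))$, and $\pi_1(w) = \pi_1(z)$ gives $w\in Q$. As $w$ was arbitrary, $L\cap L'\subset Q$.

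\textbf{Main obstacle.} The delicate point is verifying that $\angle(L,L')=1$ genuinely forces $\pi_1(b)$ and $\pi_1(b')$ to be \emph{linearly independent} over $\FF_p$, not merely distinct: a priori distinct direction vectors in $\mathcal{B}$ could still be scalar multiples of one another mod $p$. Here I would lean on the normalization built into $\mathcal{B}$ — each $b\in\mathcal{B}_i$ has a distinguished coordinate equal to $1$ with all earlier coordinates divisible by $p$ — so that $\pi_1(b)$ for $b\in\mathcal{B}_i$ has its first nonzero coordinate (reading in the appropriate order) equal to $1$ in slot $i$. Two such reduced vectors that are proportional mod $p$ with proportionality constant $1$ must be equal; any other constant is incompatible with both having a $1$ in their distinguished slot unless the slots differ, in which case independence is immediate. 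Once this is pinned down the rest is the routine $\FF_p$-linear-algebra observation that two distinct lines through a point meet only there, combined with the identification of scale-$1$ cubes as $\pi_1$-fibres.
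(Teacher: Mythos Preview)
Your approach is the same as the paper's --- project via $\pi_1$ to $R_1^n$ and use that two lines in $\FF_p^n$ with non-proportional directions meet in at most one point --- but your execution has a gap in the final paragraph, and it is not where you think it is.

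The ``main obstacle'' you flag (that $\angle(L,L')=1$ forces $\pi_1(b),\pi_1(b')$ to be linearly independent over $\FF_p$) is handled correctly by your analysis of $\mathcal{B}$, and that part is fine. The actual gap is here: having established $\pi_1(L)\cap\pi_1(L')=\{\pi_1(z)\}$, you then try to show $Q=\pi_1^{-1}(\pi_1(z))$ using \emph{only} the hypothesis that $L$ meets $Q$. That is not enough. If $q\in L\cap Q$, then $\pi_1(q)$ and $\pi_1(z)$ are both points on the single line $\pi_1(L)$, which has $p$ points; nothing forces them to coincide. Your parenthetical justification is circular --- it ends by restating the claim ``we need $\pi_1(q)=\pi_1(z)$'' rather than proving it.

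The fix is to use the other half of the hypothesis: $L'$ \emph{also} meets $Q$. Since $Q$ is a $\pi_1$-fibre, $\pi_1(Q)$ is a single point; since both $L$ and $L'$ meet $Q$, this point lies on both $\pi_1(L)$ and $\pi_1(L')$, hence in their intersection $\{\pi_1(z)\}$. Therefore $\pi_1(Q)=\pi_1(z)$ and $Q=\pi_1^{-1}(\pi_1(z))$, and any $w\in L\cap L'$ satisfies $\pi_1(w)=\pi_1(z)$, so $w\in Q$. This is exactly how the paper argues: it lets $Q'$ be the scale-$1$ cube containing an intersection point, observes that $\pi_1(L)$ and $\pi_1(L')$ pass through both $\pi_1(Q)$ and $\pi_1(Q')$, and uses unique intersection to conclude $Q=Q'$.
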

\begin{proof}
Suppose $L$ and $L'$ intersect in some cube $Q'$ on scale $1$. Then the lines $\pi_1(L)$ and $\pi_1(L')$ in $R_1^n$ pass through both of the points $q'=\pi(Q')$ and $q=\pi(Q)$. But Lemma~\ref{lem-pimap} implies that $\pi_1(L)$ and $\pi_1(L')$ make angle $1$, hence intersect uniquely. Therefore $q=q'$, which means that $Q=Q'$, and indeed any intersection points of $L$ and $L'$ lie in $Q$.  
\end{proof}

\begin{lemma}
\label{LcapH}
Let $L\subset R^n$ be a line in direction $b$, and let $H\subset R^n$ be a hyperplane with normal direction $v$. Assume that they intersect, and that $\langle b,v\rangle=cp^j$ for some invertible $c\in R^\times$ and $j\geq 0$. If $j=0$, then the intersection point is unique. If $j>0$, then there is some cube $Q$ on scale $k-j$ so that $L\cap H\subset Q$, and $|L\cap H|=p^j$. 
\end{lemma}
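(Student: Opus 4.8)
The plan is to reduce the whole statement to a single linear congruence in one unknown. Write $L=\{a+tb:t\in R\}$ and $H=\{x\in R^n:\langle x-a_0,v\rangle=0\}$ for suitable $a,a_0\in R^n$. A point $a+tb$ of $L$ lies in $H$ exactly when $\langle a-a_0,v\rangle+t\langle b,v\rangle=0$ in $R$; abbreviating $d:=\langle a-a_0,v\rangle$ and using the hypothesis $\langle b,v\rangle=cp^j$ with $c\in R^\times$, this reads
\[ tcp^j\equiv -d\pmod{p^k}. \]
Thus $L\cap H$ is the image, under the map $t\mapsto a+tb$ (which is injective since $b$ is nondegenerate), of the solution set of this congruence.

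First I would use the assumption $L\cap H\neq\emptyset$: a solution $t_0$ exists, so $p^j\mid d$, and writing $d=p^jd'$ and cancelling $p^j$ we obtain $tc\equiv -d'\pmod{p^{k-j}}$, equivalently $t\equiv t_0\pmod{p^{k-j}}$ with $t_0:=-c^{-1}d'$. (We may assume $0\le j\le k$: if $\langle b,v\rangle=0$ take $j=k$, which is the edge case $L\subset H$; otherwise $j$ is the $p$-adic valuation of $\langle b,v\rangle$, so $j\le k-1$.) For $j=0$ the congruence determines $t$ uniquely in $R$, so $L\cap H$ is a single point. For $j\ge 1$ the solution set is the coset $t_0+p^{k-j}R$, which has exactly $p^j$ elements, so $|L\cap H|=p^j$.

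For the cube containment, I would observe that if $t\equiv t'\pmod{p^{k-j}}$ then $(a+tb)-(a+t'b)=(t-t')b$ is divisible by $p^{k-j}$ in every coordinate, hence $\pi_{k-j}(a+tb)=\pi_{k-j}(a+t'b)$; so all points of $L\cap H$ share one value of $\pi_{k-j}$, giving $L\cap H\subset Q_{k-j}(a+t_0b)$, a cube on scale $k-j$. There is no serious obstacle here; the only places calling for care are deducing $p^j\mid d$ from the nonemptiness of $L\cap H$ (rather than assuming it) and keeping track of the modulus $p^{k-j}$ after cancelling $p^j$, together with the degenerate cases $j=0$ and $j=k$.
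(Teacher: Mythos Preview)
Your argument is correct and follows essentially the same route as the paper: reduce to the single congruence $tcp^j\equiv -d\pmod{p^k}$, count its solutions, and observe that all solutions lie in a single residue class mod $p^{k-j}$, hence in one cube on scale $k-j$. The only cosmetic difference is that the paper picks the base point $a$ of $L$ to lie in $L\cap H$ from the outset, so that $d=0$ and the congruence is simply $tcp^j\equiv 0$, avoiding the separate step of deducing $p^j\mid d$.
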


\begin{proof}
Let $a\in L\cap H$, so that $L=\{a+tb:t\in R^n\}$ and $H=\{x:\langle x-a,v\rangle=0\}$. Then 
$L\cap H$ consists of points $x=a+tb$ with $t\in R$ such that 
\[ 0= \langle x-a,v\rangle= t\langle b,v\rangle= tcp^j  \bmod{p^k}. \]
If $j=0$, then we have a unique intersection point with $t=0$. If $j\geq 1$, the intersection points correspond to $t=0 \bmod{p^{k-j}}$, that is, $t=\ell p^{k-j}$ for $\ell \in[p^j]$. This yields $p^j$ intersection points, all in the same cube on scale $k-j$. 
\end{proof}

\subsection{A simplified geometric test}
\label{subsec-geo-test-2}
In this subsection we prove Theorem \ref{thm-geotest} in the simple case when $n=2$ and $\ell=0$; the general case is deferred until the next subsection.

Let $\mathcal{L}=\{L_0,L_1,\dots,L_p\}$ be a collection of $p+1$ lines in $R^2$. Assume that  
there is some square $Q$ on scale $1$ such that $Q\cap L_i\neq\emptyset$ for all $i$, and that 
$\angle(L_i,L_j)=1$ for any $i\neq j$. Notice that, if $B$ is the set of directions of the lines $L_i$, then
\begin{equation}
\label{normsmodp}
\{b \bmod{p}: b\in B\} = \{(0,1),(1,0),(1,1)\dots,(1,p-1)\}. 
\end{equation}
If $L$ is any line in $R^2$, then there is a unique line $L_i$ in $\mathcal{L}$ such that $\angle(L,L_i)<1$. 

\begin{lemma}
\label{LcapH2}
Let $L,L'$ be lines in $R^2$. Let $b$ be the direction of $L$, and $v$ the normal direction of $L'$. 

(i) If $\angle(L,L')=1$, then $\langle v,b\rangle\neq0\bmod{p}$. Consequently, by Lemma~\ref{LcapH}, $L$ and $L'$ have a unique intersection point.

(ii) If $\angle(L,L')<1$, then $\langle v,b\rangle=0\bmod{p}$. Consequently, by Lemma~\ref{LcapH}, for any square $Q$ on scale $1$ we have $|L\cap L'\cap Q|=0\bmod{p}$. 
\end{lemma}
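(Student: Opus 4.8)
The plan is to reduce everything to an angle computation at the level of directions modulo $p$, since the hypotheses and conclusions of both parts are invariant under rescaling $L'$ and depend only on the residues of the direction $b$ of $L$ and the normal direction $v$ of $L'$. First I would recall that for lines in $R^2$, a line with direction $b \in \mathcal{B}$ has a normal direction obtained by the usual $90^\circ$-rotation up to units: if $b \equiv (b_1, b_2) \bmod p$, then its normal is $\equiv (b_2, -b_1) \bmod p$ (or a unit multiple thereof). So write $b$ for the direction of $L$ and $b'$ for the direction of $L'$; the normal direction $v$ of $L'$ satisfies $\langle v, w \rangle \equiv 0 \bmod p$ iff $w$ is parallel to $b'$ modulo $p$. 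Hence $\langle v, b \rangle \equiv 0 \bmod p$ if and only if $b \equiv \lambda b' \bmod p$ for some $\lambda \in (\ZZ/p\ZZ)$, which is precisely the statement that $\angle(L, L') < 1$ (equivalently $p \mid (b - b')$ after normalizing representatives in $\mathcal{B}$, using that canonical directions with the same residue mod $p$ and the same pivot coordinate are congruent mod $p$).

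For part (i): if $\angle(L, L') = 1$, then by definition $p \nmid (b - b')$ in the canonical representative, so the residues $b \bmod p$ and $b' \bmod p$ are distinct points of $\PP(\ZZ/p\ZZ)^1$ (here I would invoke (\ref{normsmodp}): all the lines in such a configuration have directions with pairwise distinct, nonzero residues mod $p$). Two distinct directions in the projective line over $\ZZ/p\ZZ$ are not scalar multiples of each other mod $p$, so $b \not\equiv \lambda b' \bmod p$ for any $\lambda$, and therefore $\langle v, b \rangle \not\equiv 0 \bmod p$. Then Lemma~\ref{LcapH} with $j = 0$ gives the unique intersection point.

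For part (ii): if $\angle(L, L') < 1$, then by the normalization in $\mathcal{B}$ we have $b \equiv b' \bmod p$, so in particular $b \equiv 1 \cdot b' \bmod p$, whence $\langle v, b \rangle \equiv \langle v, b' \rangle \equiv 0 \bmod p$. Write $\langle v, b \rangle = c p^j$ with $c \in R^\times$ and $j \geq 1$. Then Lemma~\ref{LcapH} says $L \cap L'$ is either empty (if they do not intersect) or consists of exactly $p^j$ points all lying in a single cube on scale $k - j$; since $j \geq 1$, this cube is contained in a cube on scale $1$, so for any square $Q$ on scale $1$, $L \cap L' \cap Q$ is either empty, all of $L \cap L'$, or empty — in every case $|L \cap L' \cap Q| \in \{0, p^j\}$, hence $\equiv 0 \bmod p$.

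The only real subtlety — and the step I would be most careful about — is the bookkeeping between the canonical direction vectors in $\mathcal{B}$ and their residues mod $p$, specifically verifying that ``$\angle(L,L') < 1$'' (defined via $p^s \parallel (b - b')$ for the canonical representatives) is genuinely equivalent to ``$b \bmod p$ and $b' \bmod p$ are proportional in $\PP(\ZZ/p\ZZ)^1$,'' which requires knowing that two canonical directions sharing a residue mod $p$ must share the same pivot index and hence be congruent mod $p$. This is immediate from the definition of $\mathcal{B} = \mathcal{B}_1 \cup \mathcal{B}_2$ once one notes that a vector with first coordinate a unit cannot be congruent mod $p$ to one with first coordinate divisible by $p$, so I would just spell that out in one line and then the rest is the routine normal-direction computation above.
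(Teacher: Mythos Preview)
Your proof is correct and takes essentially the same approach as the paper: reduce modulo $p$, where both the direction $b$ and the normal $v$ (equivalently, $b$ and the direction $b'$ of $L'$) have residues in the set on the right side of~(\ref{normsmodp}), and verify that $\langle v,b\rangle\equiv 0\bmod p$ if and only if $b\equiv b'\bmod p$ for canonical representatives. The paper compresses this into ``verify directly,'' while you spell out the conceptual reason via the $90^\circ$-rotation and the pivot-index observation for $\mathcal{B}_1\cup\mathcal{B}_2$; the one edge case you do not explicitly cover is $\langle v,b\rangle=0$ in $R$ (i.e.\ $b=b'$), but there $L$ and $L'$ are parallel and the conclusion of (ii) is immediate.
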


\begin{proof}
For any $b,v\in\mathcal{B}$, the directions $b$ and $v$ mod $p$ must belong to the set on the right side of ~(\ref{normsmodp}). The lemma is now easy to verify directly.
\end{proof}

\begin{proposition}
\label{HPtest2}
Let $\mathcal{L}$ and $Q$ be as described above. Let 
\[ X=\bigcup_{i=0}^pL_i\setminus Q. \]
Then for any line $L$, we have $|L\cap X|=0\bmod{p}$.
\end{proposition}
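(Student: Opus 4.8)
The plan is to split on the $p$-adic angle between the test line $L$ and the lines in $\mathcal{L}$, using Lemma~\ref{LcapH2}. First I would dispose of the degenerate case where $L$ itself belongs to $\mathcal{L}$, say $L=L_{i_0}$: then $|L\cap X| = |L\setminus Q| = |L| - |L\cap Q| = p^k - p^{k-1} \equiv 0 \bmod p$, since $L$ meets the scale-$1$ square $Q$ in exactly a scale-$1$ sub-line of $p^{k-1}$ points. So assume $L\notin\mathcal{L}$.

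Next, recall from the discussion before Lemma~\ref{LcapH2} that there is a \emph{unique} index $i_0$ with $\angle(L,L_{i_0})<1$, and $\angle(L,L_i)=1$ for all $i\neq i_0$. Write
\[
|L\cap X| = \sum_{i=0}^p |L\cap L_i| - \Big|L\cap \bigcup_{i=0}^p (L_i\cap Q)\Big|,
\]
being slightly careful: strictly, $X=\bigcup_i L_i\setminus Q$, so $L\cap X = \bigcup_i (L\cap L_i)\setminus Q$, and I should instead compute $|L\cap X| = \sum_i |(L\cap L_i)\setminus Q| - (\text{overcounting of points lying in two of the }L_i)$. The cleaner route is: the lines $L_i$ pairwise intersect inside $Q$ by Lemma~\ref{gen-intersectinQ} (they make pairwise angle $1$ and all meet $Q$), so outside $Q$ the sets $L_i\setminus Q$ are pairwise disjoint. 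Hence
\[
|L\cap X| = \sum_{i=0}^p |(L\cap L_i)\setminus Q| = \sum_{i=0}^p\big(|L\cap L_i| - |L\cap L_i\cap Q|\big).
\]
For $i\neq i_0$: by Lemma~\ref{LcapH2}(i), $|L\cap L_i|=1$, and that single point may or may not lie in $Q$. For $i=i_0$: by Lemma~\ref{LcapH2}(ii), $|L\cap L_{i_0}\cap Q|\equiv 0\bmod p$; and $|L\cap L_{i_0}|$ is either $0$ or a power $p^j$ with $j\geq 1$ (again Lemma~\ref{LcapH} with $\langle v,b\rangle = cp^j$, $j\geq 1$), hence $\equiv 0\bmod p$ in either case. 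So the $i_0$ term contributes $0\bmod p$, and
\[
|L\cap X| \equiv \sum_{i\neq i_0}\big(1 - |L\cap L_i\cap Q|\big)\bmod p.
\]

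Now the key point: I claim $\sum_{i\neq i_0} |L\cap L_i\cap Q| \equiv 1\bmod p$, which then gives $|L\cap X|\equiv p - 1 \equiv 0\bmod p$ as desired (there are $p$ indices $i\neq i_0$). To see the claim, project to $R_1^2$ via $\pi_1$. Under $\pi_1$, the images $\pi_1(L_i)$ all pass through the single point $q=\pi_1(Q)$ and have the $p+1$ distinct directions $(0,1),(1,0),(1,1),\dots,(1,p-1)$ mod $p$ (by \eqref{normsmodp} and Lemma~\ref{lem-pimap}(i)); likewise $\pi_1(L)$ passes through some point and has a direction equal mod $p$ to exactly one of these, namely that of $L_{i_0}$. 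For $i\neq i_0$, the point $L\cap L_i$ (which exists and is unique) lies in $Q$ iff its $\pi_1$-image is $q$, i.e. iff $\pi_1(L)$ passes through $q$. If $\pi_1(L)$ passes through $q$, then $\pi_1(L)\cap\pi_1(L_i)=\{q\}$ for every $i\neq i_0$, so all $p$ of these intersection points lie in $Q$ — but wait, that would give the sum $\equiv p\equiv 0$, not $1$. So I need to recount more carefully: if $\pi_1(L)$ does \emph{not} pass through $q$, then $\pi_1(L)$ meets each of the $p+1$ lines $\pi_1(L_i)$ in a distinct point away from $q$ — except it is parallel mod $p$ to $\pi_1(L_{i_0})$, so it meets exactly $p$ of them, the $i\neq i_0$ ones, each in a unique point $\neq q$; hence none of the points $L\cap L_i$ ($i\neq i_0$) lies in $Q$, giving sum $=0$ and $|L\cap X|\equiv p\equiv 0$. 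If $\pi_1(L)$ \emph{does} pass through $q$, a dimension/counting argument on the original lines is needed: the points $L\cap L_i$ for $i\neq i_0$ all lie in the scale-$1$ cube $Q$, and within $Q\cong R_{k-1}^2$ (via $\iota_Q$, Lemma~\ref{idQasZp}) the $p$ lines $\iota_Q(L_i\cap Q)$ still make pairwise angle $1$ while $\iota_Q(L\cap Q)$ makes angle $<1$ with exactly $\iota_Q(L_{i_0}\cap Q)$...

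\textbf{I expect this last case — $\pi_1(L)$ through $q$ — to be the main obstacle.} The honest resolution is that one should not expect $\sum_{i\neq i_0}|L\cap L_i\cap Q|$ to be congruent to a fixed value; rather, one induct on $k$. When $L\cap X$ has all its mass in $Q$ (the case $\pi_1(L)\ni q$, combined with $L$ not meeting any $L_i$ outside $Q$), one applies $\iota_Q$ to reduce to the same statement in $R_{k-1}^2$ with the rescaled configuration $\{\iota_Q(L_i\cap Q)\}$, which is again a valid tuple of $p+1$ pairwise-angle-$1$ lines through the scale-$1$ square $\iota_Q(Q\cap Q')$ (for $Q'$ the scale-$1$ square containing... ) — except the fan definition then changes, so really the clean statement to induct on is Theorem~\ref{thm-geotest}'s special case phrased for all scales. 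In the write-up I would therefore: (1) handle $L\in\mathcal{L}$ directly; (2) for $L\notin\mathcal{L}$, use disjointness of $L_i\setminus Q$ and Lemma~\ref{LcapH2} to reduce mod $p$ to $\sum_{i\neq i_0}(1-|L\cap L_i\cap Q|)$; (3) observe that either $L$ meets no $L_i$ inside $Q$ — then each term is $1$, sum $\equiv p\equiv 0$ — or $L$ meets them all inside $Q$ (this dichotomy is exactly Lemma~\ref{gen-intersectinQ} applied to $L$ and any $L_i$, $i\neq i_0$: since $\angle(L,L_i)=1$ and both meet $Q$... no, that gives $L\cap L_i\subset Q$ always!).

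So in fact Lemma~\ref{gen-intersectinQ} forces, for every $i\neq i_0$, the unique point $L\cap L_i$ to lie in $Q$, hence $|L\cap L_i\cap Q|=1$, and step (2) collapses immediately to $|L\cap X|\equiv\sum_{i\neq i_0}(1-1)=0\bmod p$. That is the actual proof, and the only real content is: (a) the lines $L_i\setminus Q$ are pairwise disjoint (Lemma~\ref{gen-intersectinQ}); (b) for $i\neq i_0$, $L$ and $L_i$ meet in a single point, which lies in $Q$ (Lemma~\ref{LcapH2}(i) plus Lemma~\ref{gen-intersectinQ}); (c) $|L\cap L_{i_0}\setminus Q|\equiv 0\bmod p$ (Lemma~\ref{LcapH2}(ii) and the $p^j$-count in Lemma~\ref{LcapH}); and (d) the separate count when $L\in\mathcal{L}$. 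The main thing to be careful about is part (c) and making sure the disjointness in (a) is used correctly so that $|L\cap X|=\sum_i|(L\cap L_i)\setminus Q|$ with no cross terms.
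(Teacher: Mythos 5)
Your overall approach matches the paper's: reduce to $|L\cap X|=\sum_{i}|(L\cap L_i)\setminus Q|$ using the disjointness of the $L_i\setminus Q$ from Lemma~\ref{gen-intersectinQ}, then handle the distinguished index $i_0$ with $\angle(L,L_{i_0})<1$ via Lemma~\ref{LcapH2}(ii), and the remaining $p$ indices via the single-intersection count. But your final ``cleaned up'' version (and point~(b) of your summary) contains a genuine misapplication of Lemma~\ref{gen-intersectinQ}. That lemma requires that \emph{both} lines intersect $Q$; you apply it to $L$ and $L_i$, but nothing guarantees that $L$ meets $Q$. If $L\cap Q=\emptyset$, the conclusion $L\cap L_i\subset Q$ fails outright — in fact, in that case the unique intersection point $L\cap L_i$ necessarily lies \emph{outside} $Q$, since it lies on $L$. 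Consequently the identity you assert, $|L\cap X|\equiv\sum_{i\neq i_0}(1-1)=0$, is not correct in that case; what actually happens is $|L\cap X|\equiv\sum_{i\neq i_0}(1-0)=p\equiv 0$.

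The paper fixes this by an explicit case split on whether $L\cap Q=\emptyset$: if so, each of the $p$ intersection points $L\cap L_i$ ($i\neq i_0$) lies outside $Q$ and contributes $1$, and the $i_0$ term contributes $0\bmod p$; if not, then (and only then) Lemma~\ref{gen-intersectinQ} applies and puts every $L\cap L_i$ inside $Q$, so those $p$ terms contribute $0$, and again the $i_0$ term is $0\bmod p$. You in fact worked both of these cases out correctly in the exploratory middle portion of your proposal (the discussion of whether $\pi_1(L)$ passes through $q$), so the ingredients are all there; the error is only that you discarded the case $L\cap Q=\emptyset$ when you ``collapsed'' the argument. Restoring the case split makes your write-up correct and essentially identical to the paper's proof. (One can also note that your opening count $|L\setminus Q|=p^k-p^{k-1}$ for the $L\in\mathcal{L}$ case silently requires $k\geq 2$; the general $i_0$ treatment via Lemma~\ref{LcapH2}(ii) already covers this, so the separate case $L\in\mathcal{L}$ is unnecessary.)
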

\begin{proof}
Let $L$ be a line. By the observation in~(\ref{normsmodp}), there is a unique line $L'$ in $\mathcal{L}$ such that $\angle(L,L')<1$.  Without loss of generality, assume $L'=L_0$. 

Notice that for $i\neq j$, the intersection of $L_i$ and $L_j$ is contained in $Q$, by Lemma~\ref{gen-intersectinQ}. Therefore no two distinct lines in $\mathcal{L}$ may intersect in $X$, so that 
\begin{equation} 
\label{splitsum}
|L\cap X|= \sum_{i=0}^p|L\cap L_i\cap X|.
\end{equation}
First suppose that $L\cap Q=\emptyset$. Then by Lemma~\ref{LcapH2}, for each $i\in\{1,\dots,p\}$, $L$ intersects $L_i$ at a unique point $p_i\not\in Q$, so $|L\cap L_i\cap X|=1$ for $i=1,\dots,p$. 
Next we count the size of $L\cap L_0\cap X$. By Lemma~\ref{LcapH2}, the size of $L\cap L_0$ in any square of scale $1$ is $0$ modulo $p$, so $|L\cap L_0\cap X|=0\bmod{p}$.  Combining this all with~(\ref{splitsum}), we obtain $|L\cap X|=0\bmod{p}$, as desired. 

Now suppose $L\cap Q\neq\emptyset$. Then by Lemma~\ref{gen-intersectinQ}, for $i=1,\dots,p$, we have that $L\cap L_i\subset Q$, and so $L\cap L_i\cap X=\emptyset$. Therefore $X\cap L = X\cap L\cap L_0$, and by the same argument as in the previous case, the size of this set is 0 modulo $p$. 
\end{proof}

\noindent\emph{Example.}  We can use the previous proposition to show that in $(\ZZ/4\ZZ)^2$, the phi function $\phi_{21}$ does not lie in the span of hyperplane functions. 
We record the values of $\phi_{21}(x,y)$ in the following table, with rows indexed by $y\in\ZZ/4\ZZ$ and columns by $x\in \ZZ/4\ZZ$:
\begin{displaymath}
\begin{tabular}{c|cc:cc}
\text{} & 0 & 2 & 1 & 3 \\
\hline 
0 & 0 & 0 & 0 & 0 \\
2 & 0 & 0 & 0 & 0 \\
\hdashline
1 & 0 & 1 & 0 & 1 \\
3 & 0 & 1 & 0 & 1 \\
\end{tabular}
\end{displaymath}
We used dashed lines in the table to partition $(\ZZ/4\ZZ)^2$ according to its four squares on scale $1$. Let $Y=\hbox{supp\,}\phi_{21}$. Observe that $\pi_1(Y)=\{(0,1),(1,1)\}\subset(\ZZ/2\ZZ)^2$ is a line in the direction $(1,0)$, whereas for each square $Q$ on scale 1, the set $\iota_Q(Y\cap Q)$ is either empty or else a line in the direction $(0,1)$. In this sense, $Y$ is a line both globally on the rough scale and locally on each square on scale 1, but the directions on the two scales are inconsistent with each other. 

One could ask if there might be a way to represent $\phi_{21}$ as a linear combination of several hyperplane functions. Our geometric test shows that this is in fact impossible.
Take $Q$ to be the square containing the point $(0,1)$. Let $L_0$, $L_1$, $L_2$ be lines in directions $(1,0)$, $(1,1)$, and $(0,1)$, respectively, all passing through the point $(0,1)$.
Let $X=(L_0\cup L_1 \cup L_2) \setminus Q$. Then $X\cap Y=\{(3,1)\}$, and so 
\[ \sum_{(x,y)\in X}\phi_{21}(x,y)=1\neq0\bmod{2}.\]


\subsection{Generalizing the geometric test} 




Let $n>2$ and let $\Pi\subset R^n$ be a $p^{-1}$-neighbourhood of a 2-plane in $R^n$.
For our hyperplane test in $R^n$, we will consider the intersection of a hyperplane $H$ with $\Pi$, and then apply an adapted form of the $2$-dimensional hyperplane test in $\Pi$. The details in the case $\ell=0$ are given in the following proposition. 

\begin{proposition}
\label{geotest-scale1-n}
Let $\Pi\subset R^n$ be as defined above, and let $Q$ be a cube on scale $1$ in $\Pi$.  Let $L_0,L_1,\dots,L_p\subset \Pi$ be lines in $R^n$ all passing through $Q$ and satisfying $\angle(L_i,L_j)=1$ for all $i\neq j$. Let $X=(\bigcup_{j=0}^p L_j)\setminus Q$.  If $H\subset R^n$ is a hyperplane, then $|X\cap H|=0\bmod{p}$.
\end{proposition}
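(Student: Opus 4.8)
The plan is to reduce the statement in $R^n$ to the already-established two-dimensional case, Proposition~\ref{HPtest2}, by slicing $R^n$ along the $2$-plane neighbourhood $\Pi$ and using the $\pi_\ell$ and $\iota_Q$ machinery of Lemmas~\ref{lem-pimap}, \ref{idQasZp}, \ref{gen-intersectinQ} and \ref{LcapH}. First I would observe that since all of $L_0,\dots,L_p$ and the cube $Q$ lie inside $\Pi$, we have $X\subset \Pi$, so $|X\cap H|=|X\cap(H\cap\Pi)|$ and only the intersection $H\cap\Pi$ matters. The key dichotomy is whether $H$ "contains the direction $p\Pi$" or not: since $\Pi=\mathcal{N}_1(\Pi_0)$ for a $2$-plane $\Pi_0$ spanned by $u,v\in\mathcal{B}$ with $\angle(u,v)=1$, and $H$ has some normal direction $w$, the behaviour of $H\cap\Pi$ is governed by $\langle u,w\rangle$ and $\langle v,w\rangle$ modulo $p$. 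I would split into two cases accordingly.

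Case 1: $w$ is orthogonal to the $2$-plane $\Pi_0$ modulo $p$, i.e. $\langle u,w\rangle\equiv\langle v,w\rangle\equiv 0\bmod p$. Then, by an argument like the one in Lemma~\ref{LcapH}, for each line $L_j$ (whose direction $b_j$ reduces mod $p$ to a vector in $\Span\{u,v\}$ by Lemma~\ref{lem-pimap}(iii) applied to $\Pi$), we have $\langle b_j,w\rangle\equiv 0\bmod p$, so $L_j\cap H$ is either empty or a union of $p^{s_j}$ points ($s_j\geq 1$) contained in a single cube on scale $k-s_j\leq k-1$; in particular $|L_j\cap H|\equiv 0\bmod p$ for each $j$. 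Since the lines $L_i,L_j$ with $i\neq j$ intersect only inside $Q$ (Lemma~\ref{gen-intersectinQ}), removing $Q$ does not create or destroy overlaps among the $L_j$ in $X$, so $|X\cap H|=\sum_{j=0}^p|L_j\cap H\cap X|$. Each term is $|L_j\cap H|$ minus a correction $|L_j\cap H\cap Q|$; I need both of these to be $\equiv 0\bmod p$, which follows because $L_j\cap H$, when nonempty, is a coset of $p^{k-s_j}b_j$ inside a cube on scale $k-s_j$, and $|L_j\cap H\cap Q|$ is either $0$ or the full $p^{s_j}$ points (the cube $Q$ on scale $1$ either contains that scale-$(k-s_j)$ cube or is disjoint from it, since $s_j\geq 1$). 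Hence $|X\cap H|\equiv 0\bmod p$.

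Case 2: $w$ is not orthogonal to $\Pi_0$ mod $p$, i.e. $(\langle u,w\rangle,\langle v,w\rangle)\not\equiv(0,0)\bmod p$. Then $H\cap\Pi_0$, after reduction mod $p$, is a genuine affine line in the $2$-plane $\pi_1(\Pi_0)\cong R_1^2$, and I would like to say $H\cap\Pi$ is (the $\Pi$-preimage of) a line. The cleanest way is to use $\iota_Q$ or $\pi_1$: applying $\pi_1$, the configuration $L_0,\dots,L_p$ projects to $p+1$ lines $\pi_1(L_j)$ in the $2$-plane $\pi_1(\Pi_0)$ all passing through the single point $\pi_1(Q)$ and making pairwise angle $1$ (Lemma~\ref{lem-pimap}(i),(iii)); this is exactly the scale-$1$ picture, but I actually want a cleaner reduction. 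Better: identify $\Pi$ with a copy of $R^2$ (or work scale by scale via $\iota_{Q'}$ for cubes $Q'$ on scale $0$, which is all of $R^n$) so that the $L_j$ become the $p+1$ lines of Proposition~\ref{HPtest2} and $H\cap\Pi$ becomes a line $L$ in that $R^2$; then Proposition~\ref{HPtest2} gives directly $|L\cap X|\equiv 0\bmod p$. The point to check is that $H\cap\Pi$, viewed inside $\Pi\cong R^2$, really is a single line: because $w$ pairs nontrivially mod $p$ with $\Pi_0$, for each fixed transverse position the constraint $\langle x,w\rangle=0$ has the right number of solutions, and $H\cap\Pi$ is a coset of a rank-one $R$-submodule of $\Pi$ — i.e. a line in $\Pi$ in the sense needed. (Here I am using that $\Pi$ is a $p^{-1}$-neighbourhood, so that $\Pi\cong R\times R\times (\text{something that does not interact with }H$ in a way affecting the count$)$, and $|H\cap\Pi|$ distributes as $|L\cap R^2|$ times a fixed factor; working mod $p$ this factor is irrelevant, or one arranges it to be $1$ by choosing representatives.)

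The main obstacle I anticipate is the bookkeeping in Case 2: making precise the identification of $\Pi=Q'\cap\mathcal{N}_1(\Pi_0)$ with a copy of $R^2$ on which Proposition~\ref{HPtest2} literally applies, and verifying that $H\cap\Pi$ corresponds to a line in that model with the counting done correctly modulo $p$ (the neighbourhood $\Pi$ is not literally a $2$-plane, so $H\cap\Pi$ may be a union of parallel lines or an affine subspace of the "thickened" plane, and one must confirm the total count is still $\equiv |L\cap X|\bmod p$). Once that identification is set up cleanly — most naturally by choosing coordinates adapted to $u,v$ and the fact that $\Pi$ is a scale-$1$ neighbourhood — the two cases combine to give $|X\cap H|\equiv 0\bmod p$, which is the desired conclusion; and then Theorem~\ref{thm-geotest} follows (for $\ell=0$) by linearity as already noted after its statement.
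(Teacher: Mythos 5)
Your case split — $\pi_1(H)$ either contains $\pi_1(\Pi_0)$ or cuts it in a line — is exactly the dichotomy the paper uses, and your Case~1 (the ``containment'' case, where every $\langle b^{(j)},b\rangle\equiv 0\bmod p$ forces each $|L_j\cap H\cap Q'|$ to vanish mod $p$ for every scale-$1$ cube $Q'$) is correct and essentially matches the paper.

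Case~2 is where you have a genuine gap, and it is the one you yourself flag without resolving. You propose to ``identify $\Pi$ with a copy of $R^2$'' and then invoke Proposition~\ref{HPtest2}. But $\Pi=\mathcal{N}_1(\Pi_0)$ is a thickened slab, strictly larger than any $2$-plane: writing $R^n=\Span\{u,v\}\oplus W$, one has $\Pi=a+\Span\{u,v\}+pW$, of size $p^{2k}\cdot p^{(n-2)(k-1)}$. More to the point, the hypothesis only requires $L_j\subset\Pi$, not $L_j\subset\Pi_0$ or any fixed translate of $\Pi_0$; a line such as $\{(t,0,pt):t\in R\}\subset\mathcal{N}_1(\{(x,y,0)\})$ in $R^3$ drifts transversally through the slab, so the $L_j$ need not lie in any common $2$-dimensional affine subspace. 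There is therefore no copy of $R^2$ inside $\Pi$ on which Proposition~\ref{HPtest2} ``literally applies,'' and the heuristic ``$|H\cap\Pi|$ is $|L\cap R^2|$ times a fixed factor, irrelevant mod~$p$'' does not address what you need, which is a count of $|X\cap H|$ with $X$ a union of lines, not of $|H\cap\Pi|$.

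The paper avoids this by not attempting a reduction to the $n=2$ proposition at all. Instead it works intrinsically in $R^n$: project by $\pi_1$ to see that $\overline{L}=\pi_1(H)\cap\pi_1(\Pi)$ is a line in the $2$-plane $\pi_1(\Pi_0)\subset R_1^n$, identify the unique $\overline{L}_i$ parallel to $\overline{L}$ (whence $L_0\cap H=\emptyset$), locate for each remaining $i$ the scale-$1$ cube $Q_i=\pi_1^{-1}(\overline{L}\cap\overline{L}_i)$ containing $X\cap L_i\cap H$, and finally use $\iota_{Q_i}$ together with Lemma~\ref{LcapH} to show that each such intersection has size exactly $1$, giving a total of $p\equiv 0\bmod p$. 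If you want to complete your proposal, replace the attempted $\Pi\cong R^2$ identification in Case~2 with this $\pi_1$/$\iota_{Q_i}$ analysis; the $2$-plane structure enters only through $\pi_1(\Pi_0)$, which \emph{is} a genuine $2$-plane in $R_1^n$, and that is the only place the two-dimensional picture is needed.
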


\begin{proof}
Observe that by Lemma~\ref{gen-intersectinQ}, the lines $L_j$ may only intersect in $Q\subset X^c$, and so 
\begin{equation} 
\label{splitX}
|X\cap H|=\sum_{j=0}^p|X\cap L_j\cap H|.
\end{equation}

Let $b$ be the normal direction of $H$ and $b^{(j)}$ the direction of $L_j$. By Lemma \ref{lem-pimap}, $\pi_1(\Pi)$ is a $2$-plane in $R_1^n$ and $\pi_1(H)$ is a hyperplane in $R_1^n$ with normal direction $\pi_1(b)$. Then either $\pi_1(\Pi)\subset\pi_1(H)$, or else $\pi_1(\Pi)\cap\pi(H)$ is a line. In the first case, $\pi_1(L_j)\subset\pi_1(H)$, so that $\langle b^{(j)},b\rangle=0\bmod{p}$ for all $j$. By Lemma~\ref{LcapH}, for each $j$ and in each cube $Q'$ on scale 1, we have $|H\cap L_j\cap Q'|=0\bmod{p}$, which together with~(\ref{splitX}) gives the desired result. 

Thus for the remainder of the proof we assume that $\overline{L}:=\pi_1(H)\cap\pi_1(\Pi)$ is a line, and also that
there is some $j$ so that $|X\cap L_j\cap H|\neq0\bmod{p}$ (as otherwise,~(\ref{splitX}) gives the desired result), in which case Lemma~\ref{LcapH} implies that the size of the intersection is $1$.  For $i\in[p+1]$, let $\overline{L}_i=\pi_1(L_i)$. By Lemma~\ref{lem-pimap}, $\overline{L}_0,\overline{L}_1,\dots,\overline{L}_p$ are lines so that any pair makes angle $1$, and all pass through the point $q=\pi_1(Q)$.
Moreover, each is contained in the $2$-plane $\pi_1(\Pi)$, and so they inherit properties of lines in $R_1^2$.

Thus $\overline{L}$ has the same direction as exactly one of the $\overline{L}_i$, and intersects the other $p$ lines uniquely. Without loss of generality, assume $\overline{L}$ is parallel to $\overline{L}_0$. Since $H$ intersects $L_j$ outside of $Q$, the unique intersection of $\overline{L}_j$ and $\overline{L}$ is not equal to the point $q$, and in particular, $q\not\in\overline{L}$. Thus $\overline{L}_0\cap\overline{L}$ is empty, and so $L_0\cap H$ is empty as well. Moreover, 
\begin{equation}\label{overline-L-intersection}
 |\overline{L}_i\cap\overline{L}|=|(\overline{L}_i\cap\overline{L})\setminus\{q\}| =1 \quad \text{for each $i=1,\dots,p$}, 
 \end{equation}
since $\overline{L}_i$ and $\overline{L}$ intersect uniquely, and the latter line does not intersect $q$.
Also for such $i$, let 
\[ Q_i = \pi_1^{-1}(\overline{L}\cap\overline{L}_i). \]
Then $Q_i$ is a cube on scale $1$ in $\Pi$ that contains $X\cap L_i \cap H=L_i \cap H$. Combining this with~(\ref{splitX}), we have
\[ |X\cap H|=\sum_{i=1}^p|L_i\cap H|=\sum_{i=1}^p|(L_i \cap Q_i) \cap (H\cap Q_i)|.  \]
We will show that $|(L_i \cap Q_i) \cap (H\cap Q_i)|=1$ for $i=1,\dots,p$, which will complete the proof. To this end, choose $i\in\{1,\dots,p\}$, and identify $Q_i$ with $R_{k-1}^n$ via the map $\iota_{Q_i}$. Since this map is a bijection, we prove $|\iota_{Q_i}(L_i)\cap\iota_{Q_i}(H)|=1$. 

By Lemmas~\ref{idQasZp} and \ref{lem-pimap}, if $v$ is the direction of $\iota_{Q_i}(L_i)$, then $\pi_1(v)$ is the direction of $\overline{L}_i$, and if $b$ is the normal direction of $\iota_{Q_i}(H)$, then $\pi_1(b)$ is the normal direction of $\pi_1(H)$. Since $\overline{L}_i$ is contained in $\pi_1(\Pi)$, we have
$$
\overline{L}_i\cap \pi_1(H)= \overline{L}_i\cap \pi_1(\Pi)\cap \pi_1(H)= \overline{L}_i\cap \overline{L}.
$$
By (\ref{overline-L-intersection}), the last intersection is a single point in $R_1^n$. 
It follows by Lemma~\ref{LcapH} that $\langle\pi_1(b),\pi_1(v)\rangle\neq0\bmod{p}$. But then $\langle b,v\rangle\neq0\bmod{p}$, and so the same lemma gives that $\iota_{Q_i}(L_i)$ and $\iota_{Q_i}(H)$ intersect uniquely as well.
\end{proof}

Now we generalize this argument, and the argument for $n=2$, to cubes on other scales. 

\begin{proposition}
\label{geo-test-for-H}
Let $n\geq2$. For any hyperplane $H\subset R^n$ and any fan $X\subset R^n$, we have $|H\cap X|=0\bmod{p}$. 
\end{proposition}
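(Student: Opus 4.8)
The plan is to reduce the general statement to the two special cases already established, namely Proposition~\ref{HPtest2} (the case $n=2$, $\ell=0$) and Proposition~\ref{geotest-scale1-n} (the case $n>2$, $\ell=0$), by means of the rescaling maps $\iota_{Q'}$ and $\pi_{k-\ell}$ and their compatibility properties recorded in Lemmas~\ref{idQasZp} and~\ref{lem-pimap}.

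First I would set up the notation: let $X=\bigcup_{i=0}^p(L_i\cap Q')\setminus Q$ be a fan on scale $\ell$ in $R^n$, where $Q'$ is a cube on scale $\ell$, $Q=Q_{\ell+1}(a)\subset Q'$ is a cube on scale $\ell+1$, and (when $n>2$) the lines $L_j$ satisfy $L_j\cap Q'\subset\Pi=Q'\cap\mathcal{N}_{\ell+1}(\Pi_0)$ for a $2$-plane $\Pi_0$ through $a$. Fix a hyperplane $H\subset R^n$. If $H\cap Q'=\emptyset$, then $H\cap X=\emptyset$ since $X\subset Q'$, and there is nothing to prove; so assume $H\cap Q'\neq\emptyset$. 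Since $X\subset Q'$, we have $H\cap X=(H\cap Q')\cap X$, and I would apply the bijection $\iota_{Q'}:Q'\to R_{k-\ell}^n$. By Lemma~\ref{idQasZp}(ii), $\iota_{Q'}(H\cap Q')$ is a hyperplane in $R_{k-\ell}^n$ (with normal direction $\pi_{k-\ell}(b)$ where $b$ is the normal of $H$); by Lemma~\ref{idQasZp}(i), each $\iota_{Q'}(L_i\cap Q')$ is a line in $R_{k-\ell}^n$; and since all $L_i$ pass through $Q$, which is a cube on scale $\ell+1$ inside $Q'$, the images $\iota_{Q'}(Q)$ form a single cube on scale $1$ in $R_{k-\ell}^n$. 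The pairwise-angle-$1$ condition is preserved because $\angle(L_i,L_j)=1$ means $p\nmid(b^{(i)}-b^{(j)})$, which survives projection mod $p^{k-\ell}$ as long as $k-\ell\geq 1$ (and $\ell\leq p-2<k$ when $k\geq 2$; the case $k=1$ forces $\ell=0$ and $Q'=R^n$). Thus $\iota_{Q'}(X)$ is exactly a fan on scale $0$ in $R_{k-\ell}^n$.

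For $n=2$ this puts us in the situation of Proposition~\ref{HPtest2}: we have $p+1$ lines through a common square on scale $1$, pairwise angle $1$, and $\iota_{Q'}(X)$ is their union minus that square, so $|H\cap X|=|\iota_{Q'}(H\cap Q')\cap\iota_{Q'}(X)|=0\bmod p$. For $n>2$ I would additionally track the $2$-plane neighbourhood: by Lemma~\ref{idQasZp}(iii) and (iv), $\iota_{Q'}(\Pi)=\iota_{Q'}(Q'\cap\mathcal{N}_{\ell+1}(\Pi_0))=\mathcal{N}_1(\iota_{Q'}(\Pi_0))$ is a $p^{-1}$-neighbourhood of a $2$-plane in $R_{k-\ell}^n$, the images of the $L_j$ are lines contained in it passing through the cube $\iota_{Q'}(Q)$ on scale $1$, and the pairwise-angle-$1$ condition is preserved. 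Hence the hypotheses of Proposition~\ref{geotest-scale1-n} are met with ground ring $R_{k-\ell}$, and that proposition gives $|\iota_{Q'}(H\cap Q')\cap\iota_{Q'}(X)|=0\bmod p$, i.e. $|H\cap X|=0\bmod p$.

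The main obstacle I anticipate is purely bookkeeping rather than conceptual: making sure the rescaling $\iota_{Q'}$ genuinely carries a scale-$\ell$ fan to a scale-$0$ fan, which requires checking that the cube $Q$ (scale $\ell+1$) maps to a scale-$1$ cube, that the $2$-plane neighbourhood condition $L_j\cap Q'\subset\Pi$ transports correctly, and that the bound $0\leq\ell\leq p-2$ ensures $p+1$ lines with pairwise angle $1$ still exist after projection (equivalently, that $p+1\leq$ the number of available residual directions, which is what $\ell\leq p-2$, or rather the construction, guarantees). One should also note explicitly that $\iota_{Q'}$ is a bijection $Q'\to R_{k-\ell}^n$, so it preserves cardinalities of intersections, and that it is affine-linear enough for Lemma~\ref{idQasZp} to apply to the relevant lines, hyperplanes, and $2$-planes. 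Once these compatibilities are in hand, the proof is just ``rescale and cite the base case.''
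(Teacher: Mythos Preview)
Your proposal is correct and follows essentially the same approach as the paper: apply the rescaling bijection $\iota_{Q'}$ to transport the scale-$\ell$ fan to a scale-$0$ fan in $R_{k-\ell}^n$, verify via Lemma~\ref{idQasZp} that lines, hyperplanes, the cube $Q$, and (for $n>2$) the $2$-plane neighbourhood all transfer correctly, and then invoke Proposition~\ref{HPtest2} or Proposition~\ref{geotest-scale1-n} as the base case. Your write-up is in fact somewhat more careful than the paper's (you explicitly dispose of the case $H\cap Q'=\emptyset$ and spell out why the pairwise-angle-$1$ condition survives), though the parenthetical about $\ell\leq p-2<k$ is a bit muddled and unnecessary for the argument.
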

\begin{proof}
First assume $n>2$. Let $X=\bigcup_{i=0}^p(L_i\cap Q')\setminus Q$ be a fan as in Definition~\ref{def-fan-general}.
By Lemma~\ref{idQasZp}, $\iota_{Q'}(L_0),\dots,\iota_{Q'}(L_p)$ are lines 
so that each pair makes a $p$-adic angle $1$, and $\iota_{Q'}(H)$ is a hyperplane. Moreover,  $\iota_{Q'}(L_0),\dots,\iota_{Q'}(L_p)$ are all contained in $\iota_{Q'}(\Pi)$, and each passes through $\iota(Q)$, a cube in $R_{k-\ell}^n$ on scale $1$.  By Lemma~\ref{idQasZp} (iii) and (iv), $\iota_{Q'}(\Pi)$ is the $p^{-1}$-neighbourhood of a $2$-plane in $R_{k-\ell}^n$. 
We may now apply Proposition~\ref{geotest-scale1-n} to conclude $|\iota_{Q'}(H)\cap \iota_{Q'}(X)|=0\bmod{p}$. Since $\iota_{Q'}$ is a bijection, we have $|H\cap X|=0\bmod{p}$. 

The $n=2$ case is similar, although we apply Proposition~\ref{HPtest2} instead.
\end{proof}


\subsection{Parallel lines} \label{sec-parallel lines}

Any hyperplane $H$ in $R^n$ has the following property. Let $Q$ is a cube on some scale $\ell$, and let $L,L'$ be two parallel lines in $R^n$, both passing through $Q$. Then 
$$|L\cap Q\cap  H|\equiv |L'\cap Q\cap  H|\mod  p.
$$
We prove in Proposition \ref{parallel-lines} that a similar property holds for phi functions.

\begin{proposition}\label{parallel-lines}
Let $Q$ be a cube on scale $\ell$ for some $0\leq \ell\leq k-1$. Let $L,L'$ be two lines in $R^n$ in the direction of the same vector $b\in \mathbb{P}R^{n-1}$, and assume that both $L$ and $L'$ pass through $Q$. Then for any function $f\in\Omega_{p^k-1}^n$ we have
$$
\langle \one_{L\cap Q},f\rangle \equiv \langle \one_{L'\cap Q},f\rangle \mod p.
$$

\end{proposition}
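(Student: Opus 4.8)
The plan is to reduce the statement to a calculation on a single cube and then use the discrete-derivative machinery developed in Sections~\ref{discrete-derivatives} and \ref{sec-multidim-phi}. First I would use the map $\iota_Q:Q\rightarrow R_{k-\ell}^n$ from Lemma~\ref{idQasZp} to transport everything to $R_{k-\ell}^n$. Under $\iota_Q$, the lines $L\cap Q$ and $L'\cap Q$ become two parallel lines in $R_{k-\ell}^n$ in the direction $\pi_{k-\ell}(b)$, and the function $f$, being a phi-polynomial of degree at most $p^k-1$, restricts (via precomposition with $\iota_Q^{-1}$) to a function on $R_{k-\ell}^n$ whose degree is at most $p^{k-\ell}-1$; here I would note that restricting $f$ to a cube on scale $\ell$ and rescaling corresponds, at the level of phi functions, to keeping only the $p$-adic digits $x_\ell,\dots,x_{k-1}$, which by Lemma~\ref{digits-lemma}~(i) (applied in each variable) lands in $\Omega_{p^{k-\ell}-1}^n$ after the change of variables. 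So it suffices to prove the statement in the case $\ell=0$, $Q=R^n$: for $f\in\Omega_{p^k-1}^n$ and two parallel lines $L=L_b(a)$, $L'=L_b(a')$, one has $\sum_{t\in R}f(a+tb)\equiv\sum_{t\in R}f(a'+tb)\bmod p$.

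For this reduced statement I would set $g(t):=\sum_{t\in R}f(a+tb)$ — more precisely, define the single-variable function $F_a(t)=f(a+tb)$ on $R$, and show $\sum_{t\in R}F_a(t)$ is independent of $a$ modulo the line direction. The key point is that $F_a$ is itself a phi-polynomial of degree at most $p^k-1$ in $t$: indeed $f\in\Omega_{p^k-1}^n$ means $f$ is a linear combination of $\phi_\alpha$ with $|\alpha|\le p^k-1$, and by Vandermonde's identity~(\ref{phi-e1}) together with Lemma~\ref{bxformula}, each $\phi_{\alpha_i}(a_i+tb_i)$ expands into phi functions of $t$ of degree at most $\alpha_i$, so $F_a\in\Omega_{p^k-1}^1$. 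Now I would invoke the elementary fact that for any $h\in\Omega_{p^k-1}^1$, the sum $\sum_{t\in R}h(t)$ equals (minus) the coefficient of $\phi_{p^k-1}$ in the expansion of $h$, up to a universal nonzero constant in $\ZZ/p\ZZ$ — this is because $\sum_{t\in R}\phi_m(t)=\sum_{t=0}^{p^k-1}\binom{t}{m}=\binom{p^k}{m+1}\bmod p$ by the hockey stick identity, which by Lucas's Theorem vanishes for $m<p^k-1$ and equals $\binom{p^k}{p^k}=1$ for $m=p^k-1$. Hence $\sum_{t\in R}F_a(t)$ equals the coefficient of $\phi_{p^k-1}$ in $F_a$, which by Lemma~\ref{deriv-at-0} is $D^{p^k-1}F_a(0)$.

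It then remains to show $D^{p^k-1}F_a(0)$ is independent of $a$. Since $D^{p^k-1}$ applied to $F_a$ and to $F_{a'}$ gives constants (both functions have degree at most $p^k-1$, so their $(p^k-1)$-st derivatives are constant functions), it suffices to show these two constants agree. Writing $a'=a+w$ for some $w$, I would observe that $F_{a'}(t)-F_a(t)=f(a+w+tb)-f(a+tb)=(\Delta_w f)(a+tb)$ as a function of $t$, where $\Delta_w$ denotes the difference operator on $R^n$; by Lemma~\ref{lemma:phidiff} (iterated, or directly) $\Delta_w f\in\Omega_{p^k-2}^n$, so by the same expansion argument the function $t\mapsto(\Delta_w f)(a+tb)$ lies in $\Omega_{p^k-2}^1$, and therefore its $(p^k-1)$-st discrete derivative vanishes by Lemma~\ref{lemma:Dm0}. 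Hence $D^{p^k-1}F_{a'}(0)=D^{p^k-1}F_a(0)$, completing the argument.

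The main obstacle I anticipate is the bookkeeping in the reduction step: making precise that precomposition with $\iota_Q^{-1}$ sends $\Omega_{p^k-1}^n$ into (an affine shift of) $\Omega_{p^{k-\ell}-1}^n$ on $R_{k-\ell}^n$. This requires combining Lemma~\ref{digits-lemma}~(i), Lemma~\ref{phi-higher-scales}, and the fact that $\iota_Q$ is an affine map of the digit coordinates, and one must be careful that degree is measured correctly after the rescaling $x=x'+p^\ell x''\mapsto x''$. An alternative that sidesteps the subtlety is to avoid the reduction entirely and work directly: note that $\one_{L\cap Q}$ can be written as $\one_{L\cap Q}(x)=\one_Q(x)\cdot\one_{\{x\in L\}}$, but $\one_Q$ itself lies in $\Omega_{p^k-1}^n$ (it depends only on the first $\ell$ digits), so one would instead need the orthogonality statement to be stable under multiplication by such indicators, which is essentially the same content. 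I expect the cleanest writeup is the reduction to $\ell=0$ followed by the hockey-stick/derivative argument above.
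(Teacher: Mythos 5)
Your proof is correct and, after the common reduction to the case $\ell=0$ ($Q=R^n$), takes a genuinely different route from the paper's. The paper's proof next reduces to lines in the direction $e_1$ via an invertible linear change of variables $U$, and then computes $\langle \one_L,\phi_\alpha\rangle$ explicitly: for a line $\{(y,z):y\in R\}$ this pairing factors through the $p$-adic digits of $y$, and each digit sum $\sum_{y_j=0}^{p-1}\phi_{\beta_j}(y_j)$ is killed by telescoping unless $\beta_j=p-1$, forcing $\beta=p^k-1$ and $\gamma=0$. You instead restrict $f$ along the line directly to get $F_a\in\Omega_{p^k-1}^1$, observe via the hockey-stick identity and Lucas that $\sum_{t\in R}\phi_m(t)=\binom{p^k}{m+1}\bmod p$ is $1$ exactly when $m=p^k-1$ and $0$ otherwise, so the line-sum picks out the top coefficient of $F_a$; you then show this coefficient is invariant under $a\mapsto a'$ because $F_{a'}-F_a=(\Delta_w f)(a+t b)$ has degree $\leq p^k-2$, so its $(p^k-1)$-st derivative vanishes. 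This avoids the change-of-variables step and uses the degree-lowering and derivative-at-zero machinery in a clean way; the paper's telescoping is essentially a digit-level instance of the same hockey-stick phenomenon. One minor citation note: to get $\phi_j(tb_i)\in\Omega_j^1$ for possibly non-invertible $b_i$ you should cite Corollary~\ref{a-expansion} rather than Lemma~\ref{bxformula} (which assumes $b\in R^\times$), and you implicitly use Lemma~\ref{cor:phiprod} to bound the degree of the product over coordinates; neither of these affects correctness.
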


\begin{proof}
We prove the proposition under the assumption that $Q=R^n$.
The general case can be deduced from this by rescaling as in the proof of 
Proposition \ref{geo-test-for-H}. The details are left to the interested reader.

We first claim that it suffices to consider the case when $L,L'$ are lines in the direction of $e_1=(1,0,\dots,0)$. Indeed, let
$b\in \mathbb{P}R^{n-1}$ be the common direction vector for $L$ and $L'$. Without loss of generality, we may assume that $b_1\in R^\times$.
Define a linear mapping $U:R^n\to R^n$ by saying that $U(e_1)=b$ and (with the obvious notation) $U(e_j)=e_j$ for $2\leq j\leq n$. In the basis $e_1,\dots,e_n$, $U$ is represented by the matrix
$$
\begin{pmatrix}
b_1& 0 & 0 &\cdots &0 \\
b_2 & 1 &  0 & \cdots & 0 \\
b_3  & 0 & 1 &\cdots & 0 \\
\vdots \\
b_n & 0 & 0 &\cdots & 1
\end{pmatrix}
$$
Since the determinant of this matrix is $b_1\in R^\times$, $U$ is invertible. Moreover, $U^{-1}$ maps lines in the direction of $b$ to lines in the direction of $e_1$. By iterated applications of (\ref{phi-e1}) and Lemma \ref{bxformula}, $f(x)$ and $f(Ux)$ have the same degree. This proves the claim.

It therefore suffices to prove the following: if $L,L'$ are lines in the direction of $e_1$, then for any $\alpha$ with $|\alpha|\leq p^k-1$ we have 
\begin{equation}\label{LL'}
\langle \one_{L},\phi_\alpha \rangle \equiv \langle \one_{L'},\phi_\alpha\rangle \mod p.
\end{equation}
Let $L$ be the line $\{(y,z):\ y\in R\}$ for some $z\in R^{n-1}$. Let also $\alpha=(\beta,\gamma)$ with $\beta\in [p^k]$ and $\gamma\in[p^k]^{n-1}$. Then
\begin{align*}
\langle \one_{L},\phi_\alpha \rangle &= \sum_{y\in R} \phi_\beta(y)\phi_\gamma(z)
\\
&= \phi_\gamma(z) \sum_{y\in R} \prod_{j=0}^{k-1} \phi_{\beta_j}(y_j)
\\
&= \phi_\gamma(z) \prod_{j=0}^{k-1} \left(\sum_{y_j=0}^{p-1}  \phi_{\beta_j}(y_j)\right),
\end{align*}
where $y=\sum y_jp^j$ and $\beta=\sum \beta_jp^j$ are the $p$-adic expansions of $y$ and $\beta$.

If $0\leq \beta_j<p-1$ for some $j$, then by (\ref{eq:phirec}),
\begin{equation}\label{LL'2}
\sum_{y_j=0}^{p-1}  \phi_{\beta_j}(y_j)
= \sum_{y_j=0}^{p-1}  \left( \phi_{\beta_j+1}(y_j+1) -\phi_{\beta_j+1}(y_j)\right)
=0.
\end{equation}

If both of the expressions $\langle \one_{L},\phi_\alpha \rangle $ and $ \langle \one_{L'},\phi_\alpha\rangle$ are zero mod $p$, then (\ref{LL'}) is clearly true. 
On the other hand, if either expression is nonzero mod $p$, it follows from (\ref{LL'2}) that $\beta_j=p-1$ for all $j$. But then $\beta=p^k-1$. Since $\beta+|\gamma|=|\alpha|\leq p^k-1$, it follows that $|\gamma|=0$, so that $\phi_\gamma(z)=1$. But then $\phi_\alpha$ is the characteristic function of the hyperplane $x_1=p^k-1$, and (\ref{LL'}) is again true with both sides equal to 1. This proves the proposition.

 \end{proof}


\section{Acknowledgement}

The first author was supported by NSERC Discovery Grant 22R80520.
The second author was supported by NSERC Discovery Grants 22R80520 and GR010263.

\bibliographystyle{amsplain}

\vfill

\noindent{\sc Department of Mathematics, UBC, Vancouver,
B.C. V6T 1Z2, Canada}

\smallskip

\noindent{\it  ilaba@math.ubc.ca, ctrainor@math.ubc.ca}


\end{document}